% Dr. Marvin Sebastian Mueller
% ETH Zurich
% marvin.mueller@math.ethz.ch
% AAP1359
%
\documentclass[aap, preprint]{imsart}
\RequirePackage[OT1]{fontenc}
\RequirePackage{amsthm,amsmath}
\RequirePackage[numbers]{natbib}
\RequirePackage[colorlinks,citecolor=blue,urlcolor=blue]{hyperref}

% settings
\pubyear{2018}
\volume{28}
\issue{4}
\firstpage{2335}
\lastpage{2369}
%\setattribute{journal}{name}{Original version published in the Annals of Applied Probability}
\doi{10.1214/17-AAP1359}
%\arxiv{arXiv:1601.03968}

%AAP1359

%%%
\usepackage[english]{babel} % language
\selectlanguage{english}             
\usepackage[utf8]{inputenc}
\usepackage[OT1]{fontenc}

\usepackage{amsmath,amsfonts,amssymb,amsthm}
%%%
\usepackage{mathrsfs}
\usepackage{xfrac}
\usepackage{enumitem}         % better lists
\setlist[itemize] {label=$\pmb\triangleright$}
\usepackage{stmaryrd} % stochastic intervals 

%%%
\usepackage{color}
\definecolor{darkgreen}{rgb}{0,0.55,0}
\usepackage[colorlinks,citecolor=blue,urlcolor=blue]{hyperref}

\startlocaldefs

\AtBeginDocument{%
   \def\MR#1{}
}

% ----------------------------------------------------------------
\vfuzz2pt % Don't report over-full v-boxes if over-edge is small
\hfuzz2pt % Don't report over-full h-boxes if over-edge is small

% THEOREMS -------------------------------------------------------
\newtheorem{thm}{Theorem}[section]

\newtheorem{cor}[thm]{Corollary}
\newtheorem{lem}[thm]{Lemma}

\theoremstyle{definition}
\newtheorem{defn}[thm]{Definition}

\newtheorem{ass}[thm]{Assumption}
\theoremstyle{remark}

\newtheorem{rmk}[thm]{Remark}
\newtheorem{example}[thm]{Example}

\numberwithin{equation}{section}

% Standard math
\renewcommand\*{\cdot}
\newcommand\1{\mathbf{1}}

% sets
\newcommand\R{\mathbb{R}}

\newcommand\N{\mathbb{N}}

% norms and skps
\providecommand{\setc}[2]{\left\{ #1 \cond #2\right\}}
\providecommand{\abs}[1]{\left\lvert#1\right\rvert}
\providecommand{\norm}[2]{\left\lVert#1\right\rVert_{#2}}
\newcommand{\scal}[3]{\left\langle #1, #2\right\rangle_{#3}}

% different real and imaginary part

% Blackbord Bold Letters

\newcommand{\PP}{\mathbb{P}}

\newcommand{\EE}{\mathbb{E}}

% Caligraphic Letters
\newcommand{\cU}{\mathcal{U}}
\newcommand{\cA}{\mathcal{A}}
\newcommand{\cB}{\mathcal{B}}

\newcommand{\cC}{\mathcal{C}}
\newcommand{\cW}{\mathcal{W}}

\newcommand{\sD}{\mathscr{D}}

% partial derivatives

\newcommand\ddp{\tfrac{\partial}{\partial p}}

\newcommand\ddx{\tfrac{\partial}{\partial x}}
\newcommand\ddxx{\tfrac{\partial^2}{\partial x^2}}
\newcommand\ddy{\tfrac{\partial}{\partial y}}

%% distributional derivatives
\newcommand{\Ddx}{\partial_x}
\newcommand{\Ddxx}{\partial_{xx}}

% Probability theory

% measure-, inttheory and probability
\newcommand\F{\mathcal{F}}

\renewcommand\d{\,\operatorname{d}\hspace{-0.05cm}}

\providecommand\cond{\,\middle\vert\,}
\renewcommand\cond{\,\middle\vert\,}

\renewcommand{\P}[1]{\mathbb{P}\left[#1\right]}                     % probability
                     % probability with subscript
        % conditional probability
\newcommand{\E}[1]{\mathbb{E}\left[#1\right]}                     % expectation
                     % expectation with superscript
        % conditional expectation
  % conditional expectation with superscript

% Stochastic processes

% Semigroups
     % trace
\newcommand\Id{\operatorname{Id}}                               % Identity Operator 
\newcommand\dom{\mathcal{D}}                                      % Domain

% Some spaces
\newcommand\HS{\operatorname{HS}}
\newcommand\DA{\dom(\cA)}

\renewcommand\H{\mathfrak H}
\renewcommand\L{\mathfrak L}

%aux

\newcommand\cI{\mathcal I}

% LOB

 %change of coordinates

\newcommand\sign{\operatorname{sign}}

\newcommand{\bmu}{{\overline \mu}}
\newcommand{\bsigma}{{\overline \sigma}}
\newcommand{\VI}{\operatorname{VI}}
\newcommand{\llbrak}{\llbracket}
\newcommand{\rrbrak}{\rrbracket}

\endlocaldefs

%%%%%%%%%%%%%%%%%%%%%%%%%%%%%%%%%%%%%%%%%%%%%%%%
%%%%%%%%%%%%%%%%%%%%%%%%%%%%%%%%%%%%%%%%%%%%%%%%

\begin{document}

\begin{frontmatter}
\title{A stochastic Stefan-type problem\\ under first-order boundary conditions}
\runtitle{A stochastic Stefan-type problem}

\begin{aug}
  \author{\fnms{Marvin S.} \snm{M\"uller}\thanksref{t1,t2}\ead[label=e1]{marvin.mueller@math.ethz.ch}}% ,

  \thankstext{t1}{Supported by the Swiss National Science Foundation through grant SNF $205121\_163425$.}
  \thankstext{t2}{Most of this work was carried out within the scope of the author's
    dissertation~\cite{diss} at TU Dresden with funding from the German Research Foundation (DFG) under grant ZUK 64.}

  \runauthor{M. S. M\"uller}
  
  \affiliation{ETH Z\"urich}
  
  \address{ETH Z\"urich\\
    Departement of Mathematics\\
    R\"amistrasse 101\\
    8092 Z\"urich\\
    Switzerland\\
    \printead{e1}\\
    % % \phantom{E-mail:\ }\printead*{e1}
  }
\end{aug}

\begin{abstract}
 \, Moving boundary problems allow to model systems with phase transition at an inner boundary. Motivated by problems in economics and finance, we set up a price-time continuous model for the limit order book  and consider a stochastic and non-linear extension of the classical Stefan-problem in one space dimension. Here, the paths of the moving interface might have unbounded variation, which introduces additional challenges in the analysis. Working on the distribution space, It\={o}-Wentzell formula for SPDEs allows to transform these moving boundary problems into partial differential equations on fixed domains. Rewriting the equations into the framework of stochastic evolution equations and stochastic maximal $L^p$-regularity we get existence, uniqueness and regularity of local solutions. Moreover, we observe that explosion might take place due to the boundary interaction even when the coefficients of the original problem have linear growths.
\end{abstract}

\begin{keyword}[class=MSC]
\kwd[Primary ]{60H15}
\kwd[; secondary ]{91B70}
\kwd{91G80}
\end{keyword}

\begin{keyword}
\kwd{Stochastic partial differential equations}
\kwd{Stefan problem}
\kwd{moving boundary problem}
\kwd{limit order book}
\end{keyword}

\end{frontmatter}

Multi-phase systems of partial differential equations have a long history in applications to various fields in natural science and quantitative finance. Recent developments in modeling of demand, supply and price formation in financial markets with high trading frequencies ask for a mathematically rigorous framework for moving boundary problems with stochastic forcing terms. Motivated by this application, we consider a class of semilinear two-phase systems in one space dimension with first order boundary conditions at the inner interface. 

While the deterministic problems have been extensively studied in the
second half of the past century - see
e.\,g. \cite{lunardiMovingBoundary} and references therein - the stochastic equations are much less understood. In the past decade, several authors have started to study stochastic extensions of the classical Stefan problem. In 1888, Josef Stefan introduced this problem as a model for heat diffusion in the polar sea~\cite{stefanEis}. However, to the best of the author's knowledge so far the stochastic perturbations have been limited to the systems behaviour inside the respective phases or, in the context of Cahn-Hilliard equations~\cite{bloemkerCH}, on the boundary. As a step towards more realistic models, we also extend the Stefan-type dynamics of the free interface by Brownian noise which introduces additional challenges in the analysis. 

Barbu and da Prato~\cite{BarbuDaPratoStefan} used the so called enthalpy function in the setting of the classical Stefan problem with additive noise to transform the free boundary problem into a stochastic evolution equation of porous media type. In a series of papers, Kim, C.\,Mueller, Sowers and Zheng \cite{sowersEtAl2, sowersEtAl, zhengPhD} studied a class of linear stochastic moving boundary problems in one space dimension. After a coordinate transformation, the resulting SPDEs have been solved directly using heat kernel estimates. Extending these results, Keller-Ressel and M.\,S.\,M\"uller~\cite{SFBP} used classical estimates from interpolation theory to established a notion of strong solutions for stochastic moving boundary problems. The framework for existence, regularity and further analysis of the solution is based on the theory of mild and strong solutions of stochastic evolution equations in the sense of~\cite{dPZinf}. The change of coordinates was made rigorous by imposing a stochastic chain rule. Unfortunately, this way is no longer accessible when the path of the moving interface has unbounded variation or when the solution itself has a discontinuity at the inner boundary. Instead, we will switch into the space of generalized functions and use It\={o}-Wentzell formula for SPDEs to perform the transformation. It turns out that the terms describing the evolution of the density close to the boundary are distribution-valued, which gives the need for an extension of the concepts of solutions. The setting for analysis of the centered problems will still be based on semigroup theory for stochastic evolution equations, but can be located at the borderline case for existence.

Recently, various systems based on both stochastic and deterministic parabolic partial differential equations have been applied in finance as dynamical models for demand, supply and price formation, see e.\,g. \cite{bhqFunctional, bouchaudSMBP, lasrylions, markowichteichmannwolfram, bouchaudReactionDiffusion} which is just a short list and far away from being complete. For modern financial markets with high trading frequencies, we introduce a class of continuous models for the limit order book density with infinitesimal tick size, where the evolution of buy and sell side is described by a semilinear second-order SPDE and the mid price process defines a free boundary separating buy and sell side. Based on empirical observations~\cite{contImbalance, liptonImbalance}, we assume that average price changes can are determined by the bid-ask imbalance. Extending the models presented in~\cite{zhengPhD, SFBP} we allow the price process to have unbounded variation.

The paper is structured in the following way. In the first section, we introduce the moving boundary problem and the centered SPDE, define the notions of solutions and present the results on existence and regularity of local solutions and characterize its explosion times. Equations of this type arise, for instance, in macroscopical descriptions of demand and supply evolution in nowadays financial markets. In Section~\ref{sec:LOB}, we set up a dynamic model for the density of the so called limit order book. We then switch back to the analysis and solve the centered equation using existence theory for stochastic evolution equations in the framework of stochastic maximal $L^p$-regularity in Section~\ref{sec:centered}. This theory was studied in detail on a general class of Banach spaces by Weis, Veraar and van Neerven~\cite{weisMaxRegEvEq, weisMaxReg}. The required results, adapted to the Hilbert space setting, are sketched in the appendix. In Section~\ref{sec:trafo}, we switch to Krylov's framework of solutions of SPDEs in the sense of distributions, see \cite{krylov6, krylovItoWentzell}, and translate the existence and regularity results for the equations on the moving frames. In Section~\ref{sec:motivation}, we present heuristically a toy example which illustrates our notion of solutions for stochastic moving boundary problems.\\

Without further mentioning, we work on $(\Omega,\F, (\F_t),\PP)$, a
filtered probability space with the usual conditions. For a stopping
time $\tau$ we denote the closed stochastic interval by $\llbrak
0,\tau \rrbrak := \{(t,\omega)\in [0,\infty)\times \Omega\,\vert\,
t\leq \tau(\omega)\}$. Respectively, we define $\llbrak
0,\tau\llbrak$, $\rrbrak 0,\tau\llbrak$ and $\rrbrak
0,\tau\rrbrak$. For stochastic processes $X$ and $Y$ we say $X(t) =
Y(t)$ on $\llbrak 0,\tau\llbrak$, if equality holds for almost all
$\omega \in \Omega$ and all $t\geq 0$ such that $(t,\omega)\in \llbrak
0,\tau\llbrak$. Given Hilbert spaces $E$ and $H$, we write
$E\hookrightarrow H$ when $E$ is continuously and densely embedded
into $H$. As usual, we denote by $L^q$ the Lebesgue space, $q\geq 1$,
and with $H^s$, $s>0$, the Sobolev spaces of order $s>0$. Moreover,
$\ell^2(E)$ is the space of $E$-valued square summable sequences and
$\HS(U;E)$, for separable Hilbert spaces $U$ and $E$, is the space of
Hilbert-Schmidt operators from $U$ into $E$. The scalar product on $E$
will be denoted by $\scal{.}{.}{E}$. When working on the distribution
space $\sD$, we denote the dualization by $\scal{.}{.}{}$. We will
work only with real separable Hilbert spaces and implicitely use their
complexification when necessary to apply results from the
literature. We typically denote positive constants by $K$ which might
change from line to line and might have subindices to indicate
dependencies.

\section{A Stochastic Moving Boundary Problem}
\label{sec:spde}
We consider the stochastic moving boundary problem in one space dimension
\begin{gather}
  \begin{split}
    \d v(t,x) &= \left[\eta_+ \ddxx v +  \mu_+\left(x-p_*(t),p_*(t), v, \ddx v \right) \right] \d t\\
    & \qquad \qquad\qquad\qquad+ \sigma_+\left(x-p_*(t), p_*(t),  v\right)\d \xi_t (x), \quad  x > p_*(t),\\
    \d v(t,x) &= \left[\eta_- \ddxx v +\mu_-\left(x-p_*(t),p_*(t),  v, \ddx v \right) \right] \d t\\
    &\qquad\qquad\qquad\qquad + \sigma_-\left(x-p_*(t), p_*(t),  v\right) \d \xi_t(x), \quad  x < p_*(t),\\
    \d p_*(t) &= \varrho\Big(v(t,p_*(t)+), v(t,p_*(t)-)\Big)\d t + \sigma_* \d B_t,
  \end{split}\label{eq:smbp2}
\end{gather}
with Robin boundary conditions
\begin{gather}
  \begin{split}
    \ddp v(t,p_*(t)+) &= \kappa_+ v(t,p_*(t)+),\\
    \ddp v(t,p_*(t)-) &= -\kappa_-  v(t,p_*(t)-),
  \end{split}\label{eq:bc}
\end{gather}
for $t> 0$, $\mu_\pm: \R^4\rightarrow \R$, $\sigma_\pm: \R^3\rightarrow \R$, $\eta_\pm>0$, $\sigma_*\geq 0$, and $\kappa_+$, $\kappa_-\in [0,\infty)$. 
On $(\Omega,\F,(\F_t),\PP)$, $B$ is a real Brownian motion and $\xi$ the spatially colored noise,
\begin{equation}
  \label{eq:colorednoise}
  \xi_t(x) :=  T_\zeta W_t (x),\qquad T_\zeta w(x) := \int_\R \zeta(x,y) w(y) \d y,\quad x\in \R,\,t\geq 0,
\end{equation}
where $W$ is a cylindrical Wiener process $W$ on $U:=L^2(\R)$ with covariance operator $\Id$, independent of $B$, and $\zeta:\R^2\to\R$ an integral kernel. It was shown in~\cite{SFBP}, that for $\sigma_* = 0$, and $\kappa_- = \kappa_+ = \infty$, one can shift the equation onto the fixed domain $\dot \R:= \R \setminus\{0\}$, and there exists a strong solution of the integral equation corresponding to~\eqref{eq:smbp2}. Following this procedure at least informally, we get for $u(t,x):= v(t,x+p_*(t))$, $x\neq 0$, $t>0$,
\begin{gather}
  \begin{split}
    \d u(t,x) &= \left[(\eta_++\tfrac12\sigma_*^2) \ddxx u +  \mu_+\left(x,p_*(t), u, \ddx u \right)\vphantom{ \varrho(u(t,0+),u(t,0-))\ddx u(t,x)}\right.  \\
    &\qquad\qquad\qquad \left.+\vphantom{(\eta_++\tfrac12\sigma_*^2) \ddxx u +  \mu_+\left(x,p_*(t), u, \ddx u \right)+}  \varrho(u(t,0+),u(t,0-))\ddx u(t,x)\right] \d t\\
    & \qquad + \sigma_+\left(x, p_*(t),  u)\right)\d \xi_t (x+p_*(t)) + \sigma_* \ddx u(t,x) \d B_t, \quad  x > 0,\\
    \d u(t,x) &= \left[(\eta_-+\tfrac12 \sigma_*^2) \ddxx u +\mu_-\left(x, p_*(t),  u, \ddx u \right) \vphantom{ + \varrho(u(t,0+),u(t,0-))\ddx u(t,x)} \right.\\
    &\qquad\qquad\qquad + \left.\vphantom{(\eta_-+\tfrac12 \sigma_*^2) \ddxx u +\mu_-\left(x, p_*(t),  u, \ddx u \right)+} \varrho(u(t,0+),u(t,0-))\ddx u(t,x)\right] \d t\\
    &\qquad + \sigma_-\left(x, p_*(t),  u\right) \d \xi_t(x+p_*(t))+ \sigma_* \ddx u(t,x) \d B_t, \quad  x < 0,\\
    \d p_*(t) &= \varrho\Big(u(t,0+), u(t,0-)\Big)\d t + \sigma_* \d B_t,
  \end{split}\label{eq:spde}
\end{gather}
with boundary conditions
\begin{equation}
  \label{eq:bcc}
  \ddx u(t,0+) = \kappa_+ u(t,0+),\qquad       \ddx u(t,0-) = -\kappa_- u(t,0-).
\end{equation}
Problem~\eqref{eq:spde} admits
several features worth mentioning.
\begin{itemize}
\item Even when $\mu_+ = \mu_- = 0$, and $\sigma_+$, $\sigma_-$ are linear in $u$, the centered problem has a non-linearity, which is non-local in space and involving the first order derivative.
\item Due to the additional second order term the transformation seems to increase regularity and the equation is parabolic even for $\eta_+= \eta_- = 0$ as long as $\sigma_*>0$. 
\item When $\sigma_*>0$, the first derivative appears in the noise
  term. Recall that the Brownian noise scales differently from time
  and the equation is the borderline case where we could hope to get
  existence. In particular, even linear equations with gradient in the
  noise term can run out of parabolicity, see
  Remark~\ref{rmk:maximaility} and \cite{veraarPara}. Moreover, it
  seems that existence for~\eqref{eq:spde} cannot be shown in the
  present framework, when presuming Dirichlet boundary conditions at
  $x=0$, and replacing $u$ by $\ddx u$ in the dynamics of $p_*$.
\end{itemize}
We emphasize that the strong transformation procedure used
in~\cite{SFBP} does not work in this case, even if $\sigma_* =
0$. However, the behaviour of the solutions should be quite similar
when we restrict to a region away from the free interface $p_*$. The
only problem appearing is due to the discontinuity of $v$ at $p_*$. We
work around this problem by switching to the space of generalized
functions where we can apply Krylov's version of Ito-Wentzell
formula~\cite{krylovItoWentzell}. On this way, we obtain a description
of the evolution of $v$ around $p_*$, which will be part of
Definition~\ref{df:fbp}. We now focus on the centered
problem~\eqref{eq:spde}. Recall that $(u,p_*,\tau)$ is called local \emph{strong} solution
of~\eqref{eq:spde}, if $(u,p_*)$ is an $L^2(\R)\times \R$-predictable
stochastic process and $\tau$ a predictable stopping time such
that \eqref{eq:spde} holds true on $\llbrak 0,\tau\ \llbrak$, in the
sense of an $L^2(\R) \oplus \R$-integral equation, and~\eqref{eq:bcc} holds true $\d t\otimes \d
\PP$-almost everywhere. In particular, all the (stochastc) integrals
are assumed to exist on $L^2(\R)$ and $\R$, respectively. A solution
is called maximal, if there exists no solution on a stricly larger
stochastic interval. See also Definition~\ref{df:strongSEE} and
Section~\ref{sec:centered} for a more detailed formulation.

\begin{ass}
  \label{a:rho}
  $\varrho: \R^2\to \R$ is locally Lipschitz continuous.
\end{ass}
\begin{ass}
  $\mu_+$, $\mu_- : \R^4 \to \R$ fulfill \ref{ai:mugrowths},
  \ref{ai:mulipu} and \ref{ai:mulipp}.
  \label{a:mu}
  \begin{enumerate}[label=(\roman*)]
  \item\label{ai:mugrowths} There exist $a \in L^2(\R)$, $b \in L^\infty_{loc}(\R^2; \R)$ such that for all $x, y, z\in \R$
    \[ \abs{\mu_+ (x,p,y,z)}+ \abs{\mu_- (x,p,y,z)} \leq  b(y,p)\left(a(x) + \abs{y} + \abs{z}\right).\]
  \item\label{ai:mulipu} For all $R>0$ exist $L_R>0$ and $a_R\in L^2(\R)$ such that for all $x$, $y$, $\tilde y$, $z$, $\tilde z$, $p\in \R$, with $\abs{y}$, $\abs{\tilde y}$, $\abs{p}\leq R$, it holds that  
    \begin{multline*}
      \;\qquad\abs{\mu_\pm(x,p,y,z) - \mu_\pm(x,p,\tilde y,\tilde z)} \leq \\\leq L_R  (a_R(x) + \abs{z}+\abs{\tilde z})\abs{y-\tilde y} + L_R\abs{z-\tilde z}.      
    \end{multline*}
  \item\label{ai:mulipp} For all $R>0$ exist  $a_R\in L^2(\R)$ and $b_R > 0$, such that for all $x$, $y$, $z$, $\tilde z$, $p$, $\tilde p\in \R$, with $\abs{y}$, $\abs{p}$, $\abs{\tilde p}\leq R$, it holds that
    \begin{equation*}
      \abs{\mu_\pm (x,p,y,z)- \mu_{\pm}(x,\tilde p,y,z)} \leq \left( a_R(x) + b_R \left(\abs{y}+\abs{z}\right)\right) \abs{p-\tilde p}.      
    \end{equation*}
  \end{enumerate}
\end{ass}
\begin{ass}
  \label{a:sigma}
  Let $\sigma_* \geq 0$ and for all $p\in \R$, $\sigma_+(.,p,.)$, $\sigma_-(.,p,.)\in C^{1}(\R^2;\R)$. Moreover,
  \begin{enumerate}[label=(\roman*)]
  \item\label{ai:sigmagrowths} There exist $a\in L^2(\R_+)$ and $b$, $\tilde b \in L^\infty_{loc}(\R^2; \R_+)$ such that
    \[  \abs{ \sigma(x,p,y)} + \abs{\ddx \sigma(x,p,y)} \leq   b(y,p) \left(a(x) +\abs{y}\right).\]
    and
    \[ \abs{\ddy\sigma(x,p,y)} \leq \tilde b(y,p).\]
  \item\label{ai:sigmalipu} For all $R\in \N$ exists $L_R>0$ such that for all $x$, $y$, $\tilde y$, $p\in \R$, with $\abs{y}$, $\abs{\tilde y}$, $\abs{p}\leq R$,
    \begin{align*}
      \abs{\sigma_\pm(x,p,y) - \sigma_\pm(x,p,\tilde y)} &\leq  L_R \abs{y-\tilde y},\\
      \abs{\ddx \sigma_\pm(x,p,y) - \ddx \sigma_\pm(x,p,\tilde y)} &\leq  L_R \abs{y-\tilde y},\\
      \abs{\ddy \sigma_\pm(x,p,y) - \ddy \sigma_\pm(x,p,\tilde y)} &\leq  L_R \abs{y-\tilde y}.
    \end{align*}
  \item\label{ai:sigmalipp} For all $R\in \N$ exists an $a_R\in L^2(\R)$ such that for all $x$, $y$, $\tilde y$, $p\in \R$, with $\abs{y}$, $\abs{\tilde y}$, $\abs{p}\leq R$, it holds for $i\in \{0,1\}$,
    \[\abs{\tfrac{\partial^{(i)}}{\partial x^{i}}\sigma_\pm(x,p,y) - \tfrac{\partial^{(i)}}{\partial x^{i}}\sigma_\pm(x, \tilde p,y)} \leq b_R(a_R(x) + \abs{y})\abs{p-\tilde p},\]
    and
    \[\abs{\ddy \sigma_\pm(x,p,y) - \ddy \sigma_\pm(x,\tilde p,y)} \leq \tilde b_R \abs{p-\tilde p}.\]
  \end{enumerate}
\end{ass}
\begin{ass}
  \label{a:zeta}
  $\zeta(.,y) \in C^2(\R)$ for all $y\in \R$ and $\tfrac{\partial^{i}}{\partial x^i}\zeta(x,.)\in L^2(\R)$ for all $x\in \R$, $i\in\{0,1,2\}$. Moreover, 
  \begin{equation}
    \sup_{x\in \R} \norm{\tfrac{\partial^{i}}{\partial x^i} \zeta(x,.)}{L^2(\R)} <\infty,\quad i\in\{0,1,2\}.\label{eq:Azeta}
  \end{equation}
\end{ass}

\begin{thm}\label{thm:euspde}
  Let Assumptions~\ref{a:rho}, \ref{a:mu}, \ref{a:sigma} and~\ref{a:zeta} hold true. Then, for all $\F_0$-measurable initial data $(u_0,p_0) \in H^1(\dot \R)\times \R$, there exists a unique maximal strong solution $(u,p_*,\tau)$ of~\eqref{eq:spde} with paths almost surely in $L^2([0,\tau); H^2(\dot \R))\cap C([0,\tau); H^1(\dot \R))$. 
\end{thm}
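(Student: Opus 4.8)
The plan is to recast the centered system~\eqref{eq:spde}--\eqref{eq:bcc} as an abstract semilinear stochastic evolution equation for the pair $(u,p_*)$ on the Hilbert space $H:=L^2(\dR)$, with state space $H^1(\dR)\times\R$, and to solve it by the abstract local well-posedness result for semilinear stochastic evolution equations under \emph{stochastic maximal $L^2$-regularity} (see the appendix and \cite{weisMaxRegEvEq, weisMaxReg}), combined with a localization argument to pass from globally Lipschitz truncations to the maximal solution.

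First I would fix the linear part. Since $\dR=\R_{>0}\cup\R_{<0}$ and both the second order term and the Robin conditions~\eqref{eq:bcc} decouple the two half-lines, let $A=A_+\oplus A_-$ on $H$ act as $(\eta_\pm+\tfrac12\sigma_*^2)\partial_{xx}$ on $D(A):=\{w\in H^2(\dR):\partial_x w(0+)=\kappa_+ w(0+),\ \partial_x w(0-)=-\kappa_- w(0-)\}$. After a shift by a large constant, $-A$ is self-adjoint and positive definite on $H$ (the Robin boundary terms being absorbed by the half-line trace inequality), hence it has a bounded $H^\infty$-calculus, and the $p=2$ trace space of the pair $(H,D(A))$ is $H^1(\dR)$, with no boundary condition. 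Writing the $B_t$-noise coefficient as the closed operator $Bw:=\sigma_*\partial_x w$, the decisive point is that $(A,B)$ satisfies the stochastic parabolicity condition underlying stochastic maximal $L^2$-regularity, which reduces to coercivity of $A-\tfrac12 B^2=\eta_\pm\partial_{xx}$; this coercivity is \emph{strict} precisely because $\eta_\pm>0$. Granting this, the linear equation $\d u=(Au+f)\d t+\Phi\d W_t+(Bu)\d B_t$, $u(0)=u_0$, is, for progressively measurable data $f\in L^2(\Omega;L^2(0,T;H))$ and $\Phi\in L^2(\Omega;L^2(0,T;\HS(U;H^1(\dR))))$, uniquely solvable with $u\in L^2(\Omega;L^2(0,T;D(A)))\cap L^2(\Omega;C([0,T];H^1(\dR)))$ and a corresponding a priori bound.

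Next I would verify the Lipschitz hypotheses of the abstract result for the remaining terms, using Assumptions~\ref{a:rho}--\ref{a:zeta}, the embedding $H^1(\dR)\hookrightarrow L^\infty(\dR)$, and continuity of the traces $w\mapsto w(0\pm)$ on $H^1(\dR)$. The non-local transport term $(u,p_*)\mapsto\varrho(u(0+),u(0-))\partial_x u$ has a locally bounded, locally Lipschitz scalar factor times the lower-order term $\partial_x u$, hence enters the drift $f$ with a small-time gain; $(u,p_*)\mapsto\mu_\pm(\cdot,p_*,u,\partial_x u)$ maps $H^1(\dR)\times\R$ into $H$ locally Lipschitzly by Assumption~\ref{a:mu}; the diffusion coefficient $(u,p_*)\mapsto\big[h\mapsto\sigma_\pm(\cdot,p_*,u)\int_\R\zeta(\cdot+p_*,y)h(y)\d y\big]$ maps into $\HS(U;H^1(\dR))$ locally Lipschitzly, the $x$-derivative being taken by the chain rule and the Hilbert--Schmidt norm controlled by Assumption~\ref{a:sigma} together with the uniform bound~\eqref{eq:Azeta}, which is exactly what makes the bounds uniform in the shift $p_*$ (moreover $\sigma_\pm(\cdot,p_*,u)\zeta(\cdot+p_*,\cdot)$ inherits the extra spatial regularity of $u\in H^2(\dR)$ locally in time, supplying the small-time gain needed for the noise at $p=2$); and the drift of $p_*$, $(u,p_*)\mapsto\varrho(u(0+),u(0-))$, is locally Lipschitz by Assumption~\ref{a:rho}, so $(u,p_*)$ is treated jointly, with $p_*$ entering the coefficients only through their Lipschitz-in-$p$ bounds.

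Finally I would run the localization. For $n\in\N$, replace every coefficient by a globally Lipschitz one agreeing with the original on $\{\|u\|_{H^1(\dR)}+|p_*|\leq n\}$, and the initial datum by $(u_0,p_0)\Ind{\|u_0\|_{H^1(\dR)}+|p_0|\leq n}$, so that the abstract result produces a unique global solution $(u^{(n)},p_*^{(n)})$; setting $\tau_n:=\inf\{t\geq0:\|u^{(n)}(t)\|_{H^1(\dR)}+|p_*^{(n)}(t)|\geq n\}$, uniqueness makes the $(u^{(n)},p_*^{(n)})$ consistent on $\llbrak0,\tau_n\rrbrak$, and since $\|u_0\|_{H^1(\dR)}+|p_0|<\infty$ almost surely, $\tau:=\sup_n\tau_n>0$ almost surely; the common limit is then a strong solution $(u,p_*,\tau)$ with paths in $L^2([0,\tau);H^2(\dR))\cap C([0,\tau);H^1(\dR))$, and maximality together with local uniqueness gives uniqueness of the maximal solution. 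The main obstacle throughout is the verification of stochastic maximal $L^2$-regularity for $(A,B)$: because the gradient enters the \emph{noise} while $p=2$ is forced by the $H^1$-regularity of the initial data, the problem sits exactly at the borderline of parabolicity, and the argument genuinely relies on $\eta_\pm>0$ keeping $A-\tfrac12 B^2$ strictly elliptic — which is also why the first-order term $\varrho\partial_x u$ has to be carried in the drift and not in the noise, cf.\ Remark~\ref{rmk:maximaility} and \cite{veraarPara}.
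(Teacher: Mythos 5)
Your overall strategy coincides with the paper's: rewrite~\eqref{eq:spde} as a semilinear stochastic evolution equation (the paper reflects onto $\L^2=L^2(\R_+)\oplus L^2(\R_+)\oplus\R$, Theorem~\ref{thm:euSEE}, which is cosmetically different from working on $L^2(\dot\R)\oplus\R$ directly), check that the Nemytskii drift, the non-local transport term and the colored-noise coefficient are Lipschitz on bounded sets from $\H^1=E_{1/2}$ into the spaces required by Assumptions~\ref{a:Babst}--\ref{a:Cabst}, and invoke the stochastic maximal $L^2$-regularity result Theorem~\ref{thm:eucSEEa} (which already delivers the maximal solution for locally Lipschitz nonlinearities, so your separate truncation step is redundant but harmless). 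Your verification of the semilinear terms matches Lemmas~\ref{lem:driftlip} and~\ref{lem:noiselip}(i); one parenthetical claim is off, namely that $\sigma_\pm(\cdot,p_*,u)T_\zeta$ needs to ``inherit the extra spatial regularity of $u\in H^2$'' --- it does not: $\cC_1$ only has to map $\H^1$ into $\HS(\cU;\H^1)\hookrightarrow\HS(\cU;E_{1/2})$, and the half-derivative gain is supplied by the stochastic convolution itself.

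The genuine gap is exactly at the point you flag with ``granting this.'' Reducing stochastic parabolicity to coercivity of $A-\tfrac12B^2=\eta_\pm\partial_{xx}$ is the correct criterion at the $L^2$ level, i.e.\ for the Gelfand triple $H^1\subset L^2\subset H^{-1}$, and it would only produce a solution in $L^2(0,\tau;H^1)\cap C([0,\tau);L^2)$. The theorem claims $L^2(0,\tau;H^2)\cap C([0,\tau);H^1)$, so the estimate for the gradient noise has to be established one derivative higher: one must show that $C_2u=\sigma_*\partial_xu$ maps $\dom(\cA)$ into $E_{1/2}=\dom((-\cA)^{1/2})$ with
\[\norm{(-\cA)^{1/2}C_2(u-v)}{\HS(\cU;\L^2)}\le L_*\norm{\cA(u-v)}{\L^2},\qquad L_*=\sigma_*\bigl((\eta_+\wedge\eta_-)+\tfrac12\sigma_*^2\bigr)^{-1/2}<\sqrt2,\]
so that condition~\eqref{eq:para} holds thanks to $M_2^W\le1/\sqrt2$. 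This is the content of Lemma~\ref{lem:KatoOneSided}\eqref{eq:KatoA2}, and it is \emph{not} a formal consequence of ellipticity of $A-\tfrac12B^2$: it requires (a) the compatibility $\partial_x:\dom(\Delta)\to\dom((c-\eta\Delta)^{1/2})$, which holds for Robin conditions because $E_{1/2}=H^1$ carries no boundary condition (cf.~\eqref{eq:Id}) but fails for Dirichlet conditions (Remark~\ref{rmk:MaxRegDirichlet}), and (b) an integration by parts in which the boundary term $\eta\kappa\abs{\partial_xu(0)}^2=\eta\kappa^3\abs{u(0)}^2$ must be absorbed, which forces the choice $c>\eta_\pm\kappa_\pm^2$ appearing in Theorem~\ref{thm:euSEE}. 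Your write-up mentions neither the boundary-compatibility issue nor the quantitative constant against $M_2^W$, and since this is precisely the borderline step on which existence hinges, the proof is incomplete without it.
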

\begin{rmk}\label{rmk:Besov}
  When imposing stronger assumptions on the initial data we would expect also more spatial regularity of the solution. More detailed, when $u_0$ takes almost surely values in the Besov space \(B^{2 - \frac2{q}}_{2,q}(\dot\R)\) for some $q>2$, then, we would expect $u$ to have almost surely paths in
  \[L^q([0,\tau); H^2(\dot\R)) \cap C([0,\tau); B^{2-\frac2{q}}_{2,q}(\dot \R)).\]
  This indeed follows from results on stochastic maximal $L^q$-regularity, see~\cite[Thm 3.5]{weisMaxRegEvEq}, but only in the case when $\eta_+$ and $\eta_-$ are chosen sufficiently large or $\sigma_*$ is sufficiently small. 
  In this case the results of this section also hold true when the locally bounded function $b$ in Assumption~\ref{a:mu}.\ref{ai:mugrowths} is in $L_{loc}^\infty(\R^3)$ and depends on $p$, $y$ and $z$. To this end, one has to choose $q>4$ so that the Besov space above is embedded in $BUC^1(\dot \R)$. In this case, we need to assume that $u_0 \in B^{2- \frac2{q}}_{2,q} (\dot \R)$ and $u_0$ satisfies~\eqref{eq:bc}. Unfortunately, exact bounds on $\eta_+$ and $\eta_-$ or $\sigma_*$ have to be computed in terms of the constants $M_q$, $M_q^W$ in Appendix~\ref{appendix}. See Lemma~\ref{lem:noiselip} and Remark~\ref{rmk:para} for the issue concerning the impact of these constants.
\end{rmk}
Let us now consider some examples which might be of interest in applications. 
\begin{example}
  Let $\zeta(x,y) := \zeta(x+y)$ for $\zeta \in H^2(\R)\cap C^2(\R)$. Assume that $\kappa_+ = \kappa_->0$, $\varrho(x,y) = \rho\*(y-x)$ for some $\rho>0$ and $\sigma(x,p,y) := \sigma y$ for $\sigma \neq 0$. In this case, we can replace $v(t,p_*(t)\pm)$ in~\eqref{eq:smbp2} by the first derivatives and get an extension of the stochastic Stefan(-type) problems considered in \cite{SFBP} and \cite{sowersEtAl}, but with Robin instead of Dirichlet boundary conditions.
\end{example}
\begin{example}\label{ex:burger}
  In the setting of the previous example, let $\mu(x,p,y,z) := y\*z$. Then, the solution $v$ behaves like a stochastic viscous Burger's equation inside of the phases. 
\end{example}
\begin{example}
  With the modifications and limitations mentioned in Remark~\ref{rmk:Besov} one could also cover non-linearities of the form $\mu(x,p,y,z) = z^2$, or any other polynomial in $y$ and $z$.
\end{example}

To handle the randomly moving frames on which the solutions of~\eqref{eq:smbp2} are expected to live, we define the function spaces, for $x\in\R$,
\begin{equation}\label{eq:moving_frame}
  \Gamma^1(x) := \left\{v: \R \to \R\mid  \left.v\right|_{\R\setminus\{x\}} \in H^1(\R\setminus\{x\})\right\},
\end{equation}
and 
\begin{equation}\label{eq:moving_frame}
  \Gamma^2(x) := \left\{v: \R \to \R\mid  \left.v\right|_{\R\setminus\{x\}} \in H^2(\R\setminus\{x\}),\; v(x+.) \text{ satisfies~\eqref{eq:bcc}}\right\}.
\end{equation}
In the following, we denote the first two spatial weak derivatives on $\R\setminus\{x\}$ by $\nabla$ and $\Delta$, respectively. 

By Sobolev embeddings, any $v\in \Gamma^k(x)$ can be identified with an element of $BUC^{k-1}(\R\setminus\{x\})$ and, since $\{x\}$ has mass $0$, also of $L^2(\R)$. For all $v\in \Gamma^1(x)$ it also holds that $\nabla v$ and, if $v\in \Gamma^1(x)$, then $\Delta v$ are elements in $L^2(\R)$, again. 
To shorten the notation, we introduce the functions $\bmu: \R^5\rightarrow \R$, $\bsigma: \R^3\rightarrow \R$,
\begin{equation}
  \label{eq:bmu}
  \bmu(x,p,v,v',v''):=
  \begin{cases}
    \eta_+ v'' + \mu_+(x,p,v,v'),  & \quad x>0,\\
    \eta_- v'' + \mu_-(x,p,v,v'), & \quad  x<0,
  \end{cases}
\end{equation}
and
\begin{equation}
  \label{eq:bsigma}
  \quad
  \bsigma(x,p,v) := 
  \begin{cases}
    \sigma_+(x,p,v), & \quad  x>0,\\
    \sigma_-(x,p,v), & \quad  x<0.
  \end{cases}
\end{equation}
Denote by $\delta_x$ the Dirac distribution with mass at $x\in \R$ and by $\delta_x'$ its derivative. We define the following functions, which take values in the space of distributions,
\begin{alignat*}{2}
  L_1 &\colon \bigcup_{x\in \R} \left(\Gamma^1(x) \times \{x\}\right) \to \sD,&\quad (v,x) &\mapsto -(v(x+) - v(x-)) \delta_x, \\
  L_2 &\colon \bigcup_{x\in \R} \left(\Gamma^2(x) \times \{x\}\right) \to \sD,&\quad (v,x) &\mapsto (v(x+) - v(x-)) \delta'_x \\
  &&&\qquad - (\nabla v(x+) -\nabla v(x-)) \delta_x.  \nonumber
\end{alignat*}
\begin{rmk}\label{rmk:rest}
  Given $x\in \R$, $v\in \Gamma^2(x)$ it holds that
  \[L_1(v,x)\vert_{\R\setminus\{x\}} = L_2(v,x) \vert_{\R\setminus\{x\}} = 0.\]
  Here, for $f\in \sD$ and a Borel set $I\subset \R$ we write $f\vert_I = 0$ when 
  \[ \scal{f}{\phi}{} = 0,\qquad \forall\, \phi \in C^\infty(I).\]
\end{rmk}
Transforming~\eqref{eq:spde} back into the moving boundary problem we
observe that the description in~\eqref{eq:smbp2} does not explicitly
tell us how $t\mapsto v(t,x)$ should behave when $p_*(t) = x$. Note
that, if $\sigma_*>0$, then for each $T\in (0,\infty)$ the event
$p_*(t) = x$ occurs either for infinitely many $t\in
[0,T]$ or for none. Motivated by the discussion in Section~\ref{sec:motivation}, the following definition of notion of a solution for~\eqref{eq:smbp2} is the ``natural'' definition, in the sense that $L_1$ and $L_2$ describe the behaviour of $v$ at $p_*$. On the other side, Remark~\ref{rmk:rest} shows how to recover~\eqref{eq:smbp2}.

\begin{defn}\label{df:fbp}
  A \emph{local solution} of the stochastic moving boundary problem~\eqref{eq:smbp2} with initial data $v_0$ and $p_0$, is an $L^2(\R)\times\R$ predictable process $(v,p_*)$, and a positive and predictable stopping time $\tau$, with 
  \begin{equation*}
    (v,p_*):\; \llbrak 0, \tau \llbrak \to \bigcup_{x \in \R} \left( \Gamma^1(x)\times \{x\} \right) \; \subseteq \;  L^2(\R)\times \R,
  \end{equation*}
  such that $v(t,.)$ is $\Gamma^2(p_*(t))$-valued, and, for all $\phi \in C^\infty_0(\R)$ on $\llbrak 0, \tau \llbrak$,
  \begin{align*}
    \scal{v(t) - v_0}{\phi}{} &= \int_0^t \scal{\bmu(.-p_*(s),p_*(s), v(s), \nabla v(s), \Delta v(s))}{\phi}{} \d s\\
                              &\quad+ \int_0^t \scal{\bsigma(.-p_*(s), p_*(s), v(s))\d\xi_s}{\phi}{}\\
                              &\quad+ \int_0^t \scal{L_1(v(s), p_*(s))}{\phi}{} \d p_*(s)\\
                              &\quad+ \tfrac12 \int_0^t \scal{L_2(v(s),p_*(s))}{\phi}{} \d\, [p_*](s),\\
    p_*(t) - p_0  &=  \int_0^t \varrho\left(v(s, p_*(s)+), v(t,p_*(s)-)\right) \d s + \sigma_* B_t.
  \end{align*}
  The solution is called global, if $\tau = \infty$ a.\,s. and the interval $\llbrak 0, \tau \llbrak$ is called maximal if there is no solution of \eqref{eq:smbp2} on a larger stochastic interval.
\end{defn}
\begin{rmk}
  The stochastic integral term is defined as
  \begin{equation}
    \label{eq:13}
    \int_0^t \scal{\bsigma(., p_*(s), v(s))\d\xi_s}{\phi}{} = \sum_{k=1}^\infty  \int_0^t\scal{\bsigma(.,p_*(s),v(s)) T_{\zeta}e_k}{\phi}{}  \d \beta^k_s,
  \end{equation}
  which is implicitly assumed to exist in $L^2(\Omega)$.
\end{rmk}
\begin{rmk}
  The quadratic variation of $p_*$ is $[p_*](t) = \sigma_*^2 t$, $t\geq 0$.
\end{rmk}
\begin{rmk}
  This notion of solution is not exactly what one would typically expect under a \emph{weak} or \emph{distributional} solution. In contrast to~\cite[Definition 3.2]{sowersEtAl}, we require $\nabla v$ and $\Delta v$ to exist as $L^2(\R)$ elements, which assures analytically strong existence for the centered equations. 
\end{rmk}

\begin{thm}\label{thm:EUsmbp}
  Let Assumptions~\ref{a:rho}, \ref{a:mu}, \ref{a:sigma} and~\ref{a:zeta} hold true. Let $p_0\in \R$ and $v_0\in H^1(\R\setminus\{p_0\})$ be $\F_0$-measurable and $(u,p_*,\tau)$ be the unique maximal strong solution of~\eqref{eq:spde} with initial data $(v_0(.+p_0), p_0)$ and set $v(t,x) := u(t,x-p_*(t))$, $t\geq 0$, $x\in \R$. Then, $(v,p_*, \tau)$ is a local solution of~\eqref{eq:smbp2} in the sense of~\ref{df:fbp} and satisfies $v$, $\nabla v\in C([0,\tau);L^2(\R))$, $p_*\in C([0,\tau); \R)$ and $\Delta v\in L^2([0,\tau); L^2(\R))$ almost surely. Moreover, $(v, p_*,\tau)$ is unique and maximal under all such solutions. 
\end{thm}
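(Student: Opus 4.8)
The plan is to obtain $(v,p_*,\tau)$ from the maximal strong solution $(u,p_*,\tau)$ of the centered problem~\eqref{eq:spde} --- which exists by Theorem~\ref{thm:euspde} applied to the $\F_0$-measurable initial data $(v_0(\cdot+p_0),p_0)\in H^1(\dot\R)\times\R$ --- via the backward change of frame $v(t,x)=u(t,x-p_*(t))$, and to read off both the claimed path regularity and the defining identity of Definition~\ref{df:fbp} from the properties of $u$ together with It\={o}--Wentzell's formula. For the regularity one uses that, for each $a\in\R$, the translation $w\mapsto w(\cdot-a)$ is an isometry of $H^k(\R\setminus\{0\})$ onto $H^k(\R\setminus\{a\})$ and of $L^2(\R)$ onto itself and maps the boundary conditions~\eqref{eq:bcc} at $0$ onto the conditions defining $\Gamma^2(a)$. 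Since $u$ has, by Theorem~\ref{thm:euspde}, paths in $C([0,\tau);H^1(\dot\R))\cap L^2([0,\tau);H^2(\dot\R))$, satisfies~\eqref{eq:bcc}, and $t\mapsto p_*(t)$ is continuous, one gets $v(t,\cdot)\in\Gamma^2(p_*(t))$ on $\llbrak 0,\tau\llbrak$, $v,\nabla v\in C([0,\tau);L^2(\R))$ (splitting $v(t)-v(s)$ into a part controlled by the $L^2$-continuity of $u$ and a part controlled by the $L^2$-continuity of translations together with the continuity of $p_*$) and $\Delta v\in L^2([0,\tau);L^2(\R))$; moreover $p_*\in C([0,\tau);\R)$ by construction, predictability of $(v,p_*)$ is inherited from that of $(u,p_*)$, and $\tau$ is predictable, being announced by the localizing sequence used in its construction.

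For the equation, I would view $u$ as an $L^2(\R)\subset\sD$-valued process. By~\eqref{eq:spde} and Assumptions~\ref{a:mu}, \ref{a:sigma} and~\ref{a:zeta} --- Sobolev embedding making $u,\nabla u$ bounded on $\llbrak 0,\tau'\rrbrak$ for each $\tau'<\tau$, so that the growth bounds render all coefficients $L^2(\R)$-valued there --- $u$ is an It\={o} process in $\sD$ with \emph{explicit} characteristics: drift $\bmu(\cdot,p_*,u,\nabla u,\Delta u)+\tfrac12\sigma_*^2\Delta u+\varrho(u(t,0+),u(t,0-))\nabla u$, colored-noise part $\bsigma(\cdot,p_*,u)\,\d\xi_t(\cdot+p_*)$ and Brownian part $\sigma_*\nabla u\,\d B_t$. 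I would then apply Krylov's It\={o}--Wentzell formula for distribution-valued processes~\cite{krylovItoWentzell,krylov6} to the composition of $u$ with the translation $x\mapsto x-p_*(t)$, $p_*$ being the continuous semimartingale $\int_0^\cdot\varrho\,\d s+\sigma_* B$; equivalently, for $\phi\in C_0^\infty(\R)$ one writes $\scal{v(t)}{\phi}{}=\scal{u(t)}{\phi(\cdot+p_*(t))}{}$ and applies the It\={o} product rule to this pairing, using $\d\phi(\cdot+p_*(t))=\phi'(\cdot+p_*(t))\,\d p_*+\tfrac12\sigma_*^2\phi''(\cdot+p_*(t))\,\d t$ together with the independence of $B$ and $W$, so that the only cross-variation between the martingale part of $u$ and $p_*$ comes from the $\sigma_*\nabla u\,\d B$ term and equals $\sigma_*^2\nabla u\,\d t$. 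Integrating by parts in the $\phi'$- and $\phi''$-contributions produces the distributional derivatives of $v$ across the interface, $\Ddx v=\nabla v+j_0\delta_{p_*}$ and $\Ddxx v=\Delta v+j_1\delta_{p_*}+j_0\delta_{p_*}'$, with the jumps $j_0=v(p_*+)-v(p_*-)$, $j_1=\nabla v(p_*+)-\nabla v(p_*-)$. In the resulting identity the $\varrho\nabla v\,\d t$ and $\sigma_*\nabla v\,\d B$ contributions cancel against those coming from $-\Ddx v\,\d p_*$; all $\Delta v\,\d t$ contributions cancel via the arithmetic $\tfrac12+\tfrac12-1=0$ produced by the extra $\tfrac12\sigma_*^2\Delta u$ in~\eqref{eq:spde}, half the quadratic variation, and the cross-variation; the surviving Dirac contributions assemble exactly into $L_1(v,p_*)\,\d p_*+\tfrac12 L_2(v,p_*)\,\d[p_*]$ --- recall $[p_*]=\sigma_*^2 t$ --- since $L_1(v,p_*)=-j_0\delta_{p_*}$ and $L_2(v,p_*)=j_0\delta_{p_*}'-j_1\delta_{p_*}$; and the colored-noise part transforms into $\bsigma(\cdot-p_*,p_*,v)\,\d\xi$. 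This is precisely the identity of Definition~\ref{df:fbp}, the $p_*$-equation coinciding with the last line of~\eqref{eq:spde} via $v(t,p_*(t)\pm)=u(t,0\pm)$; hence $(v,p_*,\tau)$ is a local solution.

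For uniqueness and maximality I would run the transformation backwards: given any local solution $(\tilde v,\tilde p_*,\tilde\tau)$ of~\eqref{eq:smbp2} in the sense of Definition~\ref{df:fbp}, the process $\tilde u(t,x):=\tilde v(t,x+\tilde p_*(t))$ has, by the requirements of that definition, paths in $C([0,\tilde\tau);H^1(\dot\R))\cap L^2([0,\tilde\tau);H^2(\dot\R))$ satisfying~\eqref{eq:bcc}, and the same computation applied to the translation $x\mapsto x+\tilde p_*(t)$ shows that $(\tilde u,\tilde p_*,\tilde\tau)$ solves~\eqref{eq:spde}: the Dirac terms generated by $-\Ddx\tilde u$ and $\Ddxx\tilde u$ now cancel against $L_1(\tilde v,\tilde p_*)\,\d\tilde p_*+\tfrac12 L_2(\tilde v,\tilde p_*)\,\d[\tilde p_*]$, which is the reverse of the above cancellation, and the $\tfrac12\sigma_*^2\Delta\tilde u$ and $\varrho\nabla\tilde u$ terms reappear. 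The uniqueness and maximality of $(u,p_*,\tau)$ from Theorem~\ref{thm:euspde} then give $\tilde\tau\le\tau$ and $(\tilde u,\tilde p_*)=(u,p_*)$ on $\llbrak 0,\tilde\tau\llbrak$, hence $\tilde v=v$ there and $\llbrak 0,\tau\llbrak$ maximal.

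The hard part will be the rigorous application of Krylov's It\={o}--Wentzell formula in this setting: one has to verify its hypotheses for the $\sD$-valued (in fact $L^2(\R)$-valued) semimartingale $u$ --- predictability, local integrability of the $\d t$- and noise-characteristics in the relevant norms on $\llbrak 0,\tau'\rrbrak$, and, for the colored-noise coefficient, sufficient spatial smoothness, which is precisely where Assumption~\ref{a:zeta} on the two bounded spatial derivatives of $\zeta$ is used --- and then to control the genuinely distribution-valued increments, namely the $\delta_{p_*}$- and $\delta_{p_*}'$-terms and the $\d[p_*]$-integrand $-j_0\phi'(p_*)-j_1\phi(p_*)$, which is integrable by the local boundedness of $v$ and $\nabla v$. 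A secondary point is to check that the transformed colored-noise coefficient is still Hilbert--Schmidt from $U$ into $L^2(\R)$, so that the series~\eqref{eq:13} converges in $L^2(\Omega)$; this again uses Assumptions~\ref{a:sigma} and~\ref{a:zeta}.
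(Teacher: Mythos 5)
Your proposal is correct and follows essentially the same route as the paper: existence via Krylov's It\={o}--Wentzell formula applied to the $L^2(\R)\subset\sD$-valued centered solution with the shift $x_t=-p_*(t)$, the jump identity $\Ddx h-\nabla h=(h(p+)-h(p-))\delta_p$ producing exactly the $L_1$ and $L_2$ terms, shift-continuity for the path regularity, and uniqueness by running the transformation backwards and invoking the uniqueness of the centered problem. The only step you gloss over that the paper spells out is the upgrade, in the uniqueness direction, from the resulting \emph{distributional} solution of the centered equation to a \emph{strong} one (checking Bochner/stochastic integrability of $\cA Y$, $\cB(Y)$, $\cC(Y)$ in $\L^2$ and using density of test functions), which is needed before Theorem~\ref{thm:euspde} can be applied.
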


\begin{rmk}
  In general, we cannot expect $x\mapsto v(t,x)$ to be continuous at
  $x=p_*(t)$, even if this holds true for $t=0$. For instance,
  consider the special situation where $\kappa_+ = \kappa_- = 0$ and
  $\mu_+$, $\mu_- \equiv 0$, $\rho \equiv 0$, $\sigma_+$, $\sigma_-\equiv 0$, $\sigma_* > 0$ and let $v_0 \in C(\R)\cap H^1(\R)$. 

  Assume that $v(t,.) \in C(\R)$, $\d t\otimes
  \PP$ almost everywhere. Then by Neumann boundary conditions at
  $p_*$ we have also $v(t,.)\in C^1(\R)\cap H^2(\R)$ and 
  \[L_1(v(t.), p_*(t)) = L_2(v(t,.), p_*(t)) = 0.\]
  Thus, Definition~\ref{df:fbp} implies that $v$ is the weak solution
  of the (deterministic) heat equation on $\R$. 
  Since $p_*(t) = \sigma_* B_t$ and $v$ is independent of $p_*(t)$, we get
  from~\eqref{eq:bc} that $\ddx v(t,x) = 0$ for almost all $t>0$ and
  $x\in \R$. Because $v(t,.)\in L^2(\R)$ this also yields $v(t,x) = 0$.  
\end{rmk}

\begin{rmk}
  In the deterministic situation, i.\,e. $\sigma_+$,
  $\sigma_-\equiv 0$ and $\sigma_* = 0$, one can get local classical
  solutions of the centered problem~\eqref{eq:spde} e.\,g. by standard theory for semilinear evolution
  equations~\cite{lunardiAnalytic}. Using time-differentiability
  of $u$ and $p_*$, the change of coordinates $x\mapsto x+p_*$ can then be performed by chain rule, locally on $[0,t_x)$, where
  \[ t_x:=  \inf\{t>0 \,\colon\, p_*(t) = x \}\wedge \tau,\qquad x\in \R.\]  
\end{rmk}

\subsection{Global Solutions}
Assume that the assumptions of Theorem~\ref{thm:euspde} are satisfied and let $(u,p_*,\tau)$ be the unique maximal solution of~\eqref{eq:spde}. Define the stopping time
\[ \tau_0 := \limsup_{N\to\infty}\,\inf\left\{ t\geq 0 \,\mid \, t<\tau,\, \abs{u(t,0+)} + \abs{u(t,0-)} > N\right\}. \] 
Here, we here use the convention that $\inf \emptyset := \infty$. The following assumption ensures that the coefficients involving $\mu$ and $\sigma$ have linear growth and hence, one would expect that explosion in~\eqref{eq:spde} can be due to the moving inner boundary only. 
\begin{ass}
  \label{a:musigmalg}
  Assume that $b$ and $\tilde b$ in Assumption~\ref{a:mu} and~\ref{a:sigma} are globally bounded.
\end{ass}
\begin{thm}\label{thm:tau0tau}
  Let the assumptions of Theorem~\ref{thm:euspde} be satisfied, and Assumption~\ref{a:musigmalg} hold true. Then, $\P{\tau_0 = \tau} = 1$, and almost surely on $\{\tau<\infty\}$,
  \[ \lim_{t\nearrow \tau} \abs{v(t,p_*(t)+)} + \abs{v(t,p_*(t)-)} = \infty.\]
\end{thm}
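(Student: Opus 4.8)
The plan is to reduce Theorem~\ref{thm:tau0tau} to Theorem~\ref{thm:euspde}, whose maximal solution already provides a characterisation of $\tau$ as a blow-up time in the state space $H^1(\dot\R)$. The key observation is that the extra second-order term $\tfrac12\sigma_*^2\Ddxx u$ plus the linear-growth Assumption~\ref{a:musigmalg} force the only mechanism of explosion to be the boundary values $u(t,0\pm)$. Since $v(t,p_*(t)\pm) = u(t,0\pm)$, this is exactly the claimed statement once we know $\tau_0 = \tau$ a.s.

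\emph{Step 1 (reduction to an a priori bound).} First I would show the inclusion $\tau_0 \leq \tau$ is an equality by contradiction. On $\{\tau_0 < \tau\}$ we have, by definition of $\tau_0$ and the convention $\inf\emptyset=\infty$, that $\limsup_{t\nearrow\tau_0}(\abs{u(t,0+)}+\abs{u(t,0-)}) < \infty$ while $\tau_0<\tau$, so in particular the boundary values stay bounded on $[0,\tau_0]$. Fix such an $\omega$ and an $N$ with $\abs{u(t,0\pm)}\leq N$ on $[0,\tau_0)$. On this stochastic interval the equation~\eqref{eq:spde} is, after absorbing the bounded transport coefficient $\varrho(u(t,0+),u(t,0-))$, a semilinear parabolic SPDE with coefficients of linear growth in $(u,\Ddx u)$ uniformly in $\omega$ and $p$, because $\varrho$ is locally Lipschitz (Assumption~\ref{a:rho}), hence bounded on $[-N,N]^2$, and $b,\tilde b$ are globally bounded (Assumption~\ref{a:musigmalg}). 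By stochastic maximal $L^2$-regularity (the estimates of the Appendix used in the proof of Theorem~\ref{thm:euspde}) one derives an a priori bound
\[
  \E\Big[\sup_{t\leq \tau_0\wedge T}\norm{u(t)}{H^1(\dot\R)}^2 + \int_0^{\tau_0\wedge T}\norm{u(t)}{H^2(\dot\R)}^2\d t\Big] < \infty
\]
for every deterministic $T$, via a Gronwall argument on the squared $H^1$-norm (the drift contributes $\int\norm{\Ddxx u}{}\norm{u}{}$ absorbed by the $\sigma_*^2/2$-parabolicity and the $\eta_\pm$-terms; the transport term $\varrho\,\Ddx u$ is controlled by $\norm{\Ddx u}{}\norm{u}{}$; the noise term with the gradient is precisely the borderline case handled in Section~\ref{sec:centered}; and the $\mu_\pm,\sigma_\pm$ contributions are linear). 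Consequently $\limsup_{t\nearrow\tau_0}\norm{u(t)}{H^1(\dot\R)} < \infty$ a.s. on $\{\tau_0<\tau\}$.

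\emph{Step 2 (contradiction with maximality).} By Theorem~\ref{thm:euspde} the solution has paths in $C([0,\tau);H^1(\dot\R))$ and $\tau$ is \emph{maximal}; the standard blow-up alternative for maximal strong solutions of stochastic evolution equations (cf.\ the construction in Section~\ref{sec:centered}) states that on $\{\tau<\infty\}$ one has $\limsup_{t\nearrow\tau}\norm{u(t)}{H^1(\dot\R)} = \infty$. If $\{\tau_0<\tau\}$ had positive probability, Step~1 would give a finite $\limsup$ of the $H^1$-norm at $\tau_0 < \tau \leq \infty$, and one could extend the solution beyond $\tau_0$ by restarting at a time $t<\tau_0$ close to $\tau_0$ using the local existence theory, contradicting that $[0,\tau)$ with $\tau>\tau_0$ is where the solution lives maximally — more precisely, it shows the solution does not explode at $\tau_0$, so $\tau_0$ cannot be a strict lower bound for the explosion time unless $\tau_0=\tau$. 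Hence $\P{\tau_0<\tau} = 0$. The reverse inequality $\tau_0 \geq \tau$ is immediate since, on $\{\tau<\infty\}$, the $H^1$-blow-up at $\tau$ together with the embedding $H^1(\dot\R)\hookrightarrow BUC(\dot\R)$ does \emph{not} by itself force the boundary values to blow up — so this direction genuinely needs the argument too: one shows that if the trace stayed bounded up to $\tau$, Step~1's a priori bound again contradicts $H^1$-explosion at $\tau$. Thus $\tau_0 = \tau$ a.s.

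\emph{Step 3 (from $u$ to $v$).} Finally, on $\{\tau<\infty\}$ we now know $\limsup_{t\nearrow\tau}(\abs{u(t,0+)}+\abs{u(t,0-)}) = \infty$; it remains to upgrade $\limsup$ to $\lim$. I would argue this from the continuity of $t\mapsto u(t,0\pm)$ on $[0,\tau)$ (a consequence of $u\in C([0,\tau);H^1(\dot\R))$ and the continuity of the trace map $H^1(\dot\R)\to\R^2$) together with the fact that any finite-$\liminf$, infinite-$\limsup$ scenario would, via Step~1 applied on excursions where the trace is bounded, again violate $H^1$-explosion being the \emph{only} way to reach $\tau$: between a time where $\abs{u(0+)}+\abs{u(0-)}$ is small and a time where it is large, the parabolic smoothing plus linear growth keeps the $H^1$-norm finite, so the $H^1$-norm can only blow up if the trace does not return to a bounded regime, i.e.\ $\lim = \infty$. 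Combined with $v(t,p_*(t)\pm)=u(t,0\pm)$ from Theorem~\ref{thm:EUsmbp}, this is the assertion. The main obstacle is Step~1: making the a priori $H^1$-estimate rigorous at the borderline parabolicity caused by the gradient-in-noise term, where one must track the maximal-regularity constants $M_q,M_q^W$ and ensure the $\sigma_*^2/2$ gain genuinely dominates — this is the same delicate point flagged in Remark~\ref{rmk:Besov} and Remark~\ref{rmk:para}, and it is why the linear-growth Assumption~\ref{a:musigmalg} (global boundedness of $b,\tilde b$) is indispensable here.
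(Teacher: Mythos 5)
Your overall strategy --- that the boundary trace is the only obstruction to continuation because all other coefficients have linear growth --- is the right one, but the execution has two genuine problems. First, Step~1 starts from a false premise: on $\{\tau_0<\tau\}$ the trace is \emph{not} bounded near $\tau_0$. By construction $\tau_0=\lim_N\tau_0^N$, where $\tau_0^N$ is the \emph{first} time the trace exceeds $N$, so whenever $\tau_0<\infty$ the trace exceeds every level $N$ at times $\tau_0^N\nearrow\tau_0$. You have the two inequalities essentially swapped. The inequality $\tau_0\geq\tau$ is the easy one and needs no a priori estimate at all: if $\tau_0<\tau$, then $N\leq\abs{\cI(X(\tau_0^N))}\leq K\norm{X(\tau_0^N)}{\H^1}\to K\norm{X(\tau_0)}{\H^1}<\infty$ by continuity of the trace operator and $\H^1$-continuity of the path on $[0,\tau)$, a contradiction. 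The direction that actually needs the linear-growth hypothesis is $\tau_0\leq\tau$, i.e.\ that the trace really does explode on $\{\tau<\infty\}$; this is the "trace stays bounded up to $\tau$" scenario you only touch at the end of Step~2.

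Second, for that direction your proposed a priori $H^1$-bound is not carried out, and is delicate to carry out as stated: the level $N$ bounding the trace is random, so you cannot freeze it and run a Gronwall/maximal-regularity estimate in expectation on that event, and you yourself flag the borderline gradient-in-noise term as an unresolved obstacle. The paper avoids this entirely by truncation: replace $\varrho$ by a globally bounded, locally Lipschitz $\varrho_N$ agreeing with $\varrho$ on the ball of radius $N$. Theorem~\ref{thm:globalSEE} (global existence under globally bounded $\varrho$ and Assumption~\ref{a:musigmalg}) yields a \emph{global} solution $X_N$; since $\varrho_N=\varrho$ until the trace exceeds $N$, the uniqueness and maximality claims of Theorem~\ref{thm:euSEE} identify $X=X_N$ on $\llbrak 0,\tau_0^N\llbrak$ and give $\tau_0^N\leq\tau$ for every deterministic $N$, hence $\tau_0\leq\tau$. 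This reuses the already-established global well-posedness --- where the borderline constants and the bound $L_*<\sqrt2$ of Lemma~\ref{lem:noiselip} were handled once and for all --- instead of re-deriving an energy estimate. (Your Step~3 concern about upgrading $\limsup$ to $\lim$ is real but is also present in the paper, which defers it to the analogous argument in the reference~\cite{SFBP}.) I would restructure the proof around this truncation device.
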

\begin{thm}\label{thm:global}
  Assume that the assumptions of Theorem~\ref{thm:euspde} are satisfied, that Assumption~\ref{a:musigmalg} holds true, and that $\varrho$ is globally bounded. Then, $\tau = \infty$ almost surely.  If, moreover, $v_0$, $\nabla v_0 \in L^2(\Omega\times \R, \d\PP\otimes \d x)$ and $p_0 \in L^2(\Omega; \R)$, then for all $T>0$ there exists a constant $K$ such that
  \begin{multline}
    \EE\sup_{0\leq t\leq T} \left(\abs{p_*(t)}^2 + \int_\R \abs{v(t,x)}^2 + \abs{\nabla v(t,x)}^2\d x\right)\\
    + \EE\int_0^T \int_\R \abs{\Delta v(t,x)}^2 \d x \d t  \\ 
    \leq K\left(1 + \E{\norm{v_0}{L^2}^2 + \norm{\nabla v_0}{L^2}^2 + \abs{p_0}^2}\right).
  \end{multline}
\end{thm}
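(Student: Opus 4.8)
The plan is to prove the a priori estimate and deduce non-explosion from it, using Theorem~\ref{thm:tau0tau}. First I would reduce to bounded initial data: writing $\Omega=\bigcup_{m}\{m-1\le\|u_0\|_{H^1(\dot\R)}<m\}$ and solving on each $\F_0$-piece, it suffices to treat $u_0$ with $\|u_0\|_{H^1(\dot\R)}\le m$ for the non-explosion part (for the quantitative bound one uses the stronger integrability hypothesis on $(v_0,p_0)$ directly). With $(u,p_*,\tau)$ the maximal strong solution of~\eqref{eq:spde}, set $\tau_N:=\inf\{t\in[0,\tau):\|u(t)\|_{H^1(\dot\R)}\ge N\}$ with $\inf\emptyset:=\tau$; since $u\in C([0,\tau);H^1(\dot\R))$ one has $\tau_N\nearrow\tau$, and by the Sobolev embedding $H^1(\dot\R)\hookrightarrow BUC(\dot\R)$ together with Theorem~\ref{thm:tau0tau}, on $\{\tau<\infty\}$ one has $\|u(t)\|_{H^1}\ge c\,(|u(t,0+)|+|u(t,0-)|)\to\infty$ as $t\nearrow\tau$. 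Hence a bound $\sup_N\EE\sup_{t\le\tau_N\wedge T}\|u(t)\|_{H^1}^2<\infty$ forces $\P{\tau\le T}=0$, and letting $T\to\infty$ gives $\tau=\infty$ almost surely; the bound for $p_*$ is immediate from $\d p_*=\varrho\,\d t+\sigma_*\,\d B$ with $\varrho$ globally bounded.

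The $L^2$-energy balance. I would apply It\^o's formula for $\|u(t)\|_{L^2(\R)}^2$ to the strong solution of~\eqref{eq:spde}, legitimate since $u\in C([0,\tau);L^2(\R))$ and $\Delta u\in L^2_{loc}([0,\tau);L^2(\R))$. The crucial cancellation is as follows: integrating $2\scal{u}{(\eta_\pm+\tfrac12\sigma_*^2)\Delta u}{L^2}$ by parts over $\dot\R$ and using~\eqref{eq:bcc} produces $-(2\eta_\pm+\sigma_*^2)\|\nabla u\|_{L^2(\dot\R)}^2$ plus the nonpositive Robin terms $-2(\eta_\pm+\tfrac12\sigma_*^2)\kappa_\pm\,u(t,0\pm)^2$, while the It\^o correction of the $\sigma_*\nabla u\,\d B$-term is $+\sigma_*^2\|\nabla u\|_{L^2(\dot\R)}^2$, so the net second-order contribution is the strictly dissipative $-2\min\{\eta_+,\eta_-\}\|\nabla u\|_{L^2(\dot\R)}^2$ (there is no cross-variation with the $\d\xi$-noise since $B$ is independent of $W$). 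The drift $\mu_\pm$ and the diffusion $\bsigma$ contribute, by the linear-growth Assumptions~\ref{a:musigmalg}, \ref{a:mu}.\ref{ai:mugrowths}, \ref{a:sigma}.\ref{ai:sigmagrowths} together with Assumption~\ref{a:zeta} (which gives $\sum_k|T_\zeta e_k(y)|^2=\|\zeta(y,\cdot)\|_{L^2(\R)}^2\le C$ uniformly in $y$), terms bounded by $\delta\|\nabla u\|_{L^2(\dot\R)}^2+K(1+\|u\|_{L^2(\R)}^2)$; the transport term $2\scal{u}{\varrho\nabla u}{L^2}=\varrho\,(u(t,0-)^2-u(t,0+)^2)$ is controlled by the one-dimensional trace inequality $|u(t,0\pm)|^2\le\delta\|\nabla u\|_{L^2(\dot\R)}^2+K_\delta\|u\|_{L^2(\R)}^2$ and $|\varrho|\le\|\varrho\|_{L^\infty}$. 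Choosing $\delta$ small, localizing the martingale parts, applying Burkholder--Davis--Gundy and Gronwall yields, uniformly in $N$,
\[ \EE\sup_{t\le\tau_N\wedge T}\|u(t)\|_{L^2(\R)}^2+\EE\int_0^{\tau_N\wedge T}\|\nabla u(s)\|_{L^2(\dot\R)}^2\,\d s\;\le\;K\bigl(1+\EE\|u_0\|_{L^2(\R)}^2\bigr). \]

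The $H^1$-estimate, which is the main obstacle. To upgrade this to control of $\EE\sup_t\|\nabla u(t)\|_{L^2}^2$ and $\EE\int\|\Delta u\|_{L^2}^2$ one cannot naively differentiate~\eqref{eq:spde} in space: the resulting term $\partial_{xxx}u$ would need $u\in H^3$, and $\sigma_*\partial_{xx}u\,\d B$ sits exactly at the borderline of parabolicity. The natural device is the stochastic maximal $L^2$-regularity estimate of Section~\ref{sec:centered} for the linear equation with generator the Robin realization $A$ of $(\eta_\pm+\tfrac12\sigma_*^2)\Delta$ on $\dot\R$ and noise part $u\mapsto\sigma_*\partial_x u$; the coercivity computation of the previous paragraph shows $2\scal{Au}{u}{}+\|\sigma_*\nabla u\|_{L^2}^2\le-2\min\{\eta_+,\eta_-\}\|\nabla u\|_{L^2(\dot\R)}^2$, so in the $p=2$ Hilbert-space setting of Theorem~\ref{thm:euspde} no smallness of $\eta_\pm$ or $\sigma_*$ is needed (this is precisely where the caveat of Remark~\ref{rmk:Besov} would otherwise enter). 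Treating $\mu_\pm$, the transport term $\varrho\nabla u$ and the full $\bsigma\,\d\xi$-noise as inhomogeneities $f$ and $g$, the maximal regularity bound gives on $\llbrak 0,\tau_N\wedge T\rrbrak$
\[ \EE\sup_{t\le\tau_N\wedge T}\|u(t)\|_{H^1(\dot\R)}^2+\EE\int_0^{\tau_N\wedge T}\|u(s)\|_{H^2(\dot\R)}^2\,\d s\;\le\;K\Bigl(\EE\|u_0\|_{H^1(\dot\R)}^2+\EE\!\int_0^{\tau_N\wedge T}\!\bigl(\|f(s)\|_{L^2}^2+\|g(s)\|_{\HS}^2\bigr)\d s\Bigr), \]
where $\|f\|_{L^2}^2\le K(1+\|u\|_{L^2}^2+\|\nabla u\|_{L^2}^2)$ and $\|g\|_{\HS}^2\le K(1+\|u\|_{L^2}^2)$ by the linear-growth assumptions. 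Inserting the $L^2$-bound of the previous step for $\EE\int\|\nabla u\|^2$ and applying Gronwall once more yields $\sup_N\bigl(\EE\sup_{t\le\tau_N\wedge T}\|u(t)\|_{H^1}^2+\EE\int_0^{\tau_N\wedge T}\|\Delta u\|_{L^2}^2\bigr)\le K(1+\EE\|u_0\|_{H^1(\dot\R)}^2)$.

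Conclusion. By the reduction of the first paragraph this uniform bound forces $\tau=\infty$ almost surely. Letting $N\to\infty$ by monotone convergence and translating back via $v(t,x)=u(t,x-p_*(t))$ --- an isometry of $L^2(\R)$ carrying $\nabla u$, $\Delta u$ on $\dot\R$ to $\nabla v$, $\Delta v$ on $\R\setminus\{p_*(t)\}$ --- together with the elementary estimate $\EE\sup_{t\le T}|p_*(t)|^2\le K(1+|p_0|^2+T^2\|\varrho\|_{L^\infty}^2+\sigma_*^2T)$, gives the stated inequality. The hard part is the $H^1$-estimate: because the nonlinearity is non-local in space and only Lipschitz in the $H^1$- rather than the $L^2$-norm, and because of the gradient in the noise term, this step genuinely rests on the maximal $L^2$-regularity machinery of Section~\ref{sec:centered} applied with the inhomogeneity depending on the solution, closed by a Gronwall bootstrap over the localized intervals.
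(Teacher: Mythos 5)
Your argument is correct in outline but takes a genuinely different route from the paper. The paper does not run the energy and maximal-regularity estimates by hand: it observes that, under Assumption~\ref{a:musigmalg} and global boundedness of $\varrho$, the nonlinearities $\cB:\H^1\to\L^2$ and $\cC_1:\H^1\to\HS(\cU;\H^1)$ have \emph{linear growth} (re-using the Nemytskii computations from Lemmas~\ref{lem:driftlip} and~\ref{lem:noiselip}), and then simply invokes the abstract global-existence statement Theorem~\ref{thm:globalSEEabst} with $p=2$, which already packages non-explosion plus the a priori bound $\EE\sup_t\norm{X(t)}{\H^1}^2+\EE\int_0^T\norm{X(s)}{\cA}^2\d s\lesssim 1+\EE\norm{X_0}{\H^1}^2$; the transfer to $(v,p_*)$ is then the isometry of Remark~\ref{rmk:isometry} together with Theorem~\ref{thm:EUsmbp}. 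What you do — the It\^o computation for $\norm{u}{L^2}^2$ with the cancellation between the Robin/coercivity term and the It\^o correction of $\sigma_*\nabla u\,\d B$, followed by a maximal $L^2$-regularity bootstrap treating the nonlinearities as inhomogeneities and closing with Gronwall — is essentially an unpacking of the proof of that abstract theorem. Your version buys transparency (it makes explicit why no smallness of $\sigma_*$ is needed at $p=2$); the paper's version buys brevity, since only growth conditions must be checked.

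Two slips to repair. First, you invoke Theorem~\ref{thm:tau0tau} to get blow-up at $\tau$, but the paper proves Theorem~\ref{thm:tau0tau} \emph{using} the global theorem (via the truncation $\varrho_N$), so this is circular as written; you should instead use the blow-up criterion $\lim_{t\nearrow\tau}\norm{X(t)}{\H^1}=\infty$ on $\{\tau<\infty\}$, which comes directly from Theorem~\ref{thm:eucSEEa}. Second, in the maximal-regularity step the stochastic inhomogeneity must be measured in $\HS(U;E_{\sfrac12})=\HS(U;\H^1)$, not $\HS(U;\L^2)$, for the estimate to return $\EE\int\norm{u}{H^2}^2\d s$; the correct bound is $\norm{g}{\HS(U;H^1)}^2\leq K(1+\norm{u}{H^1}^2)$, which requires the linear growth of $\ddx\sigma_\pm$ and the global bound on $\ddy\sigma_\pm$ from Assumptions~\ref{a:sigma}\ref{ai:sigmagrowths} and~\ref{a:musigmalg} (this is exactly the content of the estimate~\eqref{eq:HSestH1} and the $\cC_1$ computation in the paper's proof). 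With that bound the Gronwall argument still closes, so the gap is reparable, not structural.
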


\section{Application: Limit Order Book Models}
\label{sec:LOB}
In electronic trading, buy and sell orders of market participants are matched and cleared, or, if there is no counterpart, accumulated in the order book. Agents can either send market orders, which are executed ``immediately'' against the best orders currently available, or trade by limit orders. Limit buy orders are executed only at a specified price level $p$ at most and similarly, limit sell orders are executed only for a price $p$ or less. This price level $p$ is called \emph{limit}, and the minimal distance between two limits which is allowed in the market is called \emph{tick size}. One can think of the order book as a collection of buckets, indexed by the limits $p$ and each containing the number of limit orders active at time $t$. Actually, the orders in the book might get executed against incoming market or limit orders but also cancelled, which might happen in a substantial amount, especially in markets with high frequency trading~\cite{contHF}.

Price formation is now an extraction from the current order book state. The highest limit for which the order book contains buy orders is called bid price and the smallest limit with sell orders is called ask price. In highly liquid markets the difference of both, called \emph{spread}, is typically rather small and we assume it to be $0$ for our model. We stress the fact that the bid and ask prices are actually separating buy and sell side of the order book.

To abstract from this point we understand the order book as a two-phase system and aim to model the macroscopic behaviour of a highly liquid market under presence of high frequency trading. For a more detailed introduction and an overview on various types of models we refer to the survey~\cite{lobsurvey}, but see also \cite{bouchaudSMBP} for an approach more related to the present framework.

We denote by $v(t,x)$ the density of the limit buy and sell orders
placed at price $x$, which is on logarithmic scale. We keep the notion
``price'' when actually meaning logarithmic price. As a convention, buy orders will have a negative and sell orders a
positive sign. We let the tick size and the time discretization go to
$0$ and consider a price-time continuous approximation. Then, we
expect the evolution of the order book density to be described by an
SPDE, whereas the price process $p_*$ is the inner boundary
separating buy and sell side of the order book.
For the density dynamics, Zheng~\cite{zhengPhD} proposed the linear heat equation with additive space-time white noise $\xi$ and Dirichlet boundary conditions,
\begin{align*}
  \d v(t,x) &= \eta_+ \ddxx v(t,x) \d t + \sigma_+(\abs{x-p_*(t)}) \d \xi_t(x),\qquad x>p_*(t),\\
  \d v(t,x) &= \eta_- \ddxx v(t,x) \d t + \sigma_-(\abs{x-p_*(t)}) \d \xi_t(x),\qquad x<p_*(t),\\
  v(t,p_*(t)) &= 0,
\end{align*}
and the interaction of price and order book evolution is given by the Stefan condition
\[ \d p_*(t) = \rho\*(\ddx v(t,p_*(t)-) - \ddx v(t,p_*(t)+)) \d t.\]

In order to ensure existence of the right hand side under presence of
space-time white noise, it is assumed by Zheng that the volatilities
$\sigma_+$ and $\sigma_-$ vanish at $p_*$ faster than linear. To be
more precise, $\sigma_+$, $\sigma_-$ are Lipschitz continuous and
$\sigma_{\pm} \sim x^{\alpha}$ as $x\to 0$, for $\alpha >\sfrac32$
which in fact yields that $\sigma_{\pm}$ and its derivative vanish at
the origin. However, empirical observations show that the order flow
has a maximum at the bid and ask, see e.\,g. \cite{cont2010obmodel,
  lehalle}. The assumption on $\sigma$ would force us to average out
all events at the best bid and ask, in particular effects coming from
market orders. 

When introducing spatial correlation in the driving Gaussian field $\xi$, the assumptions on the decrease of the volatility can be relaxed, see Assumption~\ref{a:sigma} for instance. 

\begin{rmk}\label{rmk:lobslope}
  Using empirical data from Paris Bourse (now Euronext), Bouchaud et al.~\cite{bouchaudProfile} identified an average order book shape, which turns out to be symmetric at the price and has its maximum few ticks away from the bid (resp. ask) price. Note that it is not surprising that the maximum is not achieved directly the bid and ask levels since orders at the bid and ask level have a much higher probability being executed. As the distance $\delta$ from the bid (resp. ask) gets large, the average shape decreases like $\delta^{-\beta}$ for $\beta \approx 1.6$~\cite{bouchaudProfile}.
\end{rmk}

\subsection*{Order book dynamics} 
We split market participants in two major groups. On one side, we consider market makers trading at high frequencies and, on the other hand, low frequency traders such as institutional investors. The reason for this choice is that their trading behaviour and also their objectives are substantially different. For instance, market makers and high frequency traders typically do not accumulate large inventories~\cite{kirilenkoHFT}. In fact, what is typically observed at the end of each trading day is a rapid increase of the trading volume~\cite[Figure 2]{biaisLOBOrderFlow} since many positions have to be cleared overnight. On the other hand, the objectives of low frequency traders are based more on long-term strategies. For instance, an institutional investor has to sell a certain amount of stocks due to exogenous events. 

We consider the following dynamics for the evolution of the order book density $v$ and the price process $p_*$,
\begin{gather}
  \begin{split}
    \d v(t,x) &= \left[\eta_+ \ddxx v +  f_+(\abs{x-p_*(t)},v) \ddx v(t,x) - \alpha_+ v(t,x)\right] \d t \\
    & \qquad  + g_+(\abs{x-p_*(t)})\d t + \sigma_+(\abs{x-p_*(t)}) v(t,x)\d \xi_t (x), \;  x > p_*(t),\\
    \d v(t,x) &= \left[\eta_- \ddxx v - f_-(\abs{x-p_*(t)}, v)\ddx v(t,x)  - \alpha_- v(t,x) \right] \d t\\
    &\qquad  - g_-(\abs{x-p_*(t)}) \d t  + \sigma_-(\abs{x-p_*(t)}) v(t,x)\d \xi_t(x), \;  x < p_*(t),
  \end{split}\label{eq:smbplob}
\end{gather}
where the noise $\xi_t(x)$ is white in time but colored in space. Let
us motivate the terms of~\eqref{eq:smbplob} separately. Note that for
the effects which are due to market makers we mainly follow
\cite{bouchaudSMBP} but also include stochastic forcing terms which
have contribution to the total volume.
\begin{itemize}
\item A large amount of transactions due to market makers and high frequency traders is covered by cancellations or readjustments of orders in the book. On one hand, the individual adjustments average out and yield diffusive behaviour in the order book, described by the diffusion coefficients $\eta_+$ and $\eta_-$, see also \cite{bouchaudSMBP}.
\item On the other hand, following \cite{bouchaudSMBP}, collective readjustments of market participants are due to public available information. Here, this happens at rate $f_{+\slash -}(\abs{x-p_*(t)}, v(t,x))$. Assuming that orders are tendentially shifted into the direction of the mid price, we expect $f_+$, $f_- \geq 0$.
\item $\alpha_+$ and $\alpha_-$ are the cancellation rates for buy and sell side respectively. We assume that cancellation in the order book is proportional to the number of orders at the respective level.
\item The average limit order arrivels of institutional or private investors are modelled by $g_+$ and $g_-$. We assume their contribution is due to external forces and independent of the order book state, see also~\cite[Fig 1]{lehalle}.
\item $\sigma_+$ and $\sigma_-$ are the volatilities of proportional trading activity which is not averaged out in the model. In particular, $\sigma_{\pm}$ is allowed to depend on the distance to the mid price and thus, could incorporate higher order submission and cancellation rates close to the mid price. On the other hand, the impact of the noise will vanish far away from the mid price. Moreover, the empirical data in \cite{lehalle} indicate that volatility in the queues increases with lengths of the queues. 
\end{itemize}
For a detailed explanation of the diffusive drift behaviour see~\cite[Appendix 1]{bouchaudSMBP}.
\begin{example} [Burger's equation]
  We impose the assumption that market makers, or high frequency traders, in general, are the more tempted to move their order, the worse their actual position in the current queue is. Simplifying this, the rate at which orders are rearranged collectively should be proportional to the amount of orders in the respective bucket. Mathematically, this corresponds to the choice $f_{+\slash -}(x,v) := c_{+\slash -} v$, for $x$, $v\in \R$, $c_+>0$, $c_- <0$, and we get an extension of the classical viscous Burger's equation, see also Example~\ref{ex:burger}.
\end{example}

\begin{example}
  Extending the model we replace the assumption that collective readjustments tend into the direction to the mid price by the following. Agents with a position at the end of the queue aim to get into a better position and thus readjust their order to a price level with a shorter queue. To capute this we make the sign in front of the rate $f_{+\slash -}$ dependent on $\ddx v$, namely
  \[ f_+(x, v, v') = \sign(v')\tilde f_+(x,v) ,\qquad f_-(x, v, v') = \sign(v')\tilde f_-(x,v),\]
  for readjustment rates $\tilde f_{+\slash -}$. Equivilently, we replace $\ddx v(t,x)$ by $\abs{\ddx v(t,x)}$ in~\eqref{eq:smbplob}. In particular, under sufficient assumptions on $f_+$ and $f_-$, the extended model still fits into our analytic framework.
\end{example}

\subsection*{Price dynamics}

A commonly used predictor for the next price move is the imbalance of the order volume in the top level bid and ask queue, which we denote by $\VI$. Despite empirical evidence, see \cite{liptonImbalance} but also \cite{contImbalance} and the references therein, this mechanism is quite intuitive from a microscopic view point: For instance, if $\VI\gg 0$ which means that the volume at the best ask level is small compared to the best bid queue, then it should be much more likely that these orders are executed before the limit orders in the bid queue are. In this case the price moves up. With the same arguments, we would expect the price to decrease if $\VI\ll 0$. Translated to macroscopic scale this means
\begin{equation}
  \label{eq:priceVI}
  \d p_*(t) \approx \varrho(\VI(t)) \d t,
\end{equation}
at least on average, cf. \cite[Fig 1]{liptonImbalance}. Here, $\varrho: \R\to\R$ is a locally Lipschitz function with $\varrho(0) = 0$, describing the intensity of this relation. Recalling the convention that buy orders have negativ signs the volume imbalance reads as
\[\VI(t) = -v(t,p_*(t)-)-v(t,p_*(t)+).\]
Incorporating also exogenous events affecting price movements we perturb the price dynamics by Brownian noise,
\begin{equation}
  \label{eq:7}
  \d p_*(t) = \varrho(-v(t,p_*(t)-) -v(t,p_*(t)+))\d t + \sigma_* \d B_t,\quad t\geq 0.
\end{equation}
For $\sigma_*>0$, this can be seen as a
(time-inhomogeneous) extension of the classical Bachelier model. If
$\sigma_*=0$, this is a modification for first order boundary
conditions of the Stefan-type dynamics proposed in~\cite{zhengPhD} and~\cite{SFBP}. Note
that the additional noise term here is a significant step in direction
of more realistic models. In fact, the paths of the price processes
resulting for $\sigma_*=0$, also in the literature, are almost surely
$C^1$. Unfortunately, the analysis of the solutions - starting with
existence and uniqueness - gets much more involved. 

\begin{rmk}
  Prices here are on logarithmic scale. For small tick
  sizes, this is a reasonable approximation of the linear tick scale in
  real markets, cf. \cite{doubleauc}. On the other hand, it is also consistent with asymmetries
  between larger up and down moves of the price. In particular, the
  model assumptions can be chosen symmetric with respect to the mid price, here.
\end{rmk}
\begin{rmk}
  Recall that the Stefan condition occurred in a model for heat diffusion in a system of water and ice~\cite{stefanEis}. Since a certain amount of energy is required for solidification of water or melting of ice, conservation of mass only holds for the energy, and the temperature is diffusing only partially between ice and water. This is quite related to our situation, where only part of the agents having orders in the best bid bucket are willing to cross the spread and trade with market instead of limit orders. In particular, ice melts or water solidifies only when the enthalpy crosses a certain energy level, whereas the price moves only when the best bid, resp. ask bucket gets empty. 
\end{rmk}

\section{The centered equations}
\label{sec:centered}
By reflection, we can rewrite~\eqref{eq:spde} into the stochastic evolution equation on $\L^2:= L^2(\R_+)\oplus L^2(\R_+)\oplus \R$,
\begin{equation}
  \d X(t) = \left[ \cA X(t) + \cB(X(t))\right] \d t + \cC(X(t)) \d \cW_t,\qquad X(0) = X_0. \label{eq:SEEqG}
\end{equation}
with coefficients
\begin{align}
  \cA &= \begin{pmatrix}
    (\eta_++\tfrac12 \sigma^2_*) \Delta_+ & 0 & 0 \\ 
    0 & (\eta_-+\tfrac12 \sigma^2_*) \Delta_-  & 0 \\
    0 & 0 &0
  \end{pmatrix} - c\, \textrm{Id}, \label{eq:coefAG}\\
  \cB(u)(x) &= \begin{pmatrix}
    \mu_+(x, p,u_1(x), \ddx u_1(x))   \\
    \mu_-(-x, p,u_2(x), -\ddx u_2(x))  \\
    0
  \end{pmatrix} + c\, \textrm{Id} + \varrho(\cI(u)) \overline{\nabla} u ,\label{eq:coefBG}\\
  \cC(u)[w,b](x) &= \cC_1(u)[w,b](x) + \cC_2(u)[w,b](x) \nonumber \\
  &:= \begin{pmatrix}
    \sigma_+(x, p, u_1(x)) T_\zeta w (p+x)  \\
    \sigma_-(-x, p, u_2(x)) T_\zeta w(p-x)\\
    0
  \end{pmatrix} +  \sigma_* b \overline{\nabla} u
  ,\label{eq:coefCG}
\end{align}
for $u= (u_1,u_2,p)\in \DA,\,b\in\R,\,w\in U:=L^2(\R),\,x\geq 0$. We denote the trace operator by $\cI(u):= (u_1(0), u_2(0))$ and write
\begin{equation}
  \label{eq:1}
  \overline{\nabla} u :=
  \begin{pmatrix}
    \ddx u_1\\ -\ddx u_2 \\ 1
  \end{pmatrix}.
\end{equation}
The constant $c>0$ has to be chosen sufficiently large, as we will see below. Moreover, $\cW:= (W, B)$ is a cylindrical Wiener process on the Hilbert space $\cU := U\oplus \R$, and $\Delta_+$, $\Delta_-$ are the realization of the Laplacian with respective domains
\[\dom(\Delta_\pm) := \{ u \in L^2(\R_+)\,\vert\, \ddx u(0) = \kappa_\pm u(0)\}.\]
The domain of $\cA$ is then given by
\[ \dom(\cA) = \dom(\Delta_+)\times \dom(\Delta_-) \times \R \subset \L^2,\]
which is itself a Hilbert space when equipped with the inner product
\[ \scal{u}{v}{\cA}  := \scal{u}{v}{\L^2} + \scal{\cA u}{\cA v}{\L^2},\quad u,\,v\in \DA.\]
We also introduce the Sobolev spaces $\H^k:= H^k(\R_+)\oplus H^k(\R_+)\oplus \R$, $k\in \N$. With the next theorem we show that under the hypothesis of the previous section, we get at least locally a unique strong solution in the sense of Definition~\ref{df:strongSEE}.
\begin{thm}\label{thm:euSEE}
  Assume that Assumptions~\ref{a:rho}, \ref{a:mu}, \ref{a:sigma} and \ref{a:zeta} hold true and let $c>\max\{\eta_+\kappa_+^2, \eta_-\kappa_-^2\}$. Then, for every $\F_0$-measurable initial data $X_0  \in\H^1$, there exists a unique maximal strong solution $(X,\tau)$ with trajectories almost surely in 
  \[ L^2(0,\tau; \DA) \cap C([0,\tau); \H^1).\]
\end{thm}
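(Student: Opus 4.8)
The plan is to cast \eqref{eq:SEEqG} into the abstract framework sketched in Appendix~\ref{appendix} and verify its hypotheses, after decomposing the dynamics into a linear \emph{stochastically parabolic} part that has to be kept intact and into lower-order, locally Lipschitz perturbations. Writing $\cC=\cC_1+\cC_2$ as in \eqref{eq:coefCG}, the component $\cC_2(u)[w,b]=\sigma_* b\,\overline\nabla u$ contains a first spatial derivative, so that $\|\cC_2(u)\|_{\HS(\cU;\L^2)}^2$ is comparable to $\|\overline\nabla u\|_{\L^2}^2$ and is of the same order as $\cA$; it therefore cannot be absorbed into a nonlinearity and must be analysed together with $\cA$. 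I would consequently treat $(\cA,\cC_2)$ as the linear part and leave $\cB$ --- which contains the quasilinear, but only first-order, term $\varrho(\cI(u))\overline\nabla u$ and the compensating $c\,u$ --- together with the zeroth-order $\cC_1$ as the nonlinear perturbations, mapping the trace space $\H^1$ of the problem into $\L^2$ and $\HS(\cU;\H^1)$ respectively.

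First I would handle the linear part. The operator $\cA$ from \eqref{eq:coefAG} is the orthogonal sum of $(\eta_\pm+\tfrac12\sigma_*^2)\Delta_\pm - c$ on $L^2(\R_+)$ and of $-c$ on $\R$; each block is self-adjoint, and an integration by parts for $u\in\DA$ expresses $\scal{\cA u}{u}{\L^2}$ through $\|\partial_x u_\pm\|_{L^2(\R_+)}^2$, the boundary terms $\k_\pm|u_\pm(0)|^2$ and $-c\|u\|_{\L^2}^2$. Using the trace inequality $|u_\pm(0)|^2\le\varepsilon\|\partial_x u_\pm\|^2+C_\varepsilon\|u_\pm\|^2$ on $\R_+$ together with the hypothesis $c>\max\{\eta_+\k_+^2,\eta_-\k_-^2\}$ one obtains $0\in\rho(\cA)$ and the coercivity bound $\scal{\cA u}{u}{\L^2}\le-\delta\|u\|_{\H^1}^2$ for some $\delta>0$; since $\cA$ is then negative self-adjoint it has a bounded $H^\infty$-calculus of angle $0$, hence deterministic and, by the Hilbert-space results of the appendix, stochastic maximal $L^2$-regularity, and the trace space $(\L^2,\DA)_{1/2,2}$ coincides with $\dom((-\cA)^{1/2})=\H^1$. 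To include $\cC_2$ I would note $\|\cC_2(u)\|_{\HS(\cU;\L^2)}^2=\sigma_*^2(\|\partial_x u_1\|^2+\|\partial_x u_2\|^2)+\sigma_*^2$, where the constant summand coming from the third component of $\overline\nabla u$ is harmless (it makes $\cC_2$ affine and just adds a fixed Hilbert--Schmidt operator, i.e.\ a bounded additive noise), and observe that in $2\scal{\cA u}{u}{\L^2}+\|\cC_2(u)\|_{\HS}^2$ the $\sigma_*^2$-contributions of the gradients cancel \emph{exactly}, so that the strong stochastic parabolicity estimate $2\scal{\cA u}{u}{\L^2}+\|\cC_2(u)\|_{\HS}^2\le-\delta\|u\|_{\H^1}^2+K\|u\|_{\L^2}^2$ holds; by the criterion recalled in the appendix this shows that $(\cA,\cC_2)$ lies in the stochastic maximal $L^2$-regularity class with associated trace space $\H^1$.

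Next I would verify that $\cB:\H^1\to\L^2$ and $\cC_1:\H^1\to\HS(\cU;\H^1)$ are well defined and locally Lipschitz on bounded sets. The basic tool is the Sobolev embedding $H^1(\R_+)\hookrightarrow BUC(\R_+)$, by which, on $\{\|u\|_{\H^1}\le R\}$, the pointwise values $u_\pm(x)$, the trace $\cI(u)$ and the price component $p=u_3$ are all bounded by a constant depending on $R$. For the $\mu_\pm$-parts of $\cB$ one reads off boundedness and the Lipschitz estimate directly from Assumption~\ref{a:mu}, the $L^2$-functions $a$, $a_R$ being precisely what makes the $L^2(\R_+)$-norms finite and the $p$-dependence being covered by \ref{ai:mulipp}. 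For $\varrho(\cI(u))\overline\nabla u$ I would split $\varrho(\cI(u))\overline\nabla u-\varrho(\cI(v))\overline\nabla v$ into $\varrho(\cI(u))(\overline\nabla u-\overline\nabla v)$ and $(\varrho(\cI(u))-\varrho(\cI(v)))\overline\nabla v$ and use local Lipschitz continuity of $\varrho$ (Assumption~\ref{a:rho}), $|\cI(u)-\cI(v)|\le K\|u-v\|_{\H^1}$ and $\|\overline\nabla v\|_{\L^2}\le\|v\|_{\H^1}+1$; the term $c\,u$ is bounded linear. For $\cC_1$, which does not depend on the $B$-component, I would use $\sum_k|T_\zeta e_k(y)|^2=\|\zeta(y,\cdot)\|_{L^2(\R)}^2$ and $\sum_k|(T_\zeta e_k)'(y)|^2=\|\partial_x\zeta(y,\cdot)\|_{L^2(\R)}^2$, bounded uniformly in $y$ by Assumption~\ref{a:zeta}, together with the bounds on $\sigma_\pm$, $\partial_x\sigma_\pm$, $\partial_y\sigma_\pm$ of Assumption~\ref{a:sigma}, to get $\|\cC_1(u)\|_{\HS(\cU;\H^1)}\le K_R$ and, via \ref{ai:sigmalipu}--\ref{ai:sigmalipp}, the local Lipschitz property --- this is essentially the content of Lemma~\ref{lem:noiselip}.

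With $(\cA,\cC_2)$ in the stochastic maximal $L^2$-regularity class and $\cB$, $\cC_1$ locally Lipschitz in the required sense, the abstract local existence result of the appendix applies to the initial datum $X_0\in\H^1=(\L^2,\DA)_{1/2,2}$: one truncates $\cB$ and $\cC_1$ outside an $\H^1$-ball of radius $R$ to make them globally Lipschitz, solves the truncated equation globally by a fixed point in $L^2(0,T;\DA)\cap C([0,T];\H^1)$ over short, then concatenated, time intervals, lets $\tau_R$ be the first exit time of the solution from the ball, and puts $\tau:=\sup_R\tau_R$ and $X:=X^R$ on $\llbrak 0,\tau_R\llbrak$; consistency $X^R=X^{R'}$ on $\llbrak 0,\tau_R\wedge\tau_{R'}\llbrak$ follows from uniqueness of the truncated equations and yields the unique maximal strong solution with trajectories a.s.\ in $L^2(0,\tau;\DA)\cap C([0,\tau);\H^1)$. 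I expect the linear step to be the main obstacle: because of the gradient appearing in the noise coefficient $\cC_2$ the equation sits at the borderline of stochastic maximal regularity, and one is on the admissible side only thanks to the exact cancellation of the $\sigma_*^2$-gradient terms produced by the additional second-order operator $\tfrac12\sigma_*^2\Delta_\pm$ in $\cA$, together with the boundary terms being controllable once $c>\max\{\eta_+\k_+^2,\eta_-\k_-^2\}$. A secondary subtlety, already anticipated in Assumptions~\ref{a:sigma} and~\ref{a:zeta}, is that one needs one extra derivative of control on $\sigma_\pm$ and $\zeta$ so that $\cC_1$ takes values in $\HS(\cU;\H^1)$ and not merely in $\HS(\cU;\L^2)$, as is required for a stochastic perturbation of the linear part in the $L^2$-setting.
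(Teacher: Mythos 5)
Your overall strategy---recasting \eqref{eq:SEEqG} in the abstract framework of the appendix and checking that $\cB:\H^1\to\L^2$ and $\cC_1:\H^1\to\HS(\cU;\H^1)$ are Lipschitz on bounded sets---matches the paper, and those verifications are essentially Lemmas~\ref{lem:driftlip} and~\ref{lem:noiselip}(i). The genuine gap is in your treatment of the gradient noise $\cC_2$. You absorb it into the linear part via the $\L^2$-level cancellation $2\scal{\cA u}{u}{\L^2}+\norm{\cC_2(u)}{\HS(\cU;\L^2)}^2\leq-\delta\norm{u}{\H^1}^2+K$. First, this coercivity (strong stochastic parabolicity) criterion is not the one ``recalled in the appendix'': Theorem~\ref{thm:eucSEEa} instead requires $\cC_2$ to satisfy Assumption~\ref{a:Cabst}, i.e.\ $\norm{\cC_2(u)-\cC_2(v)}{\frac12}\leq L_C\norm{u-v}{1}+\tilde L_C\norm{u-v}{\frac12}$ with $M_2^W L_C<1$. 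Second, and more importantly, even if you import the classical variational framework, the $\L^2$-level estimate only delivers solutions in $L^2(0,\tau;\H^1)\cap C([0,\tau);\L^2)$ --- one derivative short of the asserted regularity $L^2(0,\tau;\DA)\cap C([0,\tau);\H^1)$. What is actually needed is the analogous estimate one rung higher, namely that $\partial_x$ maps $\dom(\Delta_\pm)$ into $\dom((c-\eta_\pm\Delta_\pm)^{1/2})$ with $\norm{(c-\eta\Delta)^{1/2}\partial_x u}{L^2}\leq\eta^{-1/2}\norm{(c-\eta\Delta)u}{L^2}$. This is the paper's Lemma~\ref{lem:KatoOneSided}(ii), and it is exactly here --- not in the $\L^2$ coercivity, where the Robin boundary terms already carry a favourable sign for any $c>0$ --- that the hypothesis $c>\max\{\eta_+\kappa_+^2,\eta_-\kappa_-^2\}$ and the first-order boundary conditions enter; the estimate fails for Dirichlet conditions (Remark~\ref{rmk:MaxRegDirichlet}). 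Your proposal uses this hypothesis only where it is not needed, which signals that its real role has been missed.

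At that higher level the cancellation is no longer exact: one obtains the Lipschitz constant $L_*=\sigma_*\bigl((\eta_+\wedge\eta_-)+\tfrac12\sigma_*^2\bigr)^{-1/2}<\sqrt2$ for $\cC_2:\DA\to\HS(\cU;E_{1/2})$, and the conclusion then hinges on the explicit bound $M_2^W\leq 1/\sqrt2$ so that condition~\eqref{eq:para} holds; this is the genuinely borderline point (Remark~\ref{rmk:para}). Since your argument never establishes this second-order estimate nor verifies Assumption~\ref{a:Cabst} for $\cC_2$, the core difficulty of the theorem is not resolved. The remainder of your plan (self-adjointness of $\cA$, identification of the trace space with $\H^1$, truncation, fixed point, maximality) is the standard mechanism behind Theorem~\ref{thm:eucSEEa} and is fine once the above is supplied.
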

\begin{rmk}\label{rmk:isometry}
  To translate the theorem into the SPDE framework of Section~\ref{sec:spde}, we identify $\L^2$ with $L^2(\R)\oplus \R$ using the isometric isomorphism
  \[ \iota: \L^2 \to L^2(\R) \oplus \R,\quad (u_1,u_2,p) \mapsto (u_1\1_{\R_+} + u_2(-(.)) \1_{\R_-}, p).\]  
  In fact, (stochastic) integration can be interchanged with linear continuous operations and we deduce the integral equations for~\eqref{eq:spde} by applying $\iota$ to~\eqref{eq:SEEqG}.
\end{rmk}
\begin{rmk}
  Without much effort, one could replace $\cI$ by any other function which is Lipschitz on bounded sets from $\H^1$ into $\R^n$, and take $\varrho:\R^n\to \R$ locally Lipschitz. In particular, the drift term of $p_*$ in~\eqref{eq:smbp2} might depend also on $p_*$.
\end{rmk}

\subsection{Proof of Theorem~\ref{thm:euSEE}}
\label{ssec:proofeuc}
Theorem~\ref{thm:euSEE} will follow from Theorem~\ref{thm:eucSEEa} for
$p=2$ and $E:= \L^2$, so we just have to verify that Assumption~\ref{a:Aabst},~\ref{a:Babst} and~\ref{a:Cabst} are fulfilled. By diagonal structure, the operator $\cA$ inherits the regularity properties of the Laplacian. 
\begin{lem}\label{lem:linsa}
  $(-\cA,\dom(\cA))$ is positive self-adjoint on $\L^2$.
\end{lem}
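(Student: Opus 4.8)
The operator $\cA$ is block-diagonal up to the bounded perturbation $-c\,\Id$: it acts as $(\eta_\pm + \tfrac12\sigma_*^2)\Delta_\pm$ on the first two components and as $0$ on the one-dimensional component $\R$, and then subtracts $c$ times the identity. Since self-adjointness, positivity, and the spectral bound are all stable under bounded symmetric perturbations and direct sums, the plan is to reduce everything to the single building block $\Delta_\pm$ on $L^2(\R_+)$ with the Robin domain $\dom(\Delta_\pm) = \{u\in L^2(\R_+)\,\vert\,\ddx u(0) = \kappa_\pm u(0)\}$ — or rather, to the shifted operator $-(\eta_\pm+\tfrac12\sigma_*^2)\Delta_\pm + c$, whose positivity is exactly what forces the hypothesis $c > \max\{\eta_+\kappa_+^2,\eta_-\kappa_-^2\}$ in Theorem~\ref{thm:euSEE}.

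First I would recall (or quickly verify) that $\Delta_\pm$ with the above Robin domain is self-adjoint on $L^2(\R_+)$: this is the standard computation via integration by parts, $\scal{\Delta u}{v}{} - \scal{u}{\Delta v}{} = [u'\bar v - u\bar v']_0^\infty$, where the boundary term at $\infty$ vanishes for $H^2$ functions and the term at $0$ vanishes precisely because both $u,v$ satisfy $u'(0) = \kappa_\pm u(0)$ (here $\kappa_\pm\geq 0$ is real, so the Robin condition is symmetric); combined with the fact that this symmetric operator has no room to extend — e.g. $\Id - \Delta_\pm$ is surjective by the standard ODE/variational argument — one gets self-adjointness. Next, the associated quadratic form is $\mathfrak{a}_\pm(u,u) = \int_{\R_+} |u'|^2\,dx + \kappa_\pm |u(0)|^2 \geq 0$, so $-\Delta_\pm \geq 0$; scaling by $\eta_\pm + \tfrac12\sigma_*^2 > 0$ preserves this. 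The only subtlety is that I claim more than positivity of $-\cA$ is needed downstream, namely strict positivity (invertibility), which is where $c$ enters — but for the statement of Lemma~\ref{lem:linsa} as written, "positive self-adjoint" should be read as $-\cA \geq 0$, and since $-c\,\Id$ only shifts the spectrum down by $c$, while the quadratic form of $-\cA$ on $\dom(\cA)$ is $\sum_\pm (\eta_\pm+\tfrac12\sigma_*^2)\big(\|u_\pm'\|^2 + \kappa_\pm|u_\pm(0)|^2\big) + c\|u\|_{\L^2}^2 \geq c\|u\|_{\L^2}^2 > 0$, we in fact get uniform positivity $-\cA \geq c\,\Id$ for free.

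Assembling: $-\cA = \mathrm{diag}\big(-(\eta_++\tfrac12\sigma_*^2)\Delta_+,\,-(\eta_-+\tfrac12\sigma_*^2)\Delta_-,\,0\big) + c\,\Id$ is a direct sum of self-adjoint operators plus a bounded self-adjoint operator on the Hilbert space $\L^2$; a (possibly unbounded) direct sum of self-adjoint operators on orthogonal subspaces is self-adjoint with domain the algebraic direct sum, which is exactly $\dom(\cA) = \dom(\Delta_+)\times\dom(\Delta_-)\times\R$, and adding $c\,\Id$ (bounded, symmetric, everywhere defined) preserves self-adjointness and only shifts the spectrum. Hence $-\cA$ is self-adjoint on $\dom(\cA)$ with $\sigma(-\cA)\subseteq [c,\infty)$, in particular positive.

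The computations here are entirely routine; I do not expect a genuine obstacle. The one point requiring a line of care is handling the boundary term in the integration by parts at $x=0$ under the Robin condition and confirming symmetry holds there — but since $\kappa_\pm$ are real and nonnegative this is immediate, and the $\kappa_\pm|u(0)|^2$ contribution to the form is $\geq 0$ rather than creating a negative part. If one instead wanted the sharper statement without the shift $c$ (pure Robin Laplacian $-\Delta_\pm \geq 0$ but not strictly positive), nothing changes for self-adjointness; the strict lower bound is what the constant $c$ buys and is used in the later sections, not in this lemma.
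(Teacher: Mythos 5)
Your argument is correct and is exactly the standard reasoning the paper implicitly appeals to (it offers no written proof beyond the remark that $\cA$ inherits the properties of the Laplacian by its diagonal structure): self-adjointness and nonnegativity of each Robin Laplacian $-\Delta_\pm$ via integration by parts and the form $\|u'\|_{L^2}^2+\kappa_\pm|u(0)|^2\geq 0$, stability under orthogonal direct sums and the bounded symmetric shift $c\,\Id$, giving $-\cA\geq c\,\Id$. You are also right that only $c>0$ is needed here, the sharper hypothesis $c>\max\{\eta_+\kappa_+^2,\eta_-\kappa_-^2\}$ being used later (in Lemma~\ref{lem:KatoOneSided}), not in this lemma.
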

Hence, its fractional powers $(-\cA)^\alpha$ can be used to define the inter- and extrapolation spaces, see Appendix~\ref{appendix}, for $\alpha \in \R$,
\begin{equation}
  \label{eq:2}
  E_\alpha := \dom((-\cA)^\alpha), \qquad \norm{u}{\alpha} := \norm{(-\cA)^\alpha u}{\L^2},\; u\in E_\alpha.
\end{equation}
Note that $E_\alpha$ are again Hilbert spaces. We recall the following identities, with equivalence of norms, 
\begin{equation}
  \label{eq:Id}
  E_\alpha =
  \begin{cases}
    \H^{2\alpha},& \alpha \in [0,\sfrac34),\\
    \left\{u \in \H^{2\alpha}\,\vert\, \ddx u_1(0) = \kappa_+ u_1(0),\, \ddx u_2(0) = \kappa_- u_2(0) \right\},& \alpha \in (\sfrac34,1].
  \end{cases}
\end{equation}
\begin{rmk}
  This well-known result was proven by Grisvard in~\cite{grisvardCara}. However, the proof is only given for bounded domains, but works the same (even slightly easier) for half-spaces. A very general versions of this result for half spaces involving also Sobolev and Besov spaces in infinite dimensions is Theorem 4.9.1 in~\cite{amannFunctionSpaces}.
\end{rmk}

\begin{lem}\label{lem:driftlip}
  $\cB: \H^1 \to \L^2$ is Lipschitz continuous on bounded sets. 
\end{lem}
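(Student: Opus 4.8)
The plan is to decompose $\cB$ into its three components as written in~\eqref{eq:coefBG} and estimate each separately, using the Sobolev embedding $H^1(\R_+)\hookrightarrow BUC(\R_+)$ in one dimension, which gives $\norm{u}{\infty}\lesssim\norm{u}{H^1}$ and in particular uniform bounds on $u_1,u_2,p$ and on the traces $\cI(u)$ over any bounded set of $\H^1$. Fix $R>0$ and work with $u=(u_1,u_2,p)$, $\tilde u=(\tilde u_1,\tilde u_2,\tilde p)$ in the ball $\{\norm{u}{\H^1}\le R\}$; all Lipschitz constants below may depend on $R$.

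First I would handle the Nemytskii-type terms $x\mapsto\mu_\pm(\pm x,p,u_i(x),\pm\ddx u_i(x))$. Writing $\mu_+(x,p,u_1,\ddx u_1)-\mu_+(x,\tilde p,\tilde u_1,\ddx\tilde u_1)$ and inserting the intermediate point $\mu_+(x,p,\tilde u_1,\ddx\tilde u_1)$, the first difference is controlled by Assumption~\ref{a:mu}.\ref{ai:mulipu}: it is bounded by $L_R(a_R(x)+\abs{\ddx u_1(x)}+\abs{\ddx\tilde u_1(x)})\abs{u_1(x)-\tilde u_1(x)}+L_R\abs{\ddx u_1(x)-\ddx\tilde u_1(x)}$. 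Taking $L^2(\R_+)$-norms, the term with $\abs{u_1-\tilde u_1}$ is estimated by pulling $\norm{u_1-\tilde u_1}{\infty}\lesssim\norm{u-\tilde u}{\H^1}$ out of the integral and noting $a_R+\abs{\ddx u_1}+\abs{\ddx\tilde u_1}\in L^2$ with norm controlled by $R$; the remaining term is directly $\le L_R\norm{u-\tilde u}{\H^1}$. The $p$-variable difference $\mu_+(x,p,\tilde u_1,\ddx\tilde u_1)-\mu_+(x,\tilde p,\tilde u_1,\ddx\tilde u_1)$ is handled identically using Assumption~\ref{a:mu}.\ref{ai:mulipp}, with $\abs{p-\tilde p}\le\norm{u-\tilde u}{\H^1}$ pulled out and $a_R(x)+b_R(\abs{\tilde u_1(x)}+\abs{\ddx\tilde u_1(x)})\in L^2$ of norm $\lesssim R$. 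The $\mu_-$-component is symmetric after the reflection $x\mapsto-x$, which is an isometry on $L^2(\R_+)$.

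Next the scalar (third) component $\varrho(\cI(u))-\varrho(\cI(\tilde u))$: since $\cI:\H^1\to\R^2$ is linear and bounded and maps the $R$-ball into a fixed compact set of $\R^2$, local Lipschitz continuity of $\varrho$ (Assumption~\ref{a:rho}) gives $\abs{\varrho(\cI(u))-\varrho(\cI(\tilde u))}\lesssim\abs{\cI(u)-\cI(\tilde u)}\lesssim\norm{u-\tilde u}{\H^1}$. For the transport-type term $\varrho(\cI(u))\overline\nabla u-\varrho(\cI(\tilde u))\overline\nabla\tilde u$, add and subtract $\varrho(\cI(u))\overline\nabla\tilde u$: one piece is $\abs{\varrho(\cI(u))}\,\norm{\overline\nabla u-\overline\nabla\tilde u}{\L^2}\lesssim R\norm{u-\tilde u}{\H^1}$ using boundedness of $\varrho$ on the relevant compact, and the other is $\abs{\varrho(\cI(u))-\varrho(\cI(\tilde u))}\,\norm{\overline\nabla\tilde u}{\L^2}\lesssim\norm{u-\tilde u}{\H^1}\cdot R$, the last factor being the $\H^1$-bound on $\tilde u$. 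The $c\,\mathrm{Id}$ term is linear and bounded. Summing all contributions yields the claim.

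The main obstacle is bookkeeping rather than conceptual: one must make sure, in the terms where a factor like $\abs{\ddx u_1-\ddx\tilde u_1}$ or $\norm{\overline\nabla u-\overline\nabla\tilde u}{\L^2}$ appears, that the accompanying factor is genuinely in $L^\infty$ or uniformly bounded on the $R$-ball — this is exactly why Assumption~\ref{a:mu}.\ref{ai:mulipu} is stated with the symmetric $(a_R+\abs z+\abs{\tilde z})$ weight on the $\abs{y-\tilde y}$ term but only a bare $\abs{z-\tilde z}$ on the derivative difference. The one-dimensional Sobolev embedding $H^1\hookrightarrow L^\infty$ is what makes the whole argument go through; in higher dimensions it would fail, but here it is available and everything reduces to Cauchy–Schwarz and the triangle inequality.
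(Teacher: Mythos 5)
Your proposal is correct and follows essentially the same route as the paper's proof: split $\cB$ into the Nemytskii terms, the trace/transport term and the linear part, insert the intermediate point to separate the $(y,z)$-increment (Assumption~\ref{a:mu}.\ref{ai:mulipu}) from the $p$-increment (Assumption~\ref{a:mu}.\ref{ai:mulipp}), and close the estimates with the one-dimensional Sobolev embedding $H^1\hookrightarrow BUC$. No gaps.
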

\begin{proof}
  It is well known~\cite{lionsmagenes1} that the trace operator $u\mapsto u(0)$ is linear and continuous from $H^1(\R_+)$ into $\R$. This translates to $\cI$, as a mapping from $\H^1$ into $\R^2$, so that $\varrho\circ \cI$ is Lipschitz on bounded sets from $\H^1$ into $\R$. Moreover, $\overline{\nabla} u$ is clearly Lipschitz from $\H^1$ into $\L^2$ and thus, their product is Lipschitz continuous on bounded sets. 
  
  Let $\mu := \mu_+$ or $\mu:= \mu_-(-(.),.,.,-(.))$. It remains to prove that the Nemytskii operator 
  \[N_\mu(u;p) := \mu(.,p,u(.), \ddx u(.))\]
  is Lipschitz on bounded sets from $H^1(\R_+)\oplus \R$ into $L^2(\R_+)$. Let $u\in H^1(\R_+)$, $p\in \R$, then due to Assumption~\ref{a:mu}.\ref{ai:mugrowths},
  \begin{multline}\label{eq:Nmubd}
    \int_0^\infty \mu(x,p,u(x),\ddx u(x))^2 \d x \leq \\
    \leq 3 \sup_{x\in \R} b(u(x),p)^2 \int_0^\infty  \abs{a(x)}^2 + \abs{u(x)}^2 + \abs{\ddx u(x)}^2 \d x,
  \end{multline}
  which is finite since $u$ is bounded by Sobolev embeddings. Now, for $R>0$ and $u$, $v \in H^1(\R_+)$, $p$, $q\in \R$ such that 
  \[ \norm{u}{H^1},\,\norm{v}{H^1},\,\abs{p},\,\abs{q} \leq R\]
  Assumption~\ref{a:mu}.\ref{ai:mulipu} yields
  \begin{equation*}
    \norm{N_\mu(u;p) - N_\mu(v;p)}{L^2}^2 \leq 3 L_R^2\left( \norm{a_R}{L^2}^2 + 2R^2\right)\norm{u-v}{\infty}^2 + L_R^2 \norm{\ddx u - \ddx v}{L^2}^2.
  \end{equation*}
  On the other hand, by~\ref{a:mu}.\ref{ai:mulipp},
  \begin{multline*}
    \norm{N_\mu(v,p) - N_\mu(v,q)}{L^2}^2  \\ \leq \left(\norm{a_R}{L^2} + \abs{b_R} \left(\norm{u}{L^2} + \norm{\ddx u}{L^2}\right)\right)^2 \abs{p-q}^2\\
      \leq \left(\norm{a_R}{L^2} + 2\abs{b_R} R\right)^2\abs{p-q}^2.
  \end{multline*}
  Combining the latter two equations with~\eqref{eq:Nmubd} we get that $N_\mu$ is Lipschitz on bounded sets for $\mu = \mu_+$ and $\mu = \mu_-(-(.),.,.,.)$, respectively. Here, we again used the Sobolev embedding $H^1\hookrightarrow BUC$. 
\end{proof}
\begin{lem}\label{lem:KatoOneSided}
  Let $\Delta$ be the Laplacian on $L^2(\R_+)$ with domain
  \[ \dom(\Delta):= \setc{ u\in H^2}{ \ddx u(0) = \kappa u(0)},\;\kappa \geq 0.\]
  Then, for all $\eta>0$, $c>0$ and $u\in H^1(\R_+)$ it holds that
  \begin{equation}
    \label{eq:KatoA1}
    \norm{\ddx u}{L^2(\R_+)} \leq \frac1{\sqrt{\eta}} \norm{(c-\eta \Delta)^{\sfrac12} u}{L^2(\R_+)}.
  \end{equation}
  Moreover, if $c>\eta\kappa^2$, then it holds for all $u\in \dom(\Delta)$,
  \begin{equation}
    \label{eq:KatoA2}
    \norm{(c-\eta \Delta)^{\sfrac12} \ddx u}{L^2(\R_+)} \leq \frac1{\sqrt{\eta}} \norm{(c-\eta \Delta) u}{L^2(\R_+)}.
  \end{equation}  
\end{lem}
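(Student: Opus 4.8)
The plan is to reduce both inequalities to a single quadratic-form identity on the half-line together with one elementary algebraic estimate. Since $\kappa\geq 0$, the Robin realization $-\Delta$ on $L^2(\R_+)$ is nonnegative and self-adjoint, with closed associated form $\mathfrak t[v]=\norm{\ddx v}{L^2(\R_+)}^2+\kappa\abs{v(0)}^2$ and form domain $H^1(\R_+)$; this is the half-line analogue of the Grisvard identification already used for~\eqref{eq:Id}, here only in the regime of exponent $\sfrac12<\sfrac34$, so that no boundary condition is imposed at the form level. Consequently $c-\eta\Delta$ is strictly positive and self-adjoint, $\dom\!\big((c-\eta\Delta)^{\sfrac12}\big)=H^1(\R_+)$, and for every $v\in H^1(\R_+)$,
\begin{equation}\label{eq:katoform}
  \norm{(c-\eta\Delta)^{\sfrac12}v}{L^2(\R_+)}^2 = c\,\norm{v}{L^2(\R_+)}^2 + \eta\,\norm{\ddx v}{L^2(\R_+)}^2 + \eta\kappa\,\abs{v(0)}^2 .
\end{equation}
Estimate~\eqref{eq:KatoA1} is then immediate: apply~\eqref{eq:katoform} with $v=u\in H^1(\R_+)$ and discard the nonnegative terms $c\norm{u}{L^2(\R_+)}^2$ and $\eta\kappa\abs{u(0)}^2$.

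For~\eqref{eq:KatoA2} I would take $u\in\dom(\Delta)$, note that $\ddx u\in H^1(\R_+)$, and apply~\eqref{eq:katoform} to $v=\ddx u$; the boundary condition turns $\eta\kappa\abs{\ddx u(0)}^2$ into $\eta\kappa^3\abs{u(0)}^2$. A short integration by parts on $\R_+$ — the boundary term at $+\infty$ vanishing because $u\in H^2(\R_+)$ makes $u$ and $\ddx u$ lie in $H^1(\R_+)$ and hence decay, the one at $0$ being controlled by the Robin condition — evaluates $\norm{(c-\eta\Delta)u}{L^2(\R_+)}^2=\norm{cu-\eta\,\ddxx u}{L^2(\R_+)}^2$ to
\begin{equation*}
  c^2\norm{u}{L^2(\R_+)}^2 + 2c\eta\,\norm{\ddx u}{L^2(\R_+)}^2 + 2c\eta\kappa\,\abs{u(0)}^2 + \eta^2\norm{\ddxx u}{L^2(\R_+)}^2 .
\end{equation*}
Subtracting the two expressions, the difference $\tfrac1\eta\norm{(c-\eta\Delta)u}{L^2(\R_+)}^2-\norm{(c-\eta\Delta)^{\sfrac12}\ddx u}{L^2(\R_+)}^2$ collapses to $\tfrac{c^2}{\eta}\norm{u}{L^2(\R_+)}^2+c\,\norm{\ddx u}{L^2(\R_+)}^2+\kappa\,(2c-\eta\kappa^2)\abs{u(0)}^2$, which is nonnegative whenever $2c\geq\eta\kappa^2$, and in particular under the hypothesis $c>\eta\kappa^2$.

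There is no deep obstacle here; the work is entirely in the bookkeeping. The two points deserving care are the justification of~\eqref{eq:katoform} — self-adjointness and nonnegativity of the Robin Laplacian on $\R_+$, and the identification of its form domain with all of $H^1(\R_+)$, with no trace condition, since $\sfrac12<\sfrac34$ — and the vanishing of the boundary term at $+\infty$ in the integration by parts, which follows from the Sobolev embedding $H^2(\R_+)\hookrightarrow BUC^1(\R_+)$ together with the decay at $+\infty$ of $H^1(\R_+)$-functions. It is worth noting that the stated threshold $c>\eta\kappa^2$ is strictly stronger than the $2c\geq\eta\kappa^2$ that the computation actually requires, and coincides with the constant already imposed in Theorem~\ref{thm:euSEE} and in~\eqref{eq:Id}, so no new constraint on $c$ is introduced.
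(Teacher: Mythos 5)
Your proof is correct and follows essentially the same route as the paper: establish the quadratic-form identity $\norm{(c-\eta\Delta)^{\sfrac12}v}{L^2}^2 = c\norm{v}{L^2}^2+\eta\norm{\ddx v}{L^2}^2+\eta\kappa\abs{v(0)}^2$ on $H^1(\R_+)$, read off~\eqref{eq:KatoA1} by discarding nonnegative terms, and apply the same identity to $\ddx u$ for~\eqref{eq:KatoA2}. The only differences are organizational: the paper rewrites the second step through $-\scal{(c-\eta\Delta)u}{\Delta u}{L^2}$ and treats $\kappa=0$ separately, whereas your fully expanded integration-by-parts bookkeeping handles both cases uniformly and incidentally shows that the weaker threshold $2c\geq\eta\kappa^2$ already suffices.
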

\begin{rmk}\label{rmk:MaxRegDirichlet}
  In the second statement, it is crucial that $\ddx$ maps $\dom(\Delta)$ into $\dom((c-\eta\Delta)^{\sfrac12})$. This does not hold true anymore for Dirichlet boundary conditions. 
\end{rmk}
\begin{proof}
  \begin{enumerate}[label={Step {\Roman*}:}, fullwidth]
  \item First note that $\dom((c-\Delta)^{\sfrac12}) = H^1(\R_+)$ due to first order boundary conditions. Since $\Delta$ is self adjoint, the same holds true for $(c-\Delta)^{\sfrac12}$, so that for all $u\in \dom(\Delta)$,
    \begin{equation}
      \label{eq:8}
      \norm{(c-\eta \Delta)^{\sfrac12} u}{L^2}^2 = \langle (c-\eta\Delta) u,u\rangle_{L^2} = c \norm{u}{L^2}^2 - \eta \langle \Delta u, u\rangle_{L^2}. 
    \end{equation}
    With integration by parts, we obtain
    \begin{equation}
      \label{eq:A12}
      \norm{(c-\eta\Delta)^{\sfrac12} u}{L^2}^2   = c \norm{u}{L^2}^2 + \eta \norm{\ddx u}{L^2}^2 + \eta\kappa \abs{u(0)}^2.
    \end{equation}
    For the last equality, we just used integration by parts and the fact that $u\in \dom(\Delta)$. Recall that $\dom(\Delta)$ is dense in $\dom((c-\Delta)^{\sfrac12})$, and 
    \[\dom((c-\eta \Delta)^{\sfrac12}) = H^1(\R_+)\hookrightarrow BUC(\R_+),\]
    so that~\eqref{eq:A12} holds true for all $u\in H^1(\R_+)$. 
  \item Now, assume that $c>\eta\kappa^2$. Let $u\in \dom(\Delta)$ and apply~\eqref{eq:A12} to $\ddx u$. If $\kappa>0$, this reads as
    \begin{multline}
      \label{eq:A12D}
      \norm{(c-\eta \Delta)^{\sfrac12} \ddx u}{L^2}^2=  c \norm{\ddx u}{L^2}^2 + \eta \norm{\Delta u}{L^2}^2 + \eta\kappa \abs{\ddx u(0)}^2\\
      = - \scal{(c-\eta\Delta) u}{\Delta u}{L^2} + \left(\eta\kappa - \frac{c}{\kappa}\right)\abs{\ddx u(0)}^2\\
      = \frac1{\eta} \norm{(c-\eta\Delta)u}{L^2}^2 - \frac{c}{\eta} \scal{(c-\eta\Delta)u}{u}{L^2} +  \left(\eta\kappa - \frac{c}{\kappa}\right)\abs{\ddx u(0)}^2.
    \end{multline}
    The second equality follows again by integration by parts. 
    We get~\eqref{eq:KatoA2}, since $-\Delta$ is non-negative self-adjoint and $u\in \dom(A)$. 

    For $\kappa= 0$,~\eqref{eq:A12} still holds true and the computation in~\eqref{eq:A12D} reduces to,
    \begin{equation}
      \norm{(c-\eta \Delta)^{\sfrac12} \ddx u}{L^2}^2 =- \scal{(c-\eta\Delta) u}{\Delta u}{L^2}  \leq \frac1{\eta} \norm{(c-\eta\Delta)u}{L^2}^2. \qedhere
    \end{equation}
  \end{enumerate}    
\end{proof}

A direct consequence of fundamental theorem of calculus is, see also \cite[Proof of Lemma 5.1]{sowersEtAl} or \cite[Appendix A]{lunardiInterpol},
\[ u(x) = \int_{x}^{x+1} u(y) \d y - \int_{x}^{x+1} (x+1-y) \nabla u(y) \d y,\]
for all $u\in H^1(\R_+)$, $x>0$, which yields
\begin{equation}
  \sup_{x>0}\abs{u(x)} \leq \sqrt{2} \norm{u}{H^1}.
\end{equation}
Thus, the equality in~\eqref{eq:A12} implies the following two-sided estimate.
\begin{cor}
  With the notation of Lemma~\ref{lem:KatoOneSided}, for all $u\in H^1(\R_+)$,
  \[\sqrt{c\wedge \eta} \norm{u}{H^1} \leq \norm{(c-\eta \Delta)^{\sfrac12}u}{L^2} \leq \left(\sqrt{c\vee \eta} + \sqrt{2\eta\kappa}\right)\norm{u}{H^1}.\]
\end{cor}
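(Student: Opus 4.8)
The plan is to read the two bounds straight off the norm identity derived in the proof of Lemma~\ref{lem:KatoOneSided}. Recall from~\eqref{eq:A12} that for every $u\in H^1(\R_+)$,
\[ \norm{(c-\eta\Delta)^{\sfrac12} u}{L^2}^2 = c\,\norm{u}{L^2}^2 + \eta\,\norm{\ddx u}{L^2}^2 + \eta\kappa\,\abs{u(0)}^2. \]
This identity was first established on $\dom(\Delta)$ and then extended to all of $H^1(\R_+)$ by density, using $\dom((c-\eta\Delta)^{\sfrac12}) = H^1(\R_+)\hookrightarrow BUC(\R_+)$; that extension is the only slightly delicate point and it has already been taken care of in Step~I of the proof of Lemma~\ref{lem:KatoOneSided}.

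For the lower bound I would discard the boundary term $\eta\kappa\abs{u(0)}^2$, which is non-negative since $\kappa\geq 0$, and then estimate
\[ c\,\norm{u}{L^2}^2 + \eta\,\norm{\ddx u}{L^2}^2 \;\geq\; (c\wedge\eta)\bigl(\norm{u}{L^2}^2 + \norm{\ddx u}{L^2}^2\bigr) = (c\wedge\eta)\,\norm{u}{H^1}^2, \]
from which $\sqrt{c\wedge\eta}\,\norm{u}{H^1}\leq \norm{(c-\eta\Delta)^{\sfrac12}u}{L^2}$ follows by taking square roots.

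For the upper bound I would bound the first two terms of the identity by $(c\vee\eta)\,\norm{u}{H^1}^2$ and control the boundary term by the pointwise estimate $\sup_{x>0}\abs{u(x)}\leq\sqrt2\,\norm{u}{H^1}$ established immediately above the corollary from the fundamental theorem of calculus, giving $\eta\kappa\abs{u(0)}^2\leq 2\eta\kappa\,\norm{u}{H^1}^2$. Writing the resulting bound as $\bigl(\sqrt{c\vee\eta}\,\norm{u}{H^1}\bigr)^2 + \bigl(\sqrt{2\eta\kappa}\,\norm{u}{H^1}\bigr)^2$ and using $a^2+b^2\leq(a+b)^2$ for $a,b\geq 0$, the square root of the right-hand side is exactly $\bigl(\sqrt{c\vee\eta}+\sqrt{2\eta\kappa}\bigr)\norm{u}{H^1}$, which completes the proof.

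There is essentially no obstacle here: the corollary is an immediate consequence of~\eqref{eq:A12} together with the elementary Sobolev bound on $\abs{u(0)}$, and the only care needed --- validity of~\eqref{eq:A12} on all of $H^1(\R_+)$ rather than merely on $\dom(\Delta)$ --- is already contained in Lemma~\ref{lem:KatoOneSided}.
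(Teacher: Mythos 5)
Your proof is correct and is exactly the argument the paper intends: the paper simply states that the corollary follows from the identity~\eqref{eq:A12} together with the bound $\sup_{x>0}\abs{u(x)}\leq\sqrt2\,\norm{u}{H^1}$, and you have supplied precisely the omitted details (dropping the nonnegative boundary term for the lower bound, and using $a^2+b^2\leq(a+b)^2$ for the upper bound). No gaps.
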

\begin{rmk}\label{rmk:katosquareroot}
  When replacing $\eta \Delta$ by a general uniformly elliptic operator of second order $A$, we still know that there exists constants $K_0$, $K_1$ such that
  \[ K_0 \norm{u}{H^1} \leq \norm{(-A)^{\sfrac12}u}{L^2} \leq K_1 \norm{u}{H^1}\]
  for all $u\in \dom((-A)^{\sfrac12})$. In fact, this question is known as Kato's square root problem and was solved by Auscher et al, see \cite[Thm 6.1]{katosqroot}. This theory is strongly based on the bounded $H^\infty$-calculus of $-A$. However, as can be seen in Appendix~\ref{appendix}, just to know plain existence of such constants without exact bounds might not be sufficient for the discussion of existence for stochastic evolution equations.
\end{rmk}
\begin{lem}\label{lem:noiselip}$\;$
  \begin{enumerate}[label=(\roman*)]
  \item $\cC_1: \H^1 \to \HS(\cU; \H^1)$ is Lipschitz continuous on bounded sets.
  \item $\cC_2$ is Lipschitz continuous from $\dom(\cA)$ into $\HS(\cU; E_{\frac12})$. More precisely, there exists $L_*<\sqrt2$ such that for all $u$, $v\in \dom(\cA)$,
    \begin{equation}
      \label{eq:LipNormC2}
      \norm{\cC_2(u) - \cC_2(v)}{\HS(\cU;E_{\sfrac12})} \leq L_*\norm{-\cA (u-v)}{\L^2}.
    \end{equation}
  \end{enumerate}
\end{lem}
\begin{proof}
  For the first part, we use the results in \cite[Appendix A]{SFBP}, to get that $u\mapsto N_\sigma(u;p) := \sigma(.,p,u(.))$ is Lipschitz on bounded sets on $H^1(\R_+)$, for each $p\in \R$. 
  Here, we let $\sigma:= \sigma_+$ or $\sigma := \sigma_-(-.,.,.)$. Moreover, the weak derivative of $N_\sigma(u;p)$ is
  \[ \tfrac{\d}{\d x}  N_\sigma(u;p)(x) = \ddx \sigma(x,p,u(x)) + \ddy \sigma(x,p,u(x)) \ddx u(x),\]
  and by Assumption~\ref{a:sigma},
  \begin{multline*}
    \norm{\ddy \sigma(.,p,u(.))u(.) - \ddy \sigma(.,\tilde p,u(.))u(.)}{L^2}\leq  \\ \leq \norm{u}{L^2} \sup_{x\geq 0}\abs{\ddy \sigma(x,p,u(x)) - \ddy \sigma(x,\tilde p, u(x))} 
    \leq  \norm{u}{L^2} \tilde b_R \abs{p-\tilde p},
  \end{multline*}
  where $R>0$ is such that $\max\{\norm{u}{\infty}, p, \tilde p\}\leq R$. Moreover,
  \begin{equation*}
    \norm{N_\sigma(u;p) - N_\sigma(u;\tilde p)}{L^2} \leq b_R \norm{a_R + \abs{u}}{L^2} \abs{p-\tilde p},
  \end{equation*}
  and the same estimates are valid when replacing $\sigma$ by $\ddx \sigma$. Hence, we get that $p\mapsto N_{\sigma}(u;p)$ is Lipschitz on bounded sets from $\R$ into $H^1(\R_+)$, so that $(u,p)\mapsto N_\sigma(u;p)$ is Lipschitz on bounded sets from $H^1(\R_+) \oplus \R$ into $H^1(\R_+)$.
  
  From the proof of Lemma B.4 in~\cite{SFBP} we extract the estimate 
  \begin{multline}
    \label{eq:HSestH1}
    \norm{N_\sigma(u;p) T_\zeta}{\HS(L^2(\R); H^1)}\leq \\
    \leq K \norm{N_\sigma(u;p)}{H^1} \sup_{z\in \R}\left(\norm{\zeta(z,.)}{L^2(\R)}+\norm{\ddx \zeta(z,.)}{L^2(\R)}\right).
  \end{multline}
  Writing $\zeta_{y}(x,.) := \zeta(x+y,.) - \zeta(x,.)$, we get for all $w\in L^2(\R)$, $x$, $y$, $z\in \R$,
  \[T_\zeta w(x+y) - T_\zeta w(x+z) = T_{\zeta_{y-z}} w(x),\quad \forall w\in L^2(\R),\;x\in \R.\]
  Using Assumption~\ref{a:zeta} and fundamental theorem of calculus, one shows that 
  \begin{align*}
    \sup_{z\in \R} \norm{\zeta_x(z,.)}{L^2(\R)} &\leq \abs{x} \sup_{z\in \R} \norm{\ddx \zeta(z,.)}{L^2(\R)},\\
    \sup_{z\in \R} \norm{\ddx \zeta_x(z,.)}{L^2(\R)} &\leq \abs{x} \sup_{z\in \R} \norm{\ddxx \zeta(z,.)}{L^2(\R)}.
  \end{align*}
  See also~\cite[Lemma B.2]{SFBP} for details. Combining the latter three equations with the first part on $N_\sigma$, we get that $\cC_1$ is, indeed, Lipschitz on bounded sets.
  
  To prove the second part of the lemma, note that for any CONS $(e_k)_{k\in\N}$ of $L^2(\R)$ the family $((0,1), (e_1,0),(e_2,0),...)$ is a CONS of $\cU$. Hence,
  \begin{equation*}
    \norm{\cC_2(u) - \cC_2(v)}{\HS(\cU;E_{\frac12})}^2 = \norm{\cC_2(u)[0,1]- \cC_2(v)[0,1]}{\frac12}^2 .
  \end{equation*}
  By diagonal structure of $\cA$ and $\L^2$ we have
  \[(-\cA)^{\sfrac12} =
  \begin{pmatrix}
    (c - (\eta_++\tfrac12 \sigma_*^2)\Delta_1)^{\frac12} &0&0\\
    0& (c - (\eta_-+\tfrac12 \sigma_*^2)\Delta_2)^{\frac12}&0\\
    0&0&c^{\frac12}
  \end{pmatrix},\]
  so that the second part of Lemma~\ref{lem:KatoOneSided} yields
  \begin{equation*}
    \norm{\cC_2(u) - \cC_2(v)}{\HS(\cU;E_{\sfrac12})} \leq L_* \norm{(-\cA) (u-v)}{\L^2},
  \end{equation*}
  for $L_* := \sigma_* \left((\eta_+\wedge \eta_-) + \tfrac12 \sigma_*^2\right)^{-\frac12} <\sqrt2$.
\end{proof}
Putting things together we get that Assumptions~\ref{a:Aabst},
\ref{a:Babst}, and~\ref{a:Cabst} are fulfilled. Moreover,
\eqref{eq:LipNormC2} and Remark~\ref{rmk:para} show that also
\eqref{eq:para} holds true with $L_B = 0$, $L_C = L_*$ and $p=2$. Thus, application of
Theorem~\ref{thm:eucSEEa} finishes the proof of
Theorem~\ref{thm:euSEE}. \qed

\subsection{Explosion times}
We now formulate and prove Theorem~\ref{thm:tau0tau} and \ref{thm:global} in the framework of stochastic evolution equations on $\L^2$, see also Remark~\ref{rmk:isometry}.
\begin{thm}\label{thm:globalSEE}
  Let $(X,\tau)$ be the unique maximal solution of~\eqref{eq:SEEqG} on $\L^2$ and assume that, in addition to the assumptions of Theorem~\ref{thm:euSEE} also Assumptions~\ref{a:musigmalg} holds true and $\varrho$ is globally bounded. Then, $\tau = \infty$ almost surely. If, moreover, $X_0\in L^{2}(\Omega; \H^1)$, then for all $T>0$ there exists a constant $K>0$ such that
  \begin{align*} \EE\int_0^T \norm{X(s)}{\cA}^2 \d s + \E{\sup_{0\leq t\leq T} \norm{X(s)}{\H^1}^2} \leq K \left(1 + \E{\norm{X_0}{\H^1}^2}\right).\end{align*}
\end{thm}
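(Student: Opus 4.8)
The plan is to obtain an a priori bound $\EE\sup_{t<\tau\wedge T}\norm{X(t)}{\H^1}^2<\infty$ for every $T>0$ and feed it into the blow-up alternative of the local theory (Theorem~\ref{thm:eucSEEa}): since the maximal strong solution satisfies $\limsup_{t\nearrow\tau}\norm{X(t)}{\H^1}=\infty$ a.s.\ on $\{\tau<\infty\}$, such a bound forces $\PP[\tau<\infty]=0$. As $\norm{(-\cA)^{1/2}\cdot}{\L^2}$ is equivalent to $\norm{\cdot}{\H^1}$ (apply the two-sided estimate following Lemma~\ref{lem:KatoOneSided} block-wise), I would work with $\Phi(t):=\norm{(-\cA)^{1/2}X(t)}{\L^2}^2$ and abbreviate $\norm{X}{E_1}:=\norm{(-\cA)X}{\L^2}$, so that $\norm{\cdot}{\cA}^2\simeq\norm{\cdot}{\L^2}^2+\norm{\cdot}{E_1}^2$.

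First I would apply the It\^o formula for strong solutions to $\Phi$ — legitimate because $X\in L^2(0,\tau;\DA)\cap C([0,\tau);\H^1)$ solves \eqref{eq:SEEqG} in $\L^2$, e.g.\ via the Gelfand triple $\DA\hookrightarrow\H^1\hookrightarrow\L^2$ with pairing $\scal{f}{g}{}:=\scal{f}{-\cA g}{\L^2}$ and a localisation away from $\tau$. This produces a drift $-2\norm{X}{E_1}^2+2\scal{\cB(X)}{(-\cA)X}{\L^2}+\norm{(-\cA)^{1/2}\cC(X)}{\HS(\cU;\L^2)}^2$ plus a local martingale $M$ with $\d[M]_t\lesssim\Phi(t)\norm{(-\cA)^{1/2}\cC(X(t))}{\HS(\cU;\L^2)}^2\d t$. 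Under Assumption~\ref{a:musigmalg} the $\sigma_\pm$ have linear growth, so by Lemma~\ref{lem:noiselip}(i) (inspecting its proof) $\norm{(-\cA)^{1/2}\cC_1(X)}{\HS}^2=\norm{\cC_1(X)}{\HS(\cU;\H^1)}^2\leq C(1+\Phi)$; by Lemma~\ref{lem:noiselip}(ii), $\norm{(-\cA)^{1/2}\cC_2(X)}{\HS}\leq L_*\norm{X}{E_1}+\norm{(-\cA)^{1/2}\cC_2(0)}{\HS}$; and since $\mu_\pm$ have linear growth and $\varrho$ is globally bounded, $\norm{\cB(X)}{\L^2}\leq C(1+\norm{X}{\H^1})$. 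Absorbing the cross terms by Young's inequality with small parameters $\epsilon,\delta,\theta>0$ yields
\[\d\Phi(t)\leq\Big[\big(-2+\theta+(1+\epsilon)(1+\delta)L_*^2\big)\norm{X(t)}{E_1}^2+C\big(1+\Phi(t)\big)\Big]\d t+\d M_t.\]
This is the only step using $L_*<\sqrt2$: for $\epsilon,\delta,\theta$ small the bracketed coefficient of $\norm{X(t)}{E_1}^2$ equals $-2\gamma$ for some $\gamma>0$.

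Then I would close the estimate by the standard localisation--BDG--Gronwall routine. Against a localising sequence $\tau_n\nearrow\tau$ (controlling both $\norm{X}{\H^1}$ and $\int\norm{X}{E_1}^2$), taking expectations removes $M$, and the sign of the $E_1$-term gives $\EE\Phi(t\wedge\tau_n)\leq\EE\Phi(0)+C\int_0^t(1+\EE\Phi(s\wedge\tau_n))\d s$; Gronwall then bounds $\EE\Phi(t\wedge\tau_n)$, and reading off the $E_1$-term gives $\gamma\,\EE\int_0^{T\wedge\tau_n}\norm{X(s)}{E_1}^2\d s\leq C_T(1+\EE\Phi(0))$, uniformly in $n$. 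For the supremum I would apply Burkholder--Davis--Gundy to $M$, bound the integrand of $[M]$ by $\Phi\cdot(\gamma\norm{X}{E_1}^2+C(1+\Phi))$, extract $(\sup_{s\leq T\wedge\tau_n}\Phi(s))^{1/2}$, absorb it with a small constant, and control the residual $\EE\int\norm{X}{E_1}^2$ by the bound just obtained; this yields $\EE\sup_{s\leq T\wedge\tau_n}\Phi(s)\leq C_T(1+\EE\Phi(0))$ uniformly in $n$, hence by monotone convergence $\EE\sup_{t<\tau\wedge T}\norm{X(t)}{\H^1}^2\leq C_T(1+\EE\norm{X_0}{\H^1}^2)$. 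When $\EE\norm{X_0}{\H^1}^2<\infty$ this already gives $\tau=\infty$ a.s., and, combined with the $E_1$-integral bound and $\EE\int_0^T\norm{X(s)}{\L^2}^2\d s\leq T\,\EE\sup_{s\leq T}\norm{X(s)}{\H^1}^2$, the displayed estimate. For general $\F_0$-measurable $X_0\in\H^1$, on $\Omega_R:=\{\norm{X_0}{\H^1}\leq R\}\in\F_0$ the solution coincides up to $\tau$ (by local uniqueness) with the one started from $X_0\1_{\Omega_R}\in L^2(\Omega;\H^1)$, so $\tau=\infty$ on each $\Omega_R$, and letting $R\to\infty$ finishes $\tau=\infty$ a.s.

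I expect the main obstacle to be the bookkeeping at this borderline case: one must check that \emph{every} contribution to the coefficient of $\norm{X}{E_1}^2$ other than $L_*^2$ — the $\cC_1$/$\cC_2$ cross term, the constant shift $\cC_2(0)$, and the $\cB$-term — can genuinely be made arbitrarily small through $\epsilon,\delta,\theta$, so that the strict bound $L_*<\sqrt2$ of Lemma~\ref{lem:noiselip}(ii) is exactly what keeps $-2+\theta+(1+\epsilon)(1+\delta)L_*^2$ negative and preserves dissipativity; a secondary point is justifying the It\^o formula for $\Phi$ at the regularity level actually available for the strong solution.
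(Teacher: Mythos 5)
Your proposal is correct, but it takes a genuinely different route from the paper. The paper's proof consists only of verifying hypotheses: it checks, exactly as you do, that under Assumption~\ref{a:musigmalg} and the global boundedness of $\varrho$ the maps $\cB:\H^1\to\L^2$ and $\cC_1:\H^1\to\HS(\cU;\H^1)$ have linear growth (via the Nemytskii estimates \eqref{eq:Nmubd} and \eqref{eq:HSestH1}), and then invokes the abstract black-box result Theorem~\ref{thm:globalSEEabst} (Theorem 5.6(ii),(iii) of van Neerven--Veraar--Weis) with $q=2$, which delivers both $\tau=\infty$ and the moment bound in one stroke; the smallness condition on $\cC_2$ is the one already needed for local existence, $M_2^W L_* \le L_*/\sqrt2 <1$. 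You instead rederive the a priori estimate by hand through an It\^o-formula energy identity for $\norm{(-\cA)^{1/2}X}{\L^2}^2$, Young, Gronwall and BDG. The two are consistent — your dissipativity condition $-2+L_*^2<0$ is precisely $M_2^W L_*<1$ with $M_2^W=1/\sqrt2$, and $L_*^2=\sigma_*^2/((\eta_+\wedge\eta_-)+\tfrac12\sigma_*^2)<2$ holds automatically since $\eta_\pm>0$ — and your bookkeeping of the $\cC_1/\cC_2$ cross terms and of $\cC_2(0)$ (a rank-one, finite Hilbert--Schmidt constant) is sound. What the paper's route buys is brevity and a statement valid for general $p$; what yours buys is a self-contained Hilbert-space argument in which the role of the borderline constant $L_*<\sqrt2$ is completely transparent, at the price of having to justify It\^o's formula for $\Phi$ at the regularity $X\in L^2(0,\tau;\DA)\cap C([0,\tau);\H^1)$ (which works via the triple $E_{1}\hookrightarrow E_{\sfrac12}\hookrightarrow E_0$ applied to $(-\cA)^{1/2}X$, using that $\cC(X)\in L^2(0,\tau_n;\HS(\cU;E_{\sfrac12}))$) and of redoing the localisation in the initial datum that the cited theorem packages away.
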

\begin{proof}
  First, since $\varrho$ is globally bounded, we get that
  \[ (u_1,u_2,p) \mapsto \varrho(\cI(u)) \overline{\nabla} u\]
  has linear growths as a map from $\H^1$ into $\L^2$. From~\eqref{eq:Nmubd} we get the linear growths bound, for all $u\in H^1(\R_+)$, $p\in \R$,
  \[ \int_0^\infty \abs{\mu_{\pm}(\pm x,p, u(x),\ddx u(x))}^2 \d x \leq  3 \norm{b}{L^\infty(\R^2)} \left( \norm{a}{L^2}^2 +\norm{u}{H^1}^2\right).\] 
  Hence, the Nemytskii-operator $N_\mu$, defined in the proof of Lemma~\ref{lem:driftlip}, has linear growths from $H^1\oplus \R$ into $L^2$ and $\cB$ has linear growths from $\H^1$ into $\L^2$. 

  With the same arguments, we get linear growths of $N_\sigma$ from $H^1\oplus \R$ into $L^2$, for $\sigma\in \{\sigma_+,\ddx \sigma_+,\sigma_-, \ddx \sigma_-\}$. Finally, for all $p\in \R$, $u\in H^1(\R_+)$, $x>0$, it holds that
  \[ \abs{\ddy \sigma(x,p,u(x)) \ddx u(x)} \leq \lVert \tilde b \rVert_{L^\infty(\R^2)} \abs{\ddx u(x)}.\]
  The weak derivative of $N_\sigma(u)$ is given by $\ddx \sigma(x,p,u(x)) + \ddy \sigma(x,p,u(x))\ddx u(x)$, and therefore $N_\sigma$ has linear growths from $H^1\oplus \R$ into $H^1$. Using estimate~\eqref{eq:HSestH1} and the structure of $\cC_1$, we refer to the proof of part (i) of Lemma~\ref{lem:noiselip} for details, we observe linear growths for
  \[\cC_1 : \H^1 \to \HS(\cU;\H^1).\]
  Summarizing, the assumptions of Theorem~\ref{thm:globalSEEabst} with $q= 2$ are fulfilled and Theorem~\ref{thm:globalSEE} follows.
\end{proof}
\begin{proof}[Proof of Theorem~\ref{thm:tau0tau}]
  The proof works similar to the proof of~\cite[Theorem 4.5]{SFBP}. 
  For $N\in \N$ let $\varrho_N:\R^2\to \R$ be a locally Lipschitz continuous function such that
  \[ \varrho_N(x,y) =
  \begin{cases}
    \varrho(x,y),& \abs{(x,y)}\leq N,\\
    0,& \abs{(x,y)} > N+1.
  \end{cases}
  \]
  In consistency with section~\ref{sec:spde}, set
  \begin{equation}
    \label{eq:17}
    \tau_0^N := \inf\left\{t\geq 0\,\mid\, t<\tau, \abs{\cI(X(t))} >N\right\},\qquad \tau_0 := \lim_{N\to\infty} \tau_0^N,
  \end{equation}
  using the convention $\inf \emptyset = \infty$.
  By continuity of the trace operator, we have 
  \[ N \leq  \abs{\cI(X(\tau_0^N))} \leq K_{\cI} \norm{X(\tau_0^N)}{\H^1},\quad\text{on }\{\tau_0<\infty\}.\]
  In particular, on $\{\tau_0<\infty\}$,
  \[\lim_{N\to\infty} \norm{X(\tau_0^N)}{\H^1} = \infty,\]
  which yields, due to $\H^1$-continuity of $X$, that $\tau_0\geq \tau$ almost surely. 
  
  On the other side, replacing $\varrho$ by $\varrho_N$, the stochastic evolution equation~\eqref{eq:SEEqG} admits a unique global solution $X_N$ by Theorem~\ref{thm:globalSEE}. By definition of $\varrho_N$, $(X_N, \tau_0^N)$ is a local solution of the original equation, so that the uniqueness claim of Theorem~\ref{thm:euSEE} yields $X= X_N$ on $\llbrak 0,\tau_0^N\llbrak$, and $\tau_0^N \leq \tau$ almost surely, for all $N\in \N$.
\end{proof}

\section{Distributional Solutions and Transformation}
\label{sec:trafo}
The transformation from the fixed to the moving boundary problem will be performed by It\={o}-Wentzell formula, in its version proven by Krylov~\cite{krylovItoWentzell}. To this end, we first have to rewrite the SPDEs considered above into an equation on the distribution space. Recall that the cylindrical $\Id$-Wiener process $W$ on $U=L^2(\R)$ can be written as
\[ W_t = \sum_{k=1}^\infty e_k \beta_t^k,\qquad t\geq 0,\]
for an orthonormal basis $(e_k)_{k\in\N}$ of $U$, and independent Brownian motions $\beta^k$, $k\geq 1$. For consistent notation, we set $\beta^0 := B$.

We denote by $C^\infty_0= C^\infty_0(\R)$ the space of smooth real functions with compact support and $\sD$ the space of distributions. Let $\ell^2$ be the space of real square summable sequences. We denote by $\sD(\ell^2)$ the space of $\ell^2$-valued distributions on $C^\infty_0$. That is, linear $\ell^2$-valued functionals such that $\phi \mapsto \langle g,\phi\rangle = (\langle g^k,\phi \rangle)_k$ is continuous with respect to the standard convergence of test functions. 

For a predictable stopping time $\tau$ there exists an announcing sequence $(\tau_n)_{n\in \N}$. That is, $\lim_{n\to\infty} \tau_n = \tau$ almost surely and $\tau_n < \tau$ on $\{\tau>0\}$, cf.~\cite[Rmk 2.16]{js}. We will use this notation in this section without further mentioning.

\begin{rmk}\label{rmk:HSl2}
  Note that $\HS(U;E)$ is isometric isomorphic to $\ell^2(E)$. More precisely, any element $T\in \HS(U;E)$ can be identified with $(Te_k)_{k\in \N}\in \ell^2(E)$, for a CONS $(e_k)_{k\in \N}$ of $U$.  
\end{rmk}
\begin{lem}\label{lem:HSl2imb}
  Let $U$, $\tilde E$ and $E$ be separable Hilbert spaces with $\tilde E \hookrightarrow E$. Then,
  \[\HS (U;\tilde E) \hookrightarrow \HS(U;E),\quad \text{and}\quad \ell^2(\tilde E) \hookrightarrow \ell^2(E).\]
\end{lem}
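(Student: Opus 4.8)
The plan is to reduce both assertions to the elementary fact that a continuous embedding $\iota\colon\tilde E\hookrightarrow E$, say with $\|\iota x\|_E\le C\|x\|_{\tilde E}$ for all $x\in\tilde E$, acts coordinatewise on square-summable sequences without increasing the $\ell^2$-norm by more than the factor $C$, and that post-composition with $\iota$ does the same on Hilbert--Schmidt operators. Through the isometric identification of Remark~\ref{rmk:HSl2} the two claims are then essentially one and the same statement.

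First I would treat the sequence space. For $(x_k)_k\in\ell^2(\tilde E)$ the bound $\sum_k\|\iota x_k\|_E^2\le C^2\sum_k\|x_k\|_{\tilde E}^2$ shows that $(x_k)_k\mapsto(\iota x_k)_k$ is a well-defined bounded linear (and, since $\iota$ is injective, injective) map $\ell^2(\tilde E)\to\ell^2(E)$ of norm at most $C$, which lets us regard $\ell^2(\tilde E)$ as a subspace of $\ell^2(E)$. For the density built into the symbol $\hookrightarrow$, I would recall that the finitely supported sequences are dense in $\ell^2(E)$; given such a sequence and $\varepsilon>0$, density of $\tilde E$ in $E$ allows one to replace each of its finitely many nonzero entries by an element of $\tilde E$ that is close in $E$-norm, yielding an element of $\ell^2(\tilde E)$ within $\varepsilon$ of the original. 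Hence $\ell^2(\tilde E)$ is dense in $\ell^2(E)$.

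Then I would transfer everything to Hilbert--Schmidt operators. Fixing a CONS $(e_k)_k$ of $U$, Remark~\ref{rmk:HSl2} identifies $\HS(U;\tilde E)$ with $\ell^2(\tilde E)$ and $\HS(U;E)$ with $\ell^2(E)$ via $T\mapsto(Te_k)_k$, and under these identifications the map $T\mapsto\iota\circ T$ is exactly the coordinatewise embedding of the previous step; the estimate
\[ \|\iota T\|_{\HS(U;E)}^2=\sum_k\|\iota Te_k\|_E^2\le C^2\sum_k\|Te_k\|_{\tilde E}^2=C^2\|T\|_{\HS(U;\tilde E)}^2 \]
confirms boundedness with constant $C$ and does not depend on the chosen CONS, while density is inherited from the $\ell^2$-case — equivalently, finite-rank operators with range in $\tilde E$ are dense in $\HS(U;E)$ by the same entrywise approximation. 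I do not expect a genuine obstacle; the only point that needs attention is that $\hookrightarrow$ in this paper carries density as well as continuity, so checking boundedness alone would not suffice, and the finite-support approximation is precisely what closes that gap.
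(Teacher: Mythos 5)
Your proof is correct and follows essentially the same route as the paper: coordinatewise application of the embedding gives the norm bound on $\ell^2(\tilde E)\to\ell^2(E)$, and the Hilbert--Schmidt statement is transferred via the identification $T\mapsto (Te_k)_k$ of Remark~\ref{rmk:HSl2}. You additionally spell out the density argument (approximating finitely supported sequences entrywise), which the paper explicitly skips, so your write-up is if anything more complete.
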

\begin{proof}
  First, it is clear that for $(g^k)_{k\in \N}\in \ell^2(\tilde E)$ it holds that 
  \[ \sum_{k=1}^\infty \norm{g^k}{E}^2  \leq K^2 \sum_{k=1}^\infty \norm{g^k}{\tilde E}^2.\]

  Hence, we can consider $\ell^2(\tilde E)$ as a subset of $\ell^2(E)$. One can also show that from density of $\tilde E$ in $E$ it follows density of $\ell^2(\tilde E)$ in $\ell^2(E)$, but we skip the details here. By identification, the results also hold true for $\HS$. 
\end{proof}
From strong continuity of the shift group on $L^2(\R)$ we get the following basic result.
\begin{lem}\label{lem:shift}
  For all $x\in \R$, the shift operation $u\mapsto u(.+x)$ is isometric isomorphic from $H^k(\dot \R)$ into $H^k(\R\setminus \{x\})$, for all $k\geq 0$. In addition, the operation 
  \[ (u,x) \mapsto u(.+x),\]
  is continuous from $L^2(\R) \oplus \R$ into $L^2(\R)$.
\end{lem}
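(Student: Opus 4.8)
The plan is to prove the two assertions separately; each one reduces to translation invariance of Lebesgue measure together with the fact that translation commutes with weak differentiation.

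For the first assertion, I would fix $x\in\R$ and write $\shift_x u := u(\cdot+x)$. The affine map $y\mapsto y+x$ is a measure-preserving bijection of $\R$ carrying $\R\setminus\{-x\}$ onto $\dot\R$ (equivalently, after relabelling $x\mapsto -x$, carrying $\R\setminus\{x\}$ onto $\dot\R$, which is the formulation in the statement). On each of the two connected components of the punctured line, $\shift_x u$ is the translate of the corresponding restriction of $u$, so its weak derivatives up to order $k$ are the translates of those of $u$; performing the substitution $z=y+x$ in each of the resulting integrals then gives $\|\shift_x u\|_{H^k(\R\setminus\{-x\})}=\|u\|_{H^k(\dot\R)}$ for every $u\in H^k(\dot\R)$ and every $k\ge0$. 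Since $\shift_{-x}$ is a two-sided inverse, $\shift_x$ is an isometric isomorphism onto $H^k(\R\setminus\{-x\})$.

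For the second assertion, I would start from the triangle inequality combined with translation invariance of the $L^2$-norm,
\[
  \|u_n(\cdot+x_n)-u(\cdot+x)\|_{L^2(\R)} \le \|u_n-u\|_{L^2(\R)} + \|u(\cdot+x_n)-u(\cdot+x)\|_{L^2(\R)},
\]
valid for $u,u_n\in L^2(\R)$ and $x,x_n\in\R$. If $(u_n,x_n)\to(u,x)$ in $L^2(\R)\oplus\R$, the first term vanishes by hypothesis and the second by strong continuity of the shift group $(\shift_x)_{x\in\R}$ on $L^2(\R)$; as $L^2(\R)\oplus\R$ is metrizable, sequential continuity is enough.

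The only point that is not immediate is the strong continuity invoked in the last step. I would obtain it by the usual density argument: it holds for $u\in C^\infty_0(\R)$ by uniform continuity and dominated convergence, and extends to all $u\in L^2(\R)$ from the density of $C^\infty_0(\R)$ in $L^2(\R)$ together with the uniform bound $\|\shift_x\|_{\mathcal L(L^2(\R))}=1$, via an $\varepsilon/3$ estimate. No genuine obstacle is anticipated; if preferred, strong continuity of the translation group on $L^p(\R)$ may simply be cited as classical, as is in effect already indicated in the sentence preceding the lemma.
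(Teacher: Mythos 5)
Your proof is correct and follows the same route the paper intends: the lemma is stated there without proof as a direct consequence of translation invariance of Lebesgue measure and strong continuity of the shift group on $L^2(\R)$, which is exactly the content you flesh out (isometry on each connected component plus commutation with weak derivatives for the first claim, the triangle inequality plus the $\varepsilon/3$ density argument for the second). Your observation that $u(\cdot+x)$ for $u\in H^k(\dot\R)$ naturally lands in $H^k(\R\setminus\{-x\})$, so the statement is to be read up to the relabelling $x\mapsto -x$, is a fair catch and does not affect the result.
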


Using the Riesz isomorphism we consider $L^2(\R)$ as a subset of $\sD$. The following lemma is the corresponding result addressing $\ell^2$.
\begin{lem}\label{lem:L2distr}
  By identification,
  \[L^2(\R)\subset \sD,\qquad \text{and}\qquad \ell^2(L^2(\R)) \subset \sD(\ell^2).\] 
\end{lem}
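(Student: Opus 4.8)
The plan is to establish both inclusions by exhibiting the natural identification maps and checking continuity with respect to the relevant topologies, namely the $L^2$-norm (resp. $\ell^2(L^2(\R))$-norm) on the source and the topology of distributions (resp. $\ell^2$-valued distributions) on the target. For the first inclusion, I would use the Riesz representation: every $f\in L^2(\R)$ acts on a test function $\phi\in C^\infty_0(\R)$ via $\scal{f}{\phi}{} := \int_\R f(x)\phi(x)\d x$, which is well-defined since $\phi\in L^2(\R)$, and by Cauchy--Schwarz $\abs{\scal{f}{\phi}{}} \leq \norm{f}{L^2}\norm{\phi}{L^2}$. Continuity with respect to the standard convergence of test functions (uniform convergence together with supports contained in a fixed compact $K$) is immediate, since such convergence implies $L^2$-convergence on $K$, hence $\scal{f}{\phi_n}{}\to\scal{f}{\phi}{}$. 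Injectivity of the map $f\mapsto \scal{f}{\cdot}{}$ is the fundamental lemma of the calculus of variations: if $\int f\phi = 0$ for all $\phi\in C^\infty_0$, then $f = 0$ in $L^2(\R)$. Thus $L^2(\R)$ embeds into $\sD$ and, since $C^\infty_0$ is dense in $L^2(\R)$, the embedding is even dense; this justifies writing $L^2(\R)\subset\sD$.

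For the second inclusion I would argue componentwise, reducing it to the first. Given $g = (g^k)_{k\in\N}\in \ell^2(L^2(\R))$, define $\scal{g}{\phi}{} := (\scal{g^k}{\phi}{})_{k\in\N}$ for $\phi\in C^\infty_0(\R)$. The key point is that this sequence lies in $\ell^2$: by Cauchy--Schwarz in each component,
\[
  \sum_{k=1}^\infty \abs{\scal{g^k}{\phi}{}}^2 \leq \norm{\phi}{L^2}^2 \sum_{k=1}^\infty \norm{g^k}{L^2}^2 = \norm{\phi}{L^2}^2 \norm{g}{\ell^2(L^2(\R))}^2 < \infty.
\]
So $\phi\mapsto \scal{g}{\phi}{}$ is a well-defined $\ell^2$-valued functional, and the same estimate shows continuity in $\ell^2$-norm with respect to test-function convergence (again, $\phi_n\to\phi$ in the test-function sense implies $\norm{\phi_n-\phi}{L^2}\to 0$, whence $\norm{\scal{g}{\phi_n}{} - \scal{g}{\phi}{}}{\ell^2}\to 0$). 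Injectivity follows componentwise from the scalar case: if $\scal{g}{\phi}{} = 0$ in $\ell^2$ for all $\phi$, then $\scal{g^k}{\phi}{} = 0$ for all $k$ and all $\phi$, so each $g^k = 0$. Hence $g$ is identified with an element of $\sD(\ell^2)$.

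The argument is essentially routine; the only mild subtlety — and the step I would be most careful about — is verifying that the identification map is continuous \emph{as a map between topological vector spaces}, i.e. that the $L^2$-topology (resp. $\ell^2(L^2(\R))$-topology) on the source is at least as fine as the subspace topology induced from $\sD$ (resp. $\sD(\ell^2)$). This is what the symbol $\subset$ in the statement really encodes (consistent with the paper's convention that $\hookrightarrow$ denotes continuous dense embedding). Concretely one checks that if $f_n\to f$ in $L^2(\R)$ then $\scal{f_n}{\phi}{}\to\scal{f}{\phi}{}$ for every fixed $\phi$, which is again Cauchy--Schwarz; and similarly in the $\ell^2$-valued case using the uniform-in-$\phi$ bound above. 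I would present the $L^2$ case in detail and then note that, by applying it in each coordinate together with the summability estimate displayed above (which is exactly the kind of bound already used in Lemma~\ref{lem:HSl2imb}), the $\ell^2$-valued statement follows.
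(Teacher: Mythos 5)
Your proof is correct and takes essentially the same route as the paper: the scalar inclusion via the Riesz identification, and the $\ell^2$-valued inclusion via the componentwise Cauchy--Schwarz bound $\sum_{k}\abs{\scal{g^k}{\phi}{L^2}}^2 \leq \norm{g}{\ell^2(L^2)}^2\norm{\phi}{L^2}^2$, which is exactly the paper's key estimate. The additional checks of injectivity and continuity that you spell out are fine but are left implicit in the paper's one-line argument.
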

\begin{proof}
  Let $h = (h^k)_{k\geq 0}\in \ell^2(L^2)$, then
  \[ \phi \mapsto (\langle h,\phi \rangle^k)_{k\geq 0}  := (\langle h^k, \phi \rangle_{L^2})_{k\geq 0}\]
  defines a continuous linear $\ell^2$-valued function on $C^\infty_0$, since Cauchy-Schwartz inequality yields
  \begin{equation}
    \label{eq:distCSI}
    \sum_{k=0}^\infty \abs{\langle h^k,\phi \rangle }^2 \leq \norm{h}{\ell^2(L^2)}^2 \norm{\phi}{L^2}^2,\qquad \forall \phi \in C^\infty_0.   \qedhere 
  \end{equation}
\end{proof}

\subsection{It\={o}-Wentzell Formula}
\begin{defn}
  A $\sD$-valued stochastic process $f=(f_t)$ is called \emph{predictable}, if for all $\phi \in C^\infty_0$, the real valued stochastic process $(\langle f_t,\phi\rangle)_t$ is predictable. In the same way, we call a $\sD(\ell^2)$-valued stochastic process $g= (g_t)$ predictable, if $(\langle g_t,\phi \rangle)_t$ is $\ell^2$-predictable for all $\phi \in C^\infty_0$. 
\end{defn}
We are now interested in equations on $\sD$, of the form
\begin{equation}
  \label{eq:spdeD}
  \d u(t,x) = f_t(x) \d t + \sum_{k=0}^\infty g^k_t(x) \d \beta^k_t,
\end{equation}
with initial conditions $u(0,x) = u_0(x)$, which are assumed to be $\sD$-valued and $\F_0$-measurable.
\begin{ass}
  \label{a:distr} 
  \begin{enumerate}[label=(\roman*)]
  \item\label{ai:Dint} For a $\sD$-valued predictable process $f$ assume that for all $\phi \in C^\infty_0$ and all $R$, $T>0$ it holds that
    \[ \int_0^T \sup_{\abs{x}\leq R} \abs{\langle f_t,\phi(.-x)\rangle } \d t <\infty \quad\PP\text{-almost surely}.\]
  \item\label{ai:Dl2int} For a $\sD(\ell^2)$-valued predictable process $g$ assume that for all $\phi \in C^\infty_0$ and all $R$, $T>0$ it holds that
    \[ \int_0^T \sup_{\abs{x}\leq R} \norm{\langle g_t,\phi(.-x)\rangle }{\ell^2}^2 \d t <\infty \quad\PP\text{-almost surely}.\]
  \end{enumerate}
\end{ass}
\begin{rmk}
  If a $\sD(\ell^2)$-valued stochastic process $g$ satisfies part (ii) of Assumption~\ref{a:distr}, then for all $T>0$, $\phi \in C^\infty_0$, almost surely
  \begin{equation}
    \label{eq:16}
    \sum_{k=0}^\infty \int_0^T \langle g_t^k, \phi\rangle^2 \d t = \int_0^T \norm{\langle g_t,\phi \rangle}{\ell^2}^2 \d t.
  \end{equation}
\end{rmk}

\begin{defn}\label{df:distrSPDE}
  Let $f$ and $g$ be predictable processes on $\sD$ and $\sD(\ell^2)$, respectively, and $\tau$ be a predictable stopping time such that $f$ and $g$ satisfy Assumption~\ref{a:distr} on $\llbrak 0,\tau_n\rrbrak$ for all $n\in\N$. Then, a $\sD$-valued predictable process is called \emph{(local) solution in the sense of distributions} of~\eqref{eq:spdeD}, if for all $\phi \in C^\infty_0$ it holds on $\llbrak 0,\tau \llbrak$,
  \[ \langle u(t), \phi \rangle  = \langle u_0,\phi\rangle + \int_0^t \langle f_s ,\phi \rangle \d s + \sum_{k=0}^\infty \int_0^t \langle g_s^k, \phi \rangle \d \beta^k_s.\]
\end{defn}
Our aim is to shift the solutions of SPDEs by a one dimensional It\={o}-diffusion. On its coefficients, we impose the following conditions. 
\begin{ass}  \label{a:xt}
  Assume that the real predictable processes $b= (b_t)_{t\geq 0}$ and $\nu^k = (\nu^k_t)_{t\geq 0}$, $k\in \N_0$ satisfy for all $t>0$ almost surely
  \[ \int_0^t \abs{b_s} + \norm{(\nu^k_s)_{k\in \N_0}}{\ell^2}^2 \d s < \infty.\]
\end{ass}
The next theorem is a version of \cite[Theorem 1.1]{krylovItoWentzell} reformulated for processes which exist up to predictable stopping times. In fact, when $\tau$ is a predictable stopping time we can apply Krylov's result on $\llbrak 0,\tau_n \rrbrak$ for all $n\in\N$. We denote the first two distributional derivatives on $\R$ by $\Ddx$ and $\Ddxx$, respectively. 
\begin{thm}[It\={o}-Wentzell Formula]\label{thm:itowentzell}
  Let $\tau$ be a predictable stopping time with announcing sequence $(\tau_n)_{n\in\N}$. Moreover, let $f$ and $g$ be resp. $\sD$- and $\sD(\ell^2)$-predictable processes such that $f$ and $g$ satisfy Assumption~\ref{a:distr} on $\llbrak 0,\tau_n\rrbrak$, for all $n\in\N$ and $u$ is a local distributional solution on $\llbrak 0,\tau\llbrak$ of
  \[\d u_t(x) = f_t(x) \d t + \sum_{k=0}^\infty g^k_t(x) \d \beta^k.\]
  Moreover, consider real predictable processes $b = (b_t)$, $(\nu_t^k)_{t\geq 0}$, $k\in \N_0$ such that, on $\llbrak 0,\tau_n\rrbrak$, they satisfy Assumption~\ref{a:xt}. Let $x_t$ be given on $\llbrak 0,\tau\llbrak$ by
  \[\d x_t = b_t \d t + \sum_{k=0}^\infty \nu^k_t \d \beta^k_t.\]
  Then, $v_t(x):= u_t(x+x_t)$ is a local distributional solution on $\llbrak 0,\tau\llbrak$ of
  \begin{multline*}
    \d v_t(x) = \left[ \tfrac12 \sum_{k=0}^\infty \abs{\nu_t^k}^2  \Ddxx v_t(x) + b_t \Ddx v_t(x) + \sum_{k=0}^\infty \Ddx g_t^k(x+x_t) \nu^k_t \right] \d t\\
    +f_t(x+x_t) \d t + \sum_{k=0}^\infty \left[g_t^k(x+ x_t)+  \Ddx v_t(x) \nu^k_t\right] \d\beta^k_t.
  \end{multline*}
\end{thm}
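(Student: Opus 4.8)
The plan is to reduce everything to the finite-horizon statement of \cite[Theorem 1.1]{krylovItoWentzell} by localization, and then to verify that the quantitative hypotheses of that theorem are met. First I would fix $n\in\N$ and work on the closed stochastic interval $\llbrak 0,\tau_n\rrbrak$. By assumption, on this interval $f$ and $g$ satisfy Assumption~\ref{a:distr}, and $b$, $(\nu^k)$ satisfy Assumption~\ref{a:xt}; moreover $u$ is a distributional solution of $\d u_t = f_t\d t + \sum_k g_t^k\d\beta^k_t$ there, and $x_t$ solves $\d x_t = b_t\d t + \sum_k \nu^k_t\d\beta^k_t$. Since $\tau_n$ is a bounded-below-by-nothing but a genuine stopping time (not necessarily bounded), I would further localize by $\tau_n\wedge m$ for $m\in\N$ so that the horizon is deterministic and finite, which is exactly the setting of Krylov's theorem — its integrability conditions are stated over $[0,T]$ and are implied pathwise by Assumptions~\ref{a:distr} and~\ref{a:xt}. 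Krylov's result then yields that, on $\llbrak 0,\tau_n\wedge m\rrbrak$, the shifted process $v_t(x) := u_t(x+x_t)$ is a distributional solution of precisely the claimed equation, with the Stratonovich-type correction $\tfrac12\sum_k|\nu_t^k|^2\Ddxx v_t$ coming from the quadratic variation of $x$, the transport term $b_t\Ddx v_t$ from its drift, and the cross term $\sum_k\Ddx g_t^k(\cdot+x_t)\nu_t^k$ from the joint variation of $g$ and $x$.

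The second step is to patch the local identities together. Testing against a fixed $\phi\in C^\infty_0$, both sides of the asserted integral equation are continuous (in fact càdlàg) real processes, and the identity on $\llbrak 0,\tau_n\wedge m\rrbrak$ for every $m$ gives the identity on $\llbrak 0,\tau_n\llbrak$; letting $n\to\infty$ and using $\tau_n\nearrow\tau$ with $\tau_n<\tau$ on $\{\tau>0\}$ (the announcing-sequence property) gives the identity on $\llbrak 0,\tau\llbrak$. One has to check that the right-hand side integrals in the statement are well defined on $\llbrak 0,\tau\llbrak$: the Lebesgue integral of $\langle f_s(\cdot+x_s),\phi\rangle$, of $|\nu^k_s|^2\langle\Ddxx v_s,\phi\rangle = |\nu^k_s|^2\langle v_s,\phi''\rangle$, of $b_s\langle v_s,\phi'\rangle$, and of $\langle \Ddx g_s^k(\cdot+x_s),\phi\rangle\nu^k_s$ summed over $k$, as well as the stochastic integrals $\int_0^t(\langle g_s^k(\cdot+x_s),\phi\rangle - \langle v_s,\phi'\rangle\nu^k_s)\d\beta^k_s$. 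These all follow on each $\llbrak 0,\tau_n\rrbrak$ from Assumption~\ref{a:distr}\ref{ai:Dint}--\ref{ai:Dl2int} applied with the test functions $\phi$, $\phi'$, $\phi''$ and the pathwise bound $\sup_{s\le\tau_n}|x_s|<\infty$ (a consequence of Assumption~\ref{a:xt} and continuity of $x$), combined with the elementary inequality $\langle h(\cdot+x_s),\phi\rangle = \langle h,\phi(\cdot-x_s)\rangle$ and a localization of the colored-noise sum via~\eqref{eq:16}.

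The only genuine subtlety — and the step I expect to be the main obstacle — is the bookkeeping for the infinite sum over $k\ge 0$: one must justify that $\sum_k\langle\Ddx g_s^k(\cdot+x_s),\phi\rangle\nu^k_s$ converges in $L^1_{loc}$ in $s$ and that $\sum_k\int_0^{\cdot}(\langle g_s^k(\cdot+x_s),\phi\rangle+\langle\Ddx v_s,\phi\rangle\nu^k_s)\d\beta^k_s$ converges in probability, uniformly on compacts, to a well-defined local martingale. This is handled by Cauchy–Schwarz in $k$: $\sum_k|\langle\Ddx g_s^k(\cdot+x_s),\phi\rangle\nu^k_s| \le \|\langle g_s(\cdot+x_s),\phi'\rangle\|_{\ell^2}\,\|(\nu^k_s)\|_{\ell^2}$, which is integrable on $\llbrak 0,\tau_n\rrbrak$ by Assumptions~\ref{a:distr}\ref{ai:Dl2int} and~\ref{a:xt}; the martingale part is controlled via its quadratic variation $\int_0^t\|\langle g_s(\cdot+x_s),\phi\rangle + \langle v_s,\phi'\rangle(\nu^k_s)_k\|_{\ell^2}^2\d s$ using~\eqref{eq:16} and the same two assumptions. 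Since all of this is precisely the content verified inside Krylov's proof under his deterministic-horizon hypotheses, the localization argument above transfers it verbatim to the predictable-stopping-time setting, and no new estimate is needed beyond the reformulation. This completes the proof. \qedhere
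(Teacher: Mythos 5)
Your proposal is correct and follows essentially the same route as the paper, which proves this theorem simply by observing that it is Krylov's Theorem~1.1 applied on each $\llbrak 0,\tau_n\rrbrak$ of an announcing sequence and then patched together as $\tau_n\nearrow\tau$; the paper's Assumptions~\ref{a:distr} and~\ref{a:xt} are designed precisely so that Krylov's hypotheses hold on each such interval. Your additional verifications (the Cauchy--Schwarz bookkeeping in $k$ and the well-posedness of the integrals after the shift) are sound but are already contained in Krylov's proof, as you note.
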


\subsection{Proof of Theorem~\ref{thm:EUsmbp}}

Let $(X,\tau)$ be the unique maximal strong solution of \eqref{eq:SEEqG} on $\L^2$ and $(\tau_n)_{n\in \N}$ and announcing sequence for $\tau$. Define the isometry, see Remark~\ref{rmk:isometry},
\[ \iota :  \L^2 \to L^2(\R)\oplus \R ,\quad (u_1,u_2,p) \mapsto (u_1\1_{\R_+} + u_2(-(.)) \1_{\R_-},p)\]
and set  $(u(t,.),p_*(t)) := \iota X(t)$ on $\llbrak 0,\tau\llbrak$. To recover the notation of~\eqref{eq:spdeD}, write
\begin{align*}
  f_t(x) &:=\bar \mu(x,p_*(t), u(t,x),\nabla u(t,x), \Delta u(t,x))\\
         &\qquad \qquad+ \frac{\sigma_*^2}2 \Delta u(t,x) + \varrho(\cI(u(t)))\nabla u(t,x),\\
  g_t^k(x) &:= \bar\sigma(x, p_*(t), u(t,x)) T_{\zeta} e_k(x),\quad k\geq 1,
\end{align*}
and $g_t^0(x) := \sigma_* \nabla u(t,x)$. Recall that $\nabla u$ and $\Delta u$ denote the first two piece-wise weak derivatives, which are assumed to exist as elements in $L^2(\R)$. The functions $\bar\mu$ and $\bar\sigma$ have been defined resp. in~\eqref{eq:bmu} and~\eqref{eq:bsigma}. Note that on $\llbrak 0,\tau\llbrak$
\[ \iota \cC(X(t))[e_k,0](x) = (g_t^k(x),0) ,\quad \text{and}\quad\iota\cC(X(t))[0,1](x) = (g_t^0(x),\sigma_*).\] 
Obviously, $\iota$ is also isometric isomorphic from $\H^\alpha$ into $H^\alpha (\dot \R)\oplus \R$ for all $\alpha >0$. Recall that $\cC(X(t))\in \HS(\cU; \H^1)$ and thus $\iota \cC(X(t)) \in \HS(\cU; H^1(\dot \R)\oplus \R)$ almost surely. By Lemma~\ref{lem:HSl2imb} in combination with Remark~\ref{rmk:HSl2} we get that $(g_t^k)_{k\geq 0}$ is $\ell^2(L^2(\R))$-continuous on $\llbrak 0,\tau \llbrak$. Localizing up to $\tau_n$, for each $n\in\N$, we also obtain (square) integrability on $f_t$ and $g_t$ by Cauchy-Schwartz inequality and~\eqref{eq:distCSI}, respectively. Moreover, Lemma~\ref{lem:HSl2imb} yields
\begin{equation*}
  \int_0^t \sup_{\abs{x}\leq R} \norm{\scal{g_s}{\phi(.-x)}{}}{\ell^2}^2 \d s \leq \norm{\phi}{L^2(\R)}^2\int_0^t \norm{\cC(X(s))}{\HS(\cU;\L^2)}^2 \d s,
\end{equation*}
which is finite on $\llbrak 0,\tau_n \rrbrak$, for all $n\in \N$ and $R>0$. Indeed, since $\tau_n <\tau$ on $\{\tau>0\}$, $X(.\wedge \tau_n)$ has paths in $C([0,\tau_n];\H^1)$ almost surely. By Lemma~\ref{lem:noiselip}, $\cC$ is continuous from $\H^1$ into $\L^2$ which yields the integrability property of $(g_t)$. Choosing $x_t :=  -p_*(t)$, we get the remaining integrability claims in a similar way and obtain that all assumptions of Theorem~\ref{thm:itowentzell} are fulfilled. 

Since testing against test functions is a continuous linear operation on $L^2(\R)$, $u$ is also a solution in the sense of Definition~\ref{df:distrSPDE}. For $v(t,x) := u(t,x - p_*(t))$ we get by Theorem~\ref{thm:itowentzell},
\begin{multline}
  \d v(t,x) = f_t(x+x_t) \d t + \\
  + \left[\frac{\sigma_*^2}2 \Ddxx v(t,x) - \varrho(\cI(u(t,.))) \Ddx v(t,x) - \sigma_*\Ddx g_t^0(x+x_t) \right]\d t + \\
  + \sum_{k=1}^\infty g^k_t (x+x_t) \d \beta^k_t +
  \left[g_t^0(x+x_t) -  \sigma_*\Ddx v(t,x)\right] \d\beta^0_t.\label{eq:vC1}
\end{multline}
Note that 
\begin{align*}
  f_t(x+x_t) &= \bar \mu(x-p_*(t),p_*(t),v(t,x), \nabla v(t,x), \Delta v(t,x)) + \frac{\sigma_*}2 \Delta v(t,x) \\
             &\qquad\qquad\qquad\qquad\qquad\qquad+ \varrho(v(t,p_*(t)+), v(t,p_*(t)-)) \nabla v(t,x),\\
  g^k_t(x+x_t) &= \bar \sigma(x-p_*(t),p_*(t),v(t,x)) T_\zeta e_k(x),\quad k\geq 1,
\end{align*}
and hence,
\begin{align*}
  \d v(t,x) &= \left[ \vphantom{\tfrac{\sigma^2_*}2}\bar \mu(x-p_*(t),p_*(t), v(t,x), \nabla v(t,x), \Delta v(t,x)) \right. \\
            &\quad\quad + \varrho(v(t,p_*(t)+), v(t,p_*(t)-)) \left( \nabla v(t,x) - \Ddx v(t,x) \right) \\
            &\quad\quad  \left. + \frac{\sigma_*^2}2  \left( \Delta v(t,x) + \Ddxx v(t,x) - 2 \Ddx \nabla v(t,x)\right) \right] \d t\\
            &\quad + \bar \sigma(x-p_*(t),p_*(t), v(t,x)) \d \xi_t(x)  + \sigma_*\left( \nabla v(t,x) - \Ddx v(t,x)\right) \d\beta^0_t.
\end{align*}
Moreover, for $h\in H^1(\R\setminus\{p\})$, $p\in \R$, it holds that
\begin{equation*}
  \Ddx h - \nabla h = (h(p+) - h(p-)) \delta_p,
\end{equation*}
where $\delta_p$ is the Dirac distribution with mass at $p$. This indeed holds true for all $h\in H^1(\R\setminus\{p\}) \cap BUC^1(\R\setminus\{p\})$ and then extends by density to all of $H^1(\R\setminus\{p\})$. Inserting into~\eqref{eq:vC1} yields
\begin{align*}
  \d v(t,x) &=  \bar \mu(x-p_*(t),p_*(t) v(t,x) ,\nabla v(t,x) ,\Delta v(t,x)) \d t\\
            &\qquad + \bar \sigma(x-p_*(t),p_*(t),v(t,x))\d\xi_t(x)\\
            &\qquad + L_1(v(t,.), p_*(t)) \d p_*(t) + \tfrac12 L_2(v(t,.),p_*(t)) \d\,[p_*](t),
\end{align*}
where $L_1$ and $L_2$ have been defined in Section~\ref{sec:spde} as
\begin{align*}
  L_1(v,p) &= -\left( v(p+) - v(p-)\right) \delta_p, \\
  L_2(v,p) &= \left(v(p+) - v(p-)\right) \delta_p' - \left( \nabla v(p+) - \nabla v(p-)\right) \delta_p.
\end{align*}
Here, $\delta'_p$ is the distributional derivative of $\delta_p$. Finally, we use that the shift operation $(h,x) \mapsto h(.-x)$ is continuous on $L^2(\R)\oplus \R$ and hence, the paths of $v$, $\nabla v$, $\Delta v$ and $p_*$ inherit the space-time-regularity which is claimed in Theorem~\ref{thm:EUsmbp}.\\

To show uniqueness, let $(w, q_*,\varsigma)$ be another local solution such that, as functions of time, $q_*$, $w$ and $\nabla w$ are continuous and $\Delta w$ is square integrable on $L^2(\R)$. Let  $(\varsigma^n)$ be an announcing sequence for $\varsigma$ and set $u(t,.) := w(t,.+q_*(t))$. From Lemma~\ref{lem:shift}, we get almost surely,
\[u\in C([0,\varsigma); H^1(\dot \R))\cap L^2([0,\varsigma);H^2(\dot \R)).\]
By definition of $\Gamma^2(x)$, $x\in \R$, we get in addition that $u(t,.)$ fulfills the boundary conditions~\eqref{eq:bcc} for almost all $t>0$. Now, it suffices to show that $(u,\varsigma)$ is a local strong solution of~\eqref{eq:spde}. To this end, set
\begin{align*}
  f_t(x) &:= \bar\mu(x-q_*(t),q_*(t),w(t,x),\nabla w(t,x),\Delta w(t,x)) \\
         &\qquad +\varrho(w(t,q_*(t)+),w(t,q_*(t)-))L_1(w(t,.), q_*(t)) \\
         &\qquad +  \tfrac12 \sigma_* L_2(w(t,.),q_*(t)),\\
  g_t^k(x) &:= \bar \sigma(x-q_*(t),q_*(t), w(t,x)) T_\zeta e_k(x),\quad k\geq 1,
\end{align*}
and $g_t^0(x) := 0$. 

Since we know already that $u$ has the path regularity which was asked
for, we can again apply the procedure from the existence part of this
proof to get the measurability and integrability properties of
$g_t^k$, $k\geq 1$, and the the part of $f_t$ involving $\bar\mu$.

Moreover, for $\phi \in C_0^\infty $ and $R>0$, we get on $\llbrak 0,\varsigma_n\rrbrak$,
\begin{multline*}
  \int_0^t \sup_{\abs{x}<R}\abs{\scal{L_1(w(s,.), q_*(s))}{\phi(.-x)}{}}^2\d s \leq \\
  \leq  \norm{\phi}{\infty}^2 \int_0^t \abs{u(s,0+) - u(s,0-)}^2 \d s,
\end{multline*}
and, similarly,
\begin{multline*}
  \int_0^t\sup_{\abs{x}<R} \abs{\scal{L_2(w(s,.),q_*(s))}{\phi(.-x)}{}}\d s \leq  \\ 
  \leq \norm{\phi}{C^1} \left(\int_0^t \abs{u(s,0+) - u(s,0-)}\d s+
    +  \int_0^t \abs{\nabla u(s,0+) - \nabla u(s,0-)} \d s\right).
\end{multline*}

By continuity of the trace operator on $H^1(\dot \R)$, and since almost surely $u\in L^2([0,\varsigma); H^2(\dot \R))$, we get that all these integrals are finite on $\llbrak 0,\varsigma_n\rrbrak$, for all $n\in \N$, and the assumptions of Theorem~\ref{thm:itowentzell} are fulfilled for $w$ and $q_*$. Hence, It\={o}-Wentzell formula yields that $u$ is a solution, in the sense of distributions, of~\eqref{eq:spde}. 

To get to the notion of strong solutions, we switch back to the framework of Section~\ref{sec:centered}. Set $Y(t):= \iota^{-1}(u(t),q_*(t))$ on $\llbrak 0,\varsigma \llbrak$. Since $\iota$ is isometric isomorphic, we can write $Y$ on $\llbrak 0,\varsigma\llbrak$ as
\begin{multline}
  \label{eq:Yweak}
  \scal{Y(t)}{\phi}{\L^2}  - \scal{Y(0)}{\phi}{\L^2} \\
  =  \int_0^t \scal{\cA Y(s)}{\phi}{\L^2} + \scal{\cB(Y(s))}{\phi}{\L^2} \d s + \int_0^t \scal{\cC(Y(s))\d \cW_s}{\phi}{\L^2},
\end{multline}
 for all $\phi = (\phi_1,\phi_2,\phi_3) \in C^{\infty}_0(\R_+) \times C^{\infty}_0(\R_+)\times \R$. By assumptions on $w$ and $q_*$ we get that $Y$ has paths in $L^2(0,\varsigma;\DA)$ and in $C([0,\varsigma[; \H^1)$ almost surely. We recall from Section~\ref{sec:centered}, that $\cB:\H^1\to \L^2$ and $\cC_1: \H^1\to \HS(U;\H^1)$ are Lipschitz continuous on bounded sets, which yields that $\cB\circ Y$ is $\L^2$-continuous and $\cC_1\circ Y$ is $\HS(U; \L^2)$-continuous. In particular, both are locally bounded, so that on $\llbrak 0,\varsigma\llbrak$,
\begin{equation*}
  \int_0^t \norm{\cA Y(s)}{\L^2} + \norm{\cB(Y(s))}{\L^2}  + \norm{\cC_1(Y(s))}{\HS(U;\L^2)}^2 \d s <\infty,
\end{equation*}
Due to global Lipschitz continuity of $\cC_2$ there exists $K>0$ such that on $\llbrak 0,\varsigma\llbrak$,
\[\int_0^t \norm{\cC_2(Y(s))}{\HS(\R;\L^2)}^2\d s \leq  K\int_0^t 1 + \norm{Y(s)}{\cA}^2\d s < \infty.\] 
Consequently, the terms involved are respectively Bochner and stochastically integrable on $\L^2$ and we can interchange the inner product of $\L^2$ with integration in~\eqref{eq:Yweak}. By density of $C^\infty_0(\R_+)$ in $L^2(\R_+)$, the equation holds for all $\phi \in \L^2$, and thus the strong integral equation holds true, i.\,e.
\[ Y(t) = Y(0) +  \int_0^t \cA Y(s) + \cB(Y(s)) \d s + \int_0^t \cC(Y(s))\d \cW_s,\quad\text{on } \llbrak 0,\varsigma\llbrak.\]
The uniqueness and maximality part of Theorem~\ref{thm:euSEE} yields that $\varsigma \leq \tau$ and $v= w$, $p_* = q_*$ on $\llbrak 0,\varsigma\llbrak$.\hfill $\qed$

\section{A Short Comment on Definition~\ref{df:fbp}}
\label{sec:motivation}
The main purpose of this discussion is to give, at least heuristically, a justification for the distribution valued terms which occur in the dynamics in Definition~\ref{df:fbp}. This is linked to the following question: Given the moving boundary problem in its description on each phases separately, does there exists a ``natural'' choice of distributional valued terms which vanish away from the interface? To be as simple as possible, we consider the following degenerate problem for $t> 0$,
\begin{gather}
  \label{eq:toy}
  \left\{\begin{split}
    \d v(t,x) &= 0, \qquad x\neq p_*(t), \\
    \ddx v(t,p_*(t)-) = \ddx v(t,p_*(t)+) &= 0,\\
    \d p_*(t) &= \sigma_* \d B_t,\\
    v_0(x) &= \1_{(0,\infty)}(x),\qquad x\neq 0,\\
    p_0 &= 0.
  \end{split}\right.
\end{gather}
The only meaningful solution of~\eqref{eq:toy} can be
\begin{equation}
  \label{eq:toysolution}
  v(t,x) = \1_{(p_*(t),\infty)} (x),\qquad t\geq 0,\; x\neq p_*(t).
\end{equation}
In fact, let $x \neq p_*(t)$, and consider the first hitting time
\[ \tau_x := \inf\{t\geq 0\,\vert\, p_*(t) = x\}.\]
Then, $t\mapsto v(t,x)$ has to be constant on $[0,\tau_x[$ and due to Neumann boundary conditions it is clear how to reiterate this procedure, starting at $\tau_x$ and considering $t\mapsto v(t,y)$ for $y\neq x$. 
On the other hand, let $\phi \in C^\infty_0(\R)$ and set $\Phi(x) := \int_{-\infty}^x \phi(y)\d y$. Then, by classical Ito-formula
\begin{align*}
  \scal{v(t,.)}{\phi}{}  -&\scal{v_0}{\phi}{}=\nonumber\\
                          &=  \Phi(0) - \Phi(p_*(t))  \label{eq:trafocomment} \\
                          &= - \int_0^t \phi(p_*(s)) \d p_*(s) -\frac12 \int_0^t \phi'(p_*(s)) \d\, [ p_* ](s) \nonumber \\   
                          &=-\int_0^t (v(s,p_*(s)+) - v(s,p_*(s)-)) \phi(p_*(s))\d p_*(s) \nonumber \\
                          &\qquad- \frac12 \int_0^t (v(s,p_*(s)+)- v(s,p_*(s)-)) \phi'(p_*(s)) \d\, [ p_* ](s) \nonumber \\
                          &=\int_0^t \scal{ L_1(v(s,.),p_*(s))}{ \phi}{}\d p_*(s) + \frac12 \int_0^t \scal{ L_2(v(s,.)}{p_*(s)}{} \d\, [ p_* ](s). \nonumber
\end{align*}
Recall that $\delta_p' \phi = - \phi'(p)$. Except from the
integrability conditions $v$ is indeed the solution of~\eqref{eq:toy}
in the sense of Definition~\ref{df:fbp}.

\begin{appendix}
\section{Abstract Setting}
\label{appendix}
We now briefly discuss existence and uniqueness results for stochastic evolution equations based on stochastic maximal $L^p$-regularity, which are due to van Neerven, Veraar and Weis~\cite{weisMaxRegEvEq}. 

On a separable Hilbert space $E$ we consider the stochastic evolution equation
\begin{equation}
  \label{eq:SEEabstract}
  \d X(t) = \left[A X(t) + B(X(t))\right] \d t + C(X(t))\d W_t,\; t\geq 0,
\end{equation}
with initial condition $X(0) = X_0$, where $W$ is a cylindrical Wiener process with covariance identity on another separable Hilbert space $U$. 
\begin{defn}\label{df:strongSEE}
  An $E$-predictable stochastic process $X$ is called local strong solution of~\eqref{eq:SEEabstract}, up to a predictable stopping time $\tau$, if $X(t)$ is $\dom(A)$-valued for a.\,a. $t>0$, and on $\llbrak 0,\tau\llbrak$
  \[ X(t) = X_0 + \int_0^t \left[A X(s) + B(X(s)) \right] \d s + \int_0^t C(X(s))\d W_s.\]
  In particular, all the integrals involved are assumed to exist on $E$, respectively as Bochner or stochastic integrals. 
\end{defn}
\begin{ass}
  \label{a:Aabst} $(-A,\dom(A))$ is densely defined and positive self-adjoint on $E$. 
\end{ass}
\begin{rmk}
  It is sufficient to assume that $-A$ has bounded $H^\infty$-calculus of angle $<\sfrac{\pi}2$. On Hilbert spaces, this is equivalent to the property that, after a possible change to an equivalent Hilbert space norm, $A$ generates an analytic $C_0$-semigroup of contractions~\cite[Section 7.3.3]{haase}.
\end{rmk}
For $\alpha\in \R$, we set 
\begin{equation}
  \label{eq:2}
  E_\alpha := \dom((-A)^\alpha), \qquad \norm{u}{\alpha} := \norm{(-A)^\alpha u}{E},\; u\in E_\alpha.
\end{equation}
Analogously, we denote the real interpolation spaces, for $1\leq p\leq \infty$, $\alpha\in (0,1)$, by
\[ E_{\alpha,p} := (E, \dom(A))_{\alpha,p},\]
and its respective norms by $\norm{.}{\alpha,p}$. Note that, since $A$ is negative self adjoint, for all $\alpha \in (0,1)$, cf. \cite[Thm 1.8.10]{triebel},
\begin{equation}
  \label{eq:complexreal}
  E_\alpha = E_{\alpha,2} = [E,\dom(A)]_\alpha,  
\end{equation}
with equivalence of norms, where the latter term denotes the complex interpolation space. Moreover, see~\cite[Thm 1.3.3]{triebel}, for $0<\alpha<\tilde \alpha<1$, $p$, $q\in [1,\infty)$, 
\begin{equation}
  \label{eq:realtheta12}
  E_1 \hookrightarrow E_{\tilde \alpha,p} \hookrightarrow E_{\alpha,q},
\end{equation}
and, if $1\leq p\leq q< \infty$, $\alpha\in (0,1)$, then
\begin{equation}
  \label{eq:realp12}
  E_{\alpha, p} \hookrightarrow E_{\alpha, q}.
\end{equation}
The density of the embeddings follows from \cite[Thm 1.6.2]{triebel}. If $q=\infty$, the embeddings are still continuous, but not dense, see~\cite[Rmk 1.18.3.]{triebel}.
\begin{example}
  Set $E:= L^2(\R^n)$, $A:= \Delta - \Id$ is the Laplacian with $\dom(A) := H^2(\R^n)$. Then, 
  \[ E_\alpha = H^{2\alpha}(\R^n)\quad\text{and}\quad E_{\alpha,p} = B^{2\alpha}_{2,p}(\R^n)\]
where $B^\alpha_{q,p}(\R)$ denote the Besov spaces for $\alpha\in (0,1)$, $1\leq p,q\leq \infty$, cf. Example 1.8 and 1.10 in~\cite{lunardiInterpol}.
\end{example}
\begin{ass}
  \label{a:Babst} $B =: B_1 + B_2$, where $B_1:  E_{1-\frac1p,p} \to E_{0}$ is Lipschitz continuous on bounded sets, and, there exists $L_{B}$ and $\tilde L_B$ such that for all $u$, $v\in E_1$,
  \[ \norm{B_2(u) - B_2(v)}{0} \leq L_B \norm{u-v}{1} + \tilde L_B \norm{u-v}{0}.\]
\end{ass}
\begin{ass}
  \label{a:Cabst} $C =: C_1 + C_2$, where $C_1:E_{1-\frac1p,p} \to \HS(U; E_{\sfrac12})$ is Lipschitz continuous on bounded sets and, there exists $L_{C}$ and $\tilde L_C$ such that for all $u$, $v\in E_{\sfrac12}$,
  \[ \norm{C_2(u) - C_2(v)}{\frac12} \leq L_C \norm{u-v}{1} + \tilde L_C \norm{u-v}{\frac12}.\]
\end{ass}
% for $\alpha_B$, $\alpha_C<1-\sfrac1p$ if $p>2$ or $\alpha_B = \alpha_C = \sfrac12$ for $p=2$ 
For the formulation of the existence theorem, denote by $M_p$ and $M_p^W$ the operator norms of
\[ g\mapsto \int_0^. S_{.-s} g(s) \d s,\qquad G\mapsto \int_0^. S_{.-s}G(s) \d W_s\]
as operators respectively from $L^p_{(\F_t)}(\R_+\times \Omega; E_0)$ into $L^p_{(\F_t)}(\R_+\times \Omega; E_1)$ and from $L^p_{(\F_t)}(\R_+\times \Omega; \HS(U; E_{\sfrac12}))$ into $L_{(\F_t)}^p(\R_+\times \Omega; E_1)$. Here, $L^p_{(\F_t)}(\R_+\times \Omega; E)$ denotes the $L^p$-space of all $\F_t$-adapted processes with values in $E$.
\begin{thm}
  \label{thm:eucSEEa}
  Let $p\geq 2$ and assume that Assumptions~\ref{a:Aabst}, \ref{a:Babst}, and~\ref{a:Cabst} hold true with
  \begin{equation}
    \label{eq:para}
    M_p L_B + M_p^W L_C < 1.    
  \end{equation}
  Then, for all $\F_0$ measurable initial data $X_0\in E_{1-\frac1p,p}$ there exists an unique maximal strong solution $(X,\tau)$ with values a.\,s. in
  \[L^p(0,\tau;E_1)\cap C([0,\tau); E_{1-\frac1p,p}).\]
  Moreover, almost everywhere on $\{\tau <\infty\}$,
  \[\lim_{t\nearrow \tau}\norm{X(t)}{1-\frac1p,p} = \infty.\]
\end{thm}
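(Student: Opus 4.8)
The plan is to reduce Theorem~\ref{thm:eucSEEa} to the linear theory of stochastic maximal $L^p$-regularity and then run a Banach fixed-point argument together with a truncation/localization to produce the maximal solution. The linear input, taken from \cite{weisMaxRegEvEq} in its Hilbert-space form, is the following: since $-A$ has bounded $H^\infty$-calculus of angle $<\tfrac{\pi}{2}$ (Assumption~\ref{a:Aabst}; cf.~\cite[Section 7.3.3]{haase}), $A$ has deterministic and stochastic maximal $L^p$-regularity, so $M_p$ and $M_p^W$ are finite, and for every $T>0$ the maps $g\mapsto\int_0^\cdot S_{\cdot-s}g(s)\d s$ and $G\mapsto\int_0^\cdot S_{\cdot-s}G(s)\d W_s$, together with $X_0\mapsto S_\cdot X_0$, additionally take values in $L^p_{(\F_t)}(\Omega;C([0,T];E_{1-\frac1p,p}))$; here one uses the trace embedding for the maximal-regularity space, whose time-zero trace space is $(E_0,E_1)_{1-\frac1p,p}=E_{1-\frac1p,p}$, and \eqref{eq:complexreal} to identify the interpolation scales.

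Next I would truncate the superlinear parts. Fix $n\in\N$, pick a Lipschitz cut-off $\chi_n:E_{1-\frac1p,p}\to E_{1-\frac1p,p}$ equal to the identity on the ball of radius $n$ and vanishing outside the ball of radius $n+1$, and set $B_1^{(n)}:=B_1\circ\chi_n$, $C_1^{(n)}:=C_1\circ\chi_n$, which are globally Lipschitz (indeed bounded) with some constant $\ell_n$. On $\cE_T:=L^p_{(\F_t)}((0,T)\times\Omega;E_1)\cap L^p_{(\F_t)}(\Omega;C([0,T];E_{1-\frac1p,p}))$ define $\Lambda_n(Y)$ as the (unique, by the linear theory) solution of $\d X=[AX+B_1^{(n)}(Y)+B_2(Y)]\d t+[C_1^{(n)}(Y)+C_2(Y)]\d W$ with $X(0)=X_0$. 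The maximal-regularity bounds, together with Assumptions~\ref{a:Babst} and~\ref{a:Cabst}, give
\[
\norm{\Lambda_n(Y_1)-\Lambda_n(Y_2)}{\cE_T}\le\bigl(M_pL_B+M_p^WL_C+c_n\,(T^{1/p}+T^{1/2})\bigr)\,\norm{Y_1-Y_2}{\cE_T},
\]
where $c_n$ absorbs $\tilde L_B$, $\tilde L_C$, $\ell_n$: the point is that the $B_2$-, $C_2$-contributions measured in the highest norm produce exactly the constant of~\eqref{eq:para}, whereas the lower-order terms and all of $B_1^{(n)}$, $C_1^{(n)}$ are controlled by $\norm{Y_1-Y_2}{C([0,T];E_0)}$ or $\norm{Y_1-Y_2}{C([0,T];E_{1-\frac1p,p})}$ and hence come with a positive power of $T$. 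By~\eqref{eq:para} there is $T_0=T_0(\ell_n)>0$, independent of $X_0$, making $\Lambda_n$ a strict contraction on $\cE_{T_0}$; its fixed point solves the $n$-truncated equation on $[0,T_0]$, and since $T_0$ does not depend on the initial value one iterates over the intervals $[kT_0,(k+1)T_0]$ to get a global solution $X^{(n)}$ on $\R_+$ with a.s. paths in $L^p_{loc}(\R_+;E_1)\cap C(\R_+;E_{1-\frac1p,p})$. (If $X_0$ is merely $\F_0$-measurable, first localize on the sets $\{\norm{X_0}{1-\frac1p,p}\le m\}$ and patch using uniqueness.)

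Finally I would pass to the maximal solution and derive the blow-up alternative. Let $\tau_n:=\inf\{t\ge0:\norm{X^{(n)}(t)}{1-\frac1p,p}\ge n\}$ (with $\inf\emptyset=\infty$); on $\llbrak0,\tau_n\llbrak$ the cut-off in $X^{(n)}$ is inactive, so $X^{(n)}$ solves the original equation there. Comparing $X^{(n)}$ and $X^{(m)}$ for $m\ge n$ on the $n$-truncated equation and using its uniqueness, one finds $X^{(n)}=X^{(m)}$ up to the exit time from the ball of radius $n$, whence $(\tau_n)$ is nondecreasing. Set $\tau:=\sup_n\tau_n$ and $X:=X^{(n)}$ on $\llbrak0,\tau_n\llbrak$; this is a local strong solution, and uniqueness and maximality among all such solutions follow by comparing any competitor with the $X^{(n)}$ up to exit times from balls and letting $n\to\infty$. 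For the explosion claim, suppose that on a set of positive probability $\tau<\infty$ and $\sup_{t<\tau}\norm{X(t)}{1-\frac1p,p}\le R$ for some finite $R$; then $X$ solves the $(R+1)$-truncated equation on $\llbrak0,\tau\llbrak$, which by the previous step has a global solution $\tilde X\in C(\R_+;E_{1-\frac1p,p})$ with $\tilde X=X$ on $\llbrak0,\tau\llbrak$, so $\tilde X$ extends $X$ continuously past $\tau$ with $\norm{\tilde X(\tau)}{1-\frac1p,p}\le R$, and hence $\tilde X$ in fact solves the \emph{original} equation on a strictly larger interval — contradicting the maximality of $\tau$. Therefore $\lim_{t\nearrow\tau}\norm{X(t)}{1-\frac1p,p}=\infty$ a.e.\ on $\{\tau<\infty\}$.

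The only genuinely deep ingredient is the linear stochastic maximal $L^p$-regularity estimate (finiteness of $M_p^W$ and the accompanying trace embedding into $C([0,T];E_{1-\frac1p,p})$), which is the content of \cite{weisMaxRegEvEq} and is used here as a black box. Granting it, the main obstacle is bookkeeping: organizing the space $\cE_T$ and the splittings $B=B_1+B_2$, $C=C_1+C_2$ so that $B_2$, $C_2$ contribute \emph{precisely} the constant $M_pL_B+M_p^WL_C$ with no stray factor while everything else is absorbed into $T^{1/p}$ (and $T^{1/2}$) gains, and then ensuring the local existence time $T_0$ is independent of the initial data — this independence is exactly what makes the step-by-step continuation, and hence the clean explosion criterion, go through. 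A more economical route, if one is willing to quote more, is to check that Assumptions~\ref{a:Aabst}--\ref{a:Cabst} together with~\eqref{eq:para} are a special case of the hypotheses of~\cite[Thm 3.5]{weisMaxRegEvEq} on Hilbert spaces and invoke that result directly.
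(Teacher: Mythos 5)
Your overall strategy --- linear stochastic maximal $L^p$-regularity as a black box, truncation of the locally Lipschitz parts, a contraction whose leading constant is exactly $M_pL_B+M_p^WL_C$, and continuation via exit times --- is essentially the proof of \cite[Thm 5.6]{weisMaxRegEvEq}, which the paper does not reproduce: it only checks that on a Hilbert space the hypotheses of that theorem follow from Assumptions~\ref{a:Aabst}--\ref{a:Cabst} and~\eqref{eq:para} and then quotes it (your closing ``more economical route'' is the paper's actual argument). One point you defer as ``bookkeeping'' deserves more than that: the norms of the two convolution operators acting into the $C([0,T];E_{1-\frac1p,p})$-component of $\cE_T$ are not $M_p$ and $M_p^W$, so with the naive norm on $\cE_T$ condition~\eqref{eq:para} does not directly yield a contraction constant below $1$. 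The standard fixes (contract only in $L^p_{(\F_t)}((0,T)\times\Omega;E_1)$ and recover the continuous trajectories a posteriori, or weight the $C$-component of the norm by a small parameter) work, but this is precisely where~\eqref{eq:para} enters and should be spelled out.

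The genuine gap is the final inference. For each $R$ you exclude the event $\{\tau<\infty,\ \sup_{t<\tau}\norm{X(t)}{1-\frac1p,p}\le R\}$; letting $R\to\infty$ this gives only $\limsup_{t\nearrow\tau}\norm{X(t)}{1-\frac1p,p}=\infty$ a.s.\ on $\{\tau<\infty\}$, not the asserted limit. The event you must also rule out is $\{\tau<\infty,\ \liminf_{t\nearrow\tau}\norm{X(t)}{1-\frac1p,p}\le R\}$, on which $X$ returns to the ball of radius $R$ along a sequence of times accumulating at $\tau$ while still leaving every ball; there your extension argument does not apply, because $X$ does not solve any single truncated equation on all of $\llbrak 0,\tau\llbrak$ and hence does not extend continuously past $\tau$ by the global theory for the truncated problem. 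Upgrading $\limsup$ to $\lim$ needs an additional restart argument along the successive return times to a fixed ball, which is part of what \cite[Definition 5.5 and Theorem 5.6]{weisMaxRegEvEq} packages and what the paper imports wholesale. As written, your proof establishes a strictly weaker blow-up criterion than the one claimed in Theorem~\ref{thm:eucSEEa}.
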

\begin{rmk}\label{rmk:para}
  It is known that that $M_2 \leq 1$, $M_2^W \leq \sfrac{1}{\sqrt 2}$, but in general $M_p$ and $M_p^W$ are not explicitly known for $p>2$. However, in some cases, like when $L_B = 0$ and $C_2$ is the generator of a unitary group then the theorem still holds true provided that $L_C <\sqrt{2}$, see \cite{veraarPara} for a detailed discussion of condition~\eqref{eq:para}. 
\end{rmk}

Because the results in \cite{weisMaxRegEvEq} are formulated on Banach spaces, where some additional difficulties occur, we shortly summarize the the arguments and results from the reference: Since $E$ is a separable Hilbert space, we get from Theorem 2.5, Remark 4.1.(v) and the discussions in section 5.2 and 5.3 in~\cite{weisMaxRegEvEq}, that in the situation of Theorem~\ref{thm:eucSEEa} all of the assumptions of the existence result \cite[Theorem 5.6]{weisMaxRegEvEq} are fulfilled. This gives the unique maximal mild solution $X$, which is also an analytically strong solution, see Proposition 4.4 and its proof in~\cite{weisMaxRegEvEq}.\\
By definition of maximal local solutions in \cite[Definition 5.5]{weisMaxRegEvEq}, we get
\[\lim_{t\nearrow \tau} \norm{X(t)}{E_{1-\frac1p,\frac1p}} = \infty,\qquad \text{on }\{\tau<\infty\}.\]

The corresponding result for global existence and additional regularity is extracted from part (ii) and (iii) of Theorem 5.6 in~\cite{weisMaxRegEvEq}.
\begin{thm}\label{thm:globalSEEabst}
  Assume that for $p\geq 2$ the conditions of Theorem~\ref{thm:eucSEEa} and linear growths assumptions on $B_1$ and $C_1$ are satisfied. Namely, there exists a $M>0$ such that 
  \begin{align*}
    \norm{B_1(u)}{0} &\leq M \left(1 + \norm{u}{1-\frac1p,p}\right),\\
    \norm{C_1(u)}{\HS(U;E_{\sfrac12})} &\leq M\left(1 + \norm{u}{1-\frac1p,p}\right), \qquad \forall u\in E_{1-\frac1p,p}.
  \end{align*}
  Let $(X,\tau)$ be the unique maximal solution of~\eqref{eq:SEEabstract}. Then, it holds that $\tau = \infty$ almost surely. If, moreover, $X_0 \in L^p(\Omega; E_{1-\frac1p,p})$, then, for all $T>0$ there exists a constant $K>0$ such that
  \begin{equation}
    \label{eq:18}
    \EE\int_0^T \norm{X(s)}{1}^p \d s + \E{\sup_{0\leq t\leq T} \norm{X(s)}{1-\frac1p,p}^p} \leq K \left(1 + \E{\norm{X_0}{1-\frac1p,p}^p}\right).
  \end{equation}
\end{thm}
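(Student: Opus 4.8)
The plan is to obtain Theorem~\ref{thm:globalSEEabst} as the Hilbert-space specialization of parts~(ii) and~(iii) of \cite[Theorem~5.6]{weisMaxRegEvEq}, exactly as Theorem~\ref{thm:eucSEEa} was obtained from part~(i) of the same result; the work is almost entirely one of translation. First I would recall that, under Assumptions~\ref{a:Aabst}--\ref{a:Cabst} together with the parabolicity bound \eqref{eq:para}, the discussion preceding Theorem~\ref{thm:eucSEEa} already verifies all structural hypotheses of \cite[Theorem~5.6]{weisMaxRegEvEq}: here $E$ is a UMD space of type $2$, $-A$ has bounded $H^\infty$-calculus of angle $<\sfrac{\pi}2$ (cf. the remark following Assumption~\ref{a:Aabst}), and the trace space $E_{1-\frac1p,p} = (E,\dom(A))_{1-\frac1p,p}$ coincides with the critical space used in the reference, by \eqref{eq:complexreal} and \eqref{eq:realtheta12}. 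Thus the unique maximal strong solution $(X,\tau)$ produced there is precisely the one in the statement, and it suffices to feed in the extra linear-growth information.

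For non-explosion I would invoke part~(ii). The assumed bounds $\norm{B_1(u)}{0}\le M(1+\norm{u}{1-\frac1p,p})$ and $\norm{C_1(u)}{\HS(U;E_{\sfrac12})}\le M(1+\norm{u}{1-\frac1p,p})$ are exactly the growth conditions imposed there on the parts mapping out of the trace space; the remaining parts $B_2$ and $C_2$ have affine growth in the $E_1$- resp. $E_{\sfrac12}$-norm (set $v=0$ in the Lipschitz estimates of Assumptions~\ref{a:Babst} and~\ref{a:Cabst}), which is the admissible form of growth for the principal parts, with coupling constants already controlled by \eqref{eq:para}. Part~(ii) then yields $\tau=\infty$ almost surely.

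For the a priori estimate I would invoke part~(iii): when $X_0\in L^p(\Omega;E_{1-\frac1p,p})$, the reference provides the stochastic maximal $L^p$-regularity bound
\[ \norm{X}{L^p(\Omega;L^p(0,T;E_1))}^p + \EE\sup_{0\le t\le T}\norm{X(t)}{1-\frac1p,p}^p \le K\bigl(1 + \E{\norm{X_0}{1-\frac1p,p}^p}\bigr),\]
with $K = K(T,p,M,L_B,\tilde L_B,L_C,\tilde L_C,M_p,M_p^W)$ uniform in $X_0$, which is \eqref{eq:18}. The argument underlying part~(iii) is a fixed point in the maximal-regularity space, made contractive by the gap $1-(M_pL_B+M_p^WL_C)>0$, followed by a Gronwall-type absorption of the linear-growth terms; I would cite rather than reproduce it.

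I do not expect a genuine obstacle here. The only delicate point is bookkeeping: one must check that our split $B=B_1+B_2$, $C=C_1+C_2$ — with $B_1$ mapping the trace space into $E_0$ and $C_1$ the trace space into $\HS(U;E_{\sfrac12})$, the $B_2,C_2$ estimates against the $E_1$- resp. $E_{\sfrac12}$-norm — matches the hypotheses of \cite[Theorem~5.6]{weisMaxRegEvEq} verbatim, and that no $R$-boundedness, type or cotype hypothesis of the Banach-space version is lost in passing to the Hilbert space $E$ (it is not; all such conditions are automatic here). The one place where constants genuinely matter is \eqref{eq:para}, but that is already a standing assumption, so nothing new is required; see also Remark~\ref{rmk:para} and \cite{veraarPara}.
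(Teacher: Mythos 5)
Your proposal is correct and follows exactly the paper's route: the paper itself disposes of this theorem by noting that the structural hypotheses of \cite[Theorem~5.6]{weisMaxRegEvEq} were already verified in the discussion preceding Theorem~\ref{thm:eucSEEa}, and then extracting global existence and the a priori bound \eqref{eq:18} from parts~(ii) and~(iii) of that result. Your additional bookkeeping (affine growth of $B_2$, $C_2$ by setting $v=0$ in the Lipschitz estimates, and the automatic Hilbert-space geometry) is consistent with what the paper leaves implicit.
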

\begin{rmk}\label{rmk:maximaility}
  The assumptions on the statements are indeed optimal in the following sense. Under the constraints of Assumption~\ref{a:Babst} the results cover even classes of fully-nonlinear equations. For the noise term, da Prato et al.~\cite{dPKZ} have shown that for an negative self-adjoint operator $A$, an element $b\in E$ and a real Brownian  motion $\beta$, there exists a strong solution of 
  \[\d X(t) = A X(t)\d t+ b \d \beta(t),\]
  if and only if $b\in E_{\sfrac12}$, see \cite[Thm. 6]{dPKZ}. For linear equations with multiplicative noise Brzezniak and Veraar~\cite{veraarPara} have discussed ill-posedness when~\eqref{eq:para} is violated.
\end{rmk}
\end{appendix}

\section*{Acknowledgements}
The author also would like to thank Martin Keller-Ressel and Wilhelm Stannat for comments and discussions and is grateful to Mark Veraar for pointing out the issue of Remark~\ref{rmk:para}.

%-------------------------------
%\nocite*
%\bibliographystyle{plainnat}
\bibliographystyle{imsart-number}
\bibliography{AAP1359}
%-------------------------------
\end{document}